\theoremstyle{plain}
\newtheorem{lemma}{Lemma}[section]
\newtheorem{definition}[lemma]{Definition}
\newtheorem{proposition}[lemma]{Proposition}
\newtheorem{corollary}[lemma]{Corollary}
\newtheorem{theorem}[lemma]{Theorem}
\newtheorem{remark}[lemma]{Remark}
\newtheorem*{ack}{Acknowledgements}
\newcommand{\Lie}[1]{\operatorname{\textsl{#1}}}
\newcommand{\Gtwo}{\ifmmode{{\rm G}_2}\else{${\rm G}_2$}\fi}
\def\sideremark#1{\ifvmode\leavevmode\fi\vadjust{\vbox to0pt{\vss
 \hbox to 0pt{\hskip\hsize\hskip1em
 \vbox{\hsize2.5cm\tiny\raggedright\pretolerance10000
 \noindent #1\hfill}\hss}\vbox to8pt{\vfil}\vss}}}%
\newfont{\eusm}{eusm10 scaled \magstep1}
\newfont{\eusmiii}{eusm10 scaled \magstep3}
\newcommand{\comp}{\makebox[7pt]{\raisebox{1.5pt}{\tiny $\circ$}}}
\newcommand{\RR}{\mbox{{\sl I}}\!\mbox{{\sl R}}}
\newcommand{\trace}{\mathop{\rm trace}}
\title[Existence of non-isotropic conjugate points]{Existence of non-isotropic conjugate points on rank one normal homogeneous spaces}
\author{J.~C.~Gonz{\'a}lez-D{\'a}vila}
\address[J.~C.~Gonz{\'a}lez-D{\'a}vila]{Department of Fundamental Mathematics\\
  University of La Laguna\\ 38200 La Laguna, Tenerife, Spain}
\email{jcgonza@ull.es}
\author{A.~M.~Naveira}
\address[A.~M.~Naveira]{Department of Geometry and Topology\\
  University of Valencia\\ 46100 Burjassot, Valencia, Spain}
\email{naveira@uv.es}
\date{\today}
\begin{document}
\maketitle

\begin{abstract}{\indent}
We give a positive answer to the Chavel's conjecture [J. Diff. Geom. 4 (1970), 13-20]: a simply connected rank one normal homogeneous space is symmetric if any pair of conjugate points are isotropic. It implies that all simply connected rank one normal homogeneous space with the property that the isotropy action is variational complete is a rank one symmetric space.
\vspace{4mm}

\noindent {\footnotesize \emph{Keywords and phrases:} Jacobi field, isotropically conjugate point, strictly isotropic conjugate point, normal homogeneous space, variational complete action.} \vspace{2mm}

\noindent {\footnotesize \emph{2000 MSC}: 53C22, 53C30, 53C20.}
\end{abstract}


\section{Introduction}\indent

The Jacobi equation for geodesics on a symmetric space has simple
solutions and one can directly show that every Jacobi field
vanishing at two points is the restriction of a Killing vector field
along the geodesic (see for example \cite{GS}). A Jacobi field $V$
on a Riemannian manifold $(M,g)$ which is the
restriction of a Killing vector field along a geodesic is called
{\em isotropic} \cite{Z}. For $(M,g)$ homogeneous Riemannian manifold, it means that $V$ is the restriction of
an infinitesimal motion of elements in the Lie algebra of the
isometry group $I(M,g)$ of $(M,g).$ Moreover, if $V$ vanishes at a
point $o$ of the geodesic then it is obtained as restriction of an
infinitesimal $\Lie{K}$-motion, being $\Lie{K}$ the isotropy subgroup of
$I(M,g)$ at $o\in M.$ This particular situation was what
originally motivated the term ``isotropic'' (see \cite{Ch} and
\cite{Ch1}).

Two points $p,q\in M$ are said to be {\em isotropically conjugate}
if there exists a non-zero isotropic Jacobi field $V$ along a
geodesic passing through $p$ and $q$ such that $V$ vanishes at
these points. When every Jacobi field vanishing at
$p$ and $q$ is isotropic, we say that they are {\em strictly
isotropic conjugate points.} Then {\em any pair of conjugate points in a Riemannian
symmetric space are strictly isotropic}.

 In the case of a
naturally reductive space, the adapted {\em canonical} connection has
the same geodesics and the Jacobi equation can be also written as
a differential equation with constant coefficients (equation
(\ref{Jam1})). Using this fact, Chavel in \cite{Ch} and \cite{Ch1} proved that the Berger spaces $B^{7}$ and $B^{13}$ admit conjugate points at which no isotropic Jacobi field vanishes. Such spaces are normal homogeneous of rank one, or equivalently, they have positive sectional curvature (see Lemma \ref{lrank}),  Moreover, in \cite{Ch2}, after studding conjugate points on odd-dimensional Berger spheres, he proposed the following conjecture:

{\em If every conjugate point of a simply connected normal homogeneous Riemannian manifold $\Lie{G}/\Lie{K}$ of rank one is isotropic, then $\Lie{G}/\Lie{K}$ is isometric to a Riemannian symmetric space of rank one.}

Here, our main purpose is to prove this conjecture. For it we develop a general theory about the existence of conjugate points which are not isotropic or not strictly isotropic along any geodesic on non-symmetric normal homogeneous spaces (see Theorem \ref{cp}).

The notion of {\em variationally complete action} was introduced by Bott and Samelson in \cite{Bott}. They proved that the {\em isotropy action} on a symmetric space of compact type is variationally complete. In \cite{GD} the first named author has proved that if the isotropy action of $K$ on $M = G/K$ is variationally complete then all Jacobi field vanishing at two points is $G$-isotropic. Then, the above conjecture implies the following:

 {\em If the isotropy action on a simply connected rank one normal homogeneous space is variationally complete then it is a compact rank one symmetric space.}

 Berger \cite{Be} has classified the homogeneous spaces $\Lie{G}/\Lie{K}$ which admit a normal $\Lie{G}$-invariant Riemannian metric with strictly positive sectional curvature. He states that a simply connected, normal homogeneous space of positive curvature is homeomorphic, in fact
diffeomorphic (see \cite[Proposition 4.3, Ch. II]{He}), to a
sphere $S^{n}$ or one of the projective spaces ${\mathbb C}P^{n},$
${\mathbb H}P^{n}$ or ${\mathbb C}aP^{2},$ with two exceptions: $B^{7}:= \Lie{Sp}(2)/\Lie{SU}(2)$  and $B^{13}:= \Lie{SU}(5)/\Lie{H},$ where $\Lie{H}$ is a Lie group isomorphic to $(\Lie{Sp}(2)\times
S^{1})/{\pm(id,1)}.$ The corresponding $\Lie{Sp}(2)$- and $\Lie{SU}(5)$-standard Riemannian metrics on $B^{7}$ and $B^{13}$ have positive sectional curvature. Moreover, the first one is, up to homotheties, the unique $Sp(2)$-invariant Riemannian metric on $B^{7},$ since it is isotropy irreducible.

 Wilking gives in \cite{Wil} a new quotient expression $W^{7}:= (\Lie{SO}(3)\times \Lie{SU}(3))/\Lie{U}^{\bullet}(2)$ for the positively curved seven-dimensional Aloff-Wallach spaces
$M^{7}_{11}$ \cite{AW}, equipped with a one-parameter family of
bi-invariant metrics on $\Lie{SO}(3)\times \Lie{SU}(3),$ turning them into normal homogeneous spaces.
Then, these spaces become a third exception in the Berger's
list.

It is worthwhile to note that the above classification is under
diffeomorphims and not under isometries. Then, in order to give a proof of the Conjecture, we shall need the following Riemannian classification for this class of spaces, based on results already known (see \cite{Wa}, \cite{Z} and \cite{Z1}) and where $\delta$ denotes the corresponding {\em pinching} constant.

\begin{theorem}\label{tmean2} A simply connected, normal homogeneous space of positive curvature is isometric to one of the following Riemannian spaces:
\begin{enumerate}
\item[{\rm (i)}] compact rank one symmetric spaces with their standard metrics: $S^{n}$ $(\delta =1);$ ${\mathbb
C}P^{n},$ ${\mathbb H}P^{n},$ ${\mathbb C}aP^{2}$ $(\delta =\frac{1}{4});$
\item[{\rm (ii)}] the complex projective space
${\mathbb C}P^{n} =
\Lie{Sp}(m+1)/(\Lie{Sp}(m)\times \Lie{U}(1)),$ $n = 2m+1,$ equipped with a standard
$\Lie{Sp}(m+1)$-homogeneous metric $(\delta = \frac{1}{16}).$
\item[{\rm (iii)}] the Berger spheres $(S^{2m+1} = \Lie{SU}(m+1)/\Lie{SU}(m),g_{s}),$ $0<s\leq 1,$ $(\delta(s) = \frac{s(m+1)}{8m - 3s(m+1)}).$
\item[{\rm (iv)}] $(S^{4m+3}= \Lie{Sp}(m+1)/\Lie{Sp}(m),g_{s}),$ $0<s\leq 1,$ $(\delta(s) = \frac{s}{8-3s},$ if $s\geq \frac{2}{3},$ and $\delta(s) = \frac{s^{2}}{4},$ if $s< \frac{2}{3}).$
\item[{\rm (v)}] $B^{7}= \Lie{Sp}(2)/\Lie{SU}(2)$ equipped with a standard $\Lie{Sp}(2)$-homogeneous metric.
\item[{\rm (vi)}] $B^{13}= \Lie{SU}(5)/\Lie{H}$ equipped with a standard $\Lie{SU}(5)$-homogeneous metric.
\item[{\rm (vii)}] $W^{7}= (\Lie{SO}(3)\times \Lie{SU}(3))/\Lie{U}^{\bullet}(2),$ equipped with a one-parameter family of $\Lie{SO}(3)\times \Lie{SU}(3)$-homogeneous metrics.
\end{enumerate}
\end{theorem}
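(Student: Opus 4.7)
The plan is to promote the known diffeomorphism classification of simply connected positively curved normal homogeneous spaces to a classification up to isometry. By Berger, augmented by Wilking's presentation $W^{7}=(\Lie{SO}(3)\times\Lie{SU}(3))/\Lie{U}^{\bullet}(2)$, every such space is diffeomorphic to one of $S^{n}$, $\mathbb{C}P^{n}$, $\mathbb{H}P^{n}$, $\mathbb{C}aP^{2}$, $B^{7}$, $B^{13}$ or $W^{7}$. It therefore suffices, on each of these underlying manifolds, to enumerate the transitive compact Lie-group presentations $\Lie{G}/\Lie{K}$ and, for each, to describe the family of $\Ad$-invariant inner products on $\lie{g}$ whose associated normal metric has strictly positive sectional curvature.

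For the rank-one symmetric underlying manifolds (cases (i)--(iv)), I would invoke the classical classification of transitive compact Lie group actions on spheres and on complex projective spaces. The standard presentations yield the compact rank-one symmetric spaces with their canonical metrics, giving case (i). The remaining transitive actions that produce positively curved normal metrics are precisely $\Lie{SU}(m+1)/\Lie{SU}(m)$ and $\Lie{Sp}(m+1)/\Lie{Sp}(m)$ on odd-dimensional spheres, and $\Lie{Sp}(m+1)/(\Lie{Sp}(m)\times\Lie{U}(1))$ on $\mathbb{C}P^{2m+1}$. For each such pair $(\Lie{G},\Lie{K})$ one writes $\lie{g}=\lie{k}\oplus\lie{m}$ and deforms the $\Ad$-invariant inner product along the centraliser of $\lie{k}$ in $\lie{g}$; applying the normal-homogeneous sectional curvature formula then pins down the one-parameter family $g_{s}$ and the sharp range $0<s\le 1$ for positive curvature, as worked out in \cite{Wa}, \cite{Z}, \cite{Z1}. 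This yields cases (ii)--(iv).

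The exceptional Berger and Wilking underlying manifolds I would treat case-by-case. Since $B^{7}=\Lie{Sp}(2)/\Lie{SU}(2)$ is isotropy irreducible, every $\Lie{Sp}(2)$-invariant Riemannian metric is a positive multiple of the standard one, which proves (v). For $B^{13}=\Lie{SU}(5)/\Lie{H}$, Wallach's analysis \cite{Wa} identifies the standard $\Lie{SU}(5)$-metric as the unique normal positively curved $\Lie{SU}(5)$-invariant metric, giving (vi). For $W^{7}$ one takes the one-parameter family of bi-invariant metrics on $\Lie{SO}(3)\times\Lie{SU}(3)$ isolated in \cite{Wil}, giving (vii). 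The pinching constants $\delta$ are then extracted by substituting a judiciously chosen orthonormal two-plane at the origin into the sectional curvature formula and extremising over the unit sphere in $\lie{m}$; this is a direct computation, case-by-case, already carried out in the references cited.

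The main obstacle, and really the only nontrivial point, is the completeness of the enumeration: one needs certainty that no additional transitive compact pair $(\Lie{G},\Lie{K})$ on any of the listed manifolds has been overlooked and that the stated parameter intervals are precisely the positive-curvature locus inside the full family of $\Ad$-invariant inner products. Both concerns are settled by combining the classical list of transitive actions with the pinching computations of \cite{Wa}, \cite{Z}, \cite{Z1}; no essentially new computation is required, and the contribution of the theorem is to assemble the existing isometric information into a single statement ready for application to Chavel's conjecture.
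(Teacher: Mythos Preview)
Your approach is essentially that of the paper: start from the Berger--Wilking diffeomorphism classification and then, on each underlying manifold, determine exactly which invariant metrics are normal and have positive curvature. The paper organises this into Propositions~\ref{mean1} and~\ref{pmean2} (even dimensions via Wallach, spheres via the Montgomery--Samelson--Borel list) together with Lemma~\ref{lB13} for $B^{13}$.

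Two points deserve correction. First, your attribution for $B^{13}$ is wrong: Wallach's paper \cite{Wa} treats only even-dimensional positively curved homogeneous spaces and says nothing about $B^{13}$. The uniqueness of the normal $\Lie{SU}(5)$-metric on $B^{13}$ is precisely the content of Lemma~\ref{lB13} in the present paper, proved by a two-line bracket computation showing that any $\Ad(\Lie{SU}(5))$-invariant extension of $\langle\cdot,\cdot\rangle_{s}$ forces $s=1$. Second, your description of the range $0<s\le 1$ as ``the sharp range for positive curvature'' misidentifies the mechanism: the constraint $s\le 1$ is exactly the condition for $g_{s}$ to be \emph{normal} (with respect to $\Lie{SU}(m+1)\times\Lie{U}(1)$ or $\Lie{Sp}(m+1)\times\Lie{Sp}(1)$ for $s<1$, and the smaller group for $s=1$), and one then checks that every such normal metric has positive curvature. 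The curvature formula is used to compute $\delta(s)$, not to cut down the parameter range.
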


 Eliasson \cite{Eli} and Heintze \cite{Hei} computed the pinching constants $\frac{1}{37}$ and $\frac{16}{29\cdot37}$ of $B^{7}$ and $B^{13}$ equipped with these normal homogeneous metrics. The pinching constant for any $SU(5)$-invariant metric on $B^{13}$ is obtained by P\"uttmann \cite{Put}. Moreover, he proves that the corresponding optimal pinching constant in $B^{13}$ and also in $W^{7}$ is $\frac{1}{37}.$

Up to the Berger space $B^{7},$ any non-symmetric rank one normal homogeneous space determines a homogeneous Riemannian fibration over a compact rank one symmetric space. This fact, together with the property that the isotropy action on the unit horizontal tangent sphere at the origin is transitive,  simplifies substantially the problem of determining conjugate points along any geodesic in theses spaces. It allows us to show in Theorems \ref{conj}, \ref{conjB13} and \ref{conjW7} the existence of conjugate points to the origin along any horizontal geodesic starting at this point which are not isotropic. Moreover, for normal homogeneous spaces of type (ii), (iii) and (iv) in Theorem \ref{tmean2} we prove that any geodesic admits conjugate points which are not strictly isotropic. Partial results are also given by geodesics in the Berger space $B^{13}$ and the Wilking example $W^{7}.$

\vspace{0.2cm}

\begin{ack} {\rm Supported by D.G.I. (Spain) and FEDER Project MTM2010-15444. Also the second author has been partially supported by the Generalitat Valenciana Project Prometeo 2009/099.}
\end{ack}

\section{Normal homogeneous spaces and isotropic Jacobi fields}\indent

Let $(M,g)$ be a connected homogeneous {\em Riemannian} manifold.
Then $(M,g)$ can be expressed as coset space $\Lie{G}/\Lie{K},$ where $\Lie{G}$ is a
Lie group, which is supposed to be connected, acting transitively
on $M,$ $\Lie{K}$ is the isotropy subgroup of $\Lie{G}$ at
some point $o\in M,$ the {\em origin} of $\Lie{G}/\Lie{K},$ and $g$ is a
$\Lie{G}$-invariant Riemannian metric. Moreover, we can assume that
$\Lie{G}/\Lie{K}$ is a {\em reductive homogeneous space}, i.e., there is an
${\mathrm A}{\mathrm d}(\Lie{K})$-invariant subspace ${\mathfrak m}$ of the Lie algebra ${\mathfrak
g}$ of $\Lie{G}$ such that ${\mathfrak g} = {\mathfrak k} \oplus {\mathfrak m},$
${\mathfrak k}$ being the Lie algebra of $\Lie{K}.$ $(M=\Lie{G}/\Lie{K},g)$ is said to
be {\em naturally reductive}, or more precisely $\Lie{G}$-{\em naturally
reductive}, if there exists a reductive decomposition ${\mathfrak g} = {\mathfrak k} \oplus
{\mathfrak m}$ satisfying
\begin{equation}\label{nred}
\langle[X,Y]_{\mathfrak m},Z\rangle + \langle [X,Z]_{\mathfrak m},Y\rangle = 0
\end{equation}
\noindent for all $X,Y,Z\in {\mathfrak m},$ where $[X,Y]_{\mathfrak m}$
denotes the ${\mathfrak m}$-component of $[X,Y]$ and $\langle\cdot,\cdot\rangle$ is the ${\mathrm A}{\mathrm d}({\Lie K})$-invariant inner product induced by $g$ on ${\mathfrak m},$ by using the canonical
identification ${\mathfrak m}\cong T_{o}M.$ When there exists an ${\mathrm A}{\mathrm d}(\Lie{G})$-invariant inner product on ${\mathfrak g},$ which we also denote by $\langle\cdot,\cdot\rangle,$ whose restriction to
${\mathfrak m} = {\mathfrak k}^{\bot}$ is $\langle\cdot,\cdot\rangle,$ the
space $(M=\Lie{G}/\Lie{K},g)$ is called {\em normal
homogeneous}. Then, for all $X,Y,Z\in {\mathfrak g},$ we have
\begin{equation}\label{tt2}
\langle [X,Y],Z\rangle + \langle [X,Z],Y\rangle = 0.
\end{equation}
Hence each normal homogeneous space is naturally reductive. A $\Lie{G}$-homogeneous Riemannian manifold is called {\em standard} if it is normal and $-\langle\cdot,\cdot\rangle$ is the {\em Killing}-{\em Cartan form} of ${\mathfrak g}.$ If $\Lie{G}$ is a simple compact Lie group, any naturally reductive $\Lie{G}$-homogeneous
Riemannian manifold is standard, up to scaling factor. Moreover, the unique $\Lie{G}$-invariant
 Riemannian metric, up to homotheties, on a compact isotropy irreducible space $M = \Lie{G}/\Lie{K}$
 is standard choosing the appropriate scaling factor and this Riemannian metric is Einstein.
Notice that not all standard homogeneous metrics are Einstein and not all normal homogeneous metrics are standard (see \cite[Ch. 7]{B} for more details).

Let $\tilde{T}$ denote the torsion tensor and $\tilde{R}$
the corresponding curvature tensor of the {\em canonical
connection} $\tilde{\nabla}$ of $(M,g)$ adapted to the reductive
decomposition ${\mathfrak g} = {\mathfrak m} \oplus {\mathfrak k}$
\cite[II, p. 190]{KN} defined by the sign convention $\tilde{R}(X,Y)
= \tilde{\nabla}_{[X,Y]} -
[\tilde{\nabla}_{X},\tilde{\nabla}_{Y}]$ and $\tilde{T}(X,Y) =
\tilde{\nabla}_{X}Y - \tilde{\nabla}_{Y}X - [X,Y],$ for all
$X,Y\in {\mathfrak X}(M),$ the Lie algebra of smooth vector fields
on $M.$ Then, these tensor fields at the origin are given by
\begin{equation}\label{torsion}
\tilde{T}_{o}(X,Y) = -[X,Y]_{\mathfrak m} \;\;\;, \;\;\;
\tilde{R}_{o}(X,Y) = {\rm ad}_{[X,Y]_{\mathfrak k}}
\end{equation}
and we have $\tilde{\nabla}g = \tilde{\nabla}\tilde{T} =
\tilde{\nabla}\tilde{R} = 0.$ On naturally reductive homogeneous
manifolds $(M=\Lie{G}/\Lie{K},g),$ the tensor field $S = \nabla -
\tilde{\nabla},$ where $\nabla$ denotes the Levi Civita connection
of $(M,g),$ is a {\em homogeneous structure} \cite{TV} satisfying
$S_{X}Y =-S_{Y}X = -\frac{1}{2}\tilde{T}(X,Y),$ for all $X, Y\in
{\mathfrak X}(M),$ and we get
\begin{equation}\label{RR}
\tilde{R}_{XY} = R_{XY} + [S_{X},S_{Y}] - 2S_{S_{X}Y}.
\end{equation}
From (\ref{torsion}) and
(\ref{RR}), the Riemannian curvature $R$ on a naturally reductive manifold satisfies $<R_{XY}X,Y> = <[[X,Y]_{\mathfrak k},X]_{\mathfrak m},Y> +
\frac{\textstyle 1}{\textstyle 4}\|[X,Y]_{\mathfrak m}\|^{2}$ and, moreover if it is normal, then
\begin{equation}\label{curnor}
<R_{XY}X,Y> = \|[X,Y]_{\mathfrak k}\|^{2} + \frac{\textstyle
1}{\textstyle 4}\|[X,Y]_{\mathfrak m}\|^{2},
\end{equation}
for all $X,Y\in {\mathfrak m}\cong T_{o}M.$ So, the sectional
curvature of a normal homogeneous manifold is always non-negative
and there exists a section $\pi=\RR\{X,Y\},$ $X,Y\in {\mathfrak
m},$ such that $K(\pi) = 0$ if and only if $[X,Y] = 0.$

The notion of Lie triple system given in the theory of symmetric spaces to construct totally geodesic submanifolds can be extended to naturally reductive spaces in the following way \cite{S}.
\begin{definition}{\rm Let $(M = \Lie{G}/\Lie{K},g)$ be a naturally reductive homogeneous
manifold with adapted reductive decomposition ${\mathfrak g} = {\mathfrak
m}\oplus {\mathfrak k}.$  A subspace $\nu$ of ${\mathfrak m}$
satisfying $[\nu,\nu]_{\mathfrak m} \subset {\nu}$ and $[[\nu,\nu]_{\mathfrak
k},\nu]\subset \nu$ is said to be a} Lie triple system {\rm (L.t.s.) of }${\mathfrak m}.$
\end{definition}

\begin{lemma} If $\nu\subset {\mathfrak m}$ is a L.t.s., then ${\mathfrak g}_{\nu} = \nu \oplus [\nu,\nu]_{\mathfrak k}$
is a Lie subalgebra of ${\mathfrak g}.$
\end{lemma}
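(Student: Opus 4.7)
The plan is to verify that $\mathfrak{g}_\nu = \nu \oplus [\nu,\nu]_{\mathfrak{k}}$ is closed under the Lie bracket, by checking the three types of pairs: $[\nu,\nu]$, $[\nu,[\nu,\nu]_{\mathfrak{k}}]$, and $[[\nu,\nu]_{\mathfrak{k}},[\nu,\nu]_{\mathfrak{k}}]$. The first two are immediate from the defining conditions of a Lie triple system: writing $[X,Y]=[X,Y]_{\mathfrak{m}}+[X,Y]_{\mathfrak{k}}$ for $X,Y\in\nu$ shows $[\nu,\nu]\subset\nu+[\nu,\nu]_{\mathfrak{k}}=\mathfrak{g}_\nu$, and the hypothesis $[[\nu,\nu]_{\mathfrak{k}},\nu]\subset\nu$ handles the mixed term.

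The only non-trivial step is showing that $[[\nu,\nu]_{\mathfrak{k}},[\nu,\nu]_{\mathfrak{k}}]\subset[\nu,\nu]_{\mathfrak{k}}$. Here I would use the Jacobi identity: take $A=[X_1,Y_1]_{\mathfrak{k}}$ and pick $X_2,Y_2\in\nu$, so that $B:=[X_2,Y_2]_{\mathfrak{k}}=[X_2,Y_2]-[X_2,Y_2]_{\mathfrak{m}}$. Then
\begin{equation*}
[A,B]=[A,[X_2,Y_2]]-[A,[X_2,Y_2]_{\mathfrak{m}}]=[[A,X_2],Y_2]+[X_2,[A,Y_2]]-[A,[X_2,Y_2]_{\mathfrak{m}}].
\end{equation*}
By the L.t.s.\ hypothesis, $[A,X_2]$ and $[A,Y_2]$ lie in $\nu$, so the first two terms lie in $[\nu,\nu]\subset\nu\oplus[\nu,\nu]_{\mathfrak{k}}$. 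The third term satisfies $[A,[X_2,Y_2]_{\mathfrak{m}}]\in[\mathfrak{k},\mathfrak{m}]\subset\mathfrak{m}$ by reductivity, so it has no $\mathfrak{k}$-component.

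The conclusion is then extracted by projecting onto $\mathfrak{k}$: since $A,B\in\mathfrak{k}$ one has $[A,B]\in\mathfrak{k}$, so $[A,B]$ equals its own $\mathfrak{k}$-component, which by the computation above is the $\mathfrak{k}$-part of $[[A,X_2],Y_2]+[X_2,[A,Y_2]]$, hence an element of $[\nu,\nu]_{\mathfrak{k}}$. Bilinearity extends this to arbitrary elements of $[\nu,\nu]_{\mathfrak{k}}$, completing the proof. The only potential pitfall is keeping track of $\mathfrak{m}$- versus $\mathfrak{k}$-components carefully when applying Jacobi, which is precisely why the reductivity relation $[\mathfrak{k},\mathfrak{m}]\subset\mathfrak{m}$ must be invoked to kill the third term on the $\mathfrak{k}$-side.
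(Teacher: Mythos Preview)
Your proof is correct and follows essentially the same route as the paper's: both use the Jacobi identity together with reductivity $[\mathfrak{k},\mathfrak{m}]\subset\mathfrak{m}$ to show that $[[\nu,\nu]_{\mathfrak{k}},[\nu,\nu]_{\mathfrak{k}}]\subset[\nu,\nu]_{\mathfrak{k}}$, with the L.t.s.\ hypothesis ensuring that the intermediate brackets $[A,X_2],[A,Y_2]$ land back in $\nu$. The paper packages the same computation as the identity $[[X,Y]_{\mathfrak k},U]=[[X,U],Y]_{\mathfrak k}+[X,[Y,U]]_{\mathfrak k}$ for $U\in\mathfrak{k}$, but the content is identical.
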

\begin{proof} First, we need to check the following equality
\begin{equation}\label{corLts}
[[X,Y]_{\mathfrak k},[Z,W]_{\mathfrak k}] = [[X,[Z,W]_{\mathfrak
k}],Y]_{\mathfrak k} + [X,[Y,[Z,W]_{\mathfrak k}]_{\mathfrak k},
\end{equation}
for all $X,Y,Z,W\in {\mathfrak m}.$ Since ${\mathfrak m}$ is
$Ad(K)$-invariant, one obtains $[[X,Y]_{\mathfrak k},U] = [[X,Y],U]_{\mathfrak k},$ for all $X,Y\in {\mathfrak m}$ and $U\in {\mathfrak k}.$ Then,
using the Jacobi identity, $[[X,Y]_{\mathfrak k},U] = [[X,U],Y]_{\mathfrak k} +
[X,[Y,U]]_{\mathfrak k}.$ From here, putting $U = [Z,W]_{\mathfrak k},$ we obtain
(\ref{corLts}). Now, taking into account that $\nu$ is a L.t.s.,
$[\nu,\nu]_{\mathfrak k}$ is subalgebra of
${\mathfrak k}$ and the result is immediate.
\end{proof}
Denote $\Lie{G}_{\nu}$ the connected Lie subgroup of $\Lie{G}$ with Lie algebra
${\mathfrak g}_{\nu}.$ Then, in similar way than for symmetric spaces (see \cite[Theorem
7.2, Ch. IV]{He}), one obtains the following result, based on the fact of that geodesics of $M$
through $o$ are of type $(\exp tu)o,$ $u\in {\mathfrak m},$ and the corresponding totally geodesic submanifolds $M_{\nu}$
can be expressed as the orbit $\Lie{G}_{\nu}\cdot o$ (see \cite{S}).

\begin{proposition} Let $(M = \Lie{G}/\Lie{K},g)$ be a naturally reductive homogeneous
manifold with adap\-ted reductive decomposition ${\mathfrak g} =
{\mathfrak m} \oplus {\mathfrak k}$ and let $\nu\subset {\mathfrak m}$ be a
L.t.s. Then there exists a {\rm (}unique{\rm )} complete and
connected totally geodesic submanifold $M_{\nu}$ through $o$ such that $T_{o}M_{\nu} = \nu.$ Moreover, $M_{\nu}$ is the naturally
reductive homogeneous manifold $(M_{\nu}=\Lie{G}_{\nu}/(\Lie{G}_{\nu}\cap \Lie{K}), i^{*}g).$
\end{proposition}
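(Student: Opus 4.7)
The plan is to realize $M_\nu$ as the orbit $\Lie{G}_\nu\cdot o$, where $\Lie{G}_\nu$ is the connected Lie subgroup supplied by the preceding lemma, and then to verify in turn the tangency, the naturally reductive structure, the totally geodesic property, completeness and uniqueness. Setting $M_\nu := \Lie{G}_\nu\cdot o$, the stabilizer of $o$ in $\Lie{G}_\nu$ is $\Lie{G}_\nu\cap\Lie{K}$, so $M_\nu = \Lie{G}_\nu/(\Lie{G}_\nu\cap\Lie{K})$ with induced metric $i^{*}g$; the Lie algebra of the isotropy is $\mathfrak{g}_\nu\cap\mathfrak{k} = [\nu,\nu]_{\mathfrak{k}}$, so under the identification $\mathfrak{m}\cong T_{o}M$ one obtains $T_{o}M_\nu=\nu$.

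Next I check that $(M_\nu, i^{*}g)$ is naturally reductive with decomposition $\mathfrak{g}_\nu = [\nu,\nu]_{\mathfrak{k}}\oplus\nu$. The $\mathrm{Ad}(\Lie{G}_\nu\cap\Lie{K})$-invariance of $\nu$ is the one point that requires a small argument: for $k\in\Lie{G}_\nu\cap\Lie{K}$ we may intersect the two standard invariances $\mathrm{Ad}(k)\mathfrak{g}_\nu=\mathfrak{g}_\nu$ (which holds for any $k$ in a Lie subgroup) and $\mathrm{Ad}(k)\mathfrak{m}=\mathfrak{m}$ (which holds because $k\in\Lie{K}$), obtaining $\mathrm{Ad}(k)\nu \subset \mathfrak{g}_\nu\cap\mathfrak{m}=\nu$. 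The identity (\ref{nred}) for $M_\nu$ reads $\langle [X,Y]_\nu,Z\rangle + \langle [X,Z]_\nu,Y\rangle = 0$ for $X,Y,Z\in\nu$, where $[\,\cdot\,,\,\cdot\,]_\nu$ is the $\nu$-component in $\mathfrak{g}_\nu$. The L.t.s.\ condition $[\nu,\nu]_{\mathfrak{m}}\subset\nu$ forces $[X,Y]_\nu = [X,Y]_{\mathfrak{m}}$, so the identity follows directly from the natural reductivity of $(M,g)$.

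Since $(M,g)$ is naturally reductive, its geodesics at $o$ are $\gamma_u(t)=(\exp tu)\cdot o$ with $u\in\mathfrak{m}$. For $u\in\nu\subset\mathfrak{g}_\nu$ one has $\exp(tu)\in\Lie{G}_\nu$, hence $\gamma_u(\mathbb{R})\subset M_\nu$; thus every geodesic of $M$ issuing from $o$ in a direction tangent to $M_\nu$ remains in $M_\nu$ for all time. Because $\Lie{G}_\nu$ acts transitively on $M_\nu$ by restrictions of isometries of $M$, this persists at every point of $M_\nu$, proving that $M_\nu$ is totally geodesic and complete, and that its own geodesics coincide with the ambient ones. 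Uniqueness is then standard: any complete connected totally geodesic submanifold through $o$ with tangent space $\nu$ at $o$ must contain every $\gamma_u$ for $u\in\nu$, and the union of these geodesics together with the action of $\Lie{G}_\nu$ already exhausts $M_\nu$.

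The only mildly delicate point is the $\mathrm{Ad}$-invariance check when $\Lie{G}_\nu\cap\Lie{K}$ is possibly disconnected, which is dispatched by the two-invariance intersection argument above; all other steps reduce directly to the L.t.s.\ axioms and to the naturally reductive structure of the ambient space, in perfect parallel with the classical symmetric-space argument.
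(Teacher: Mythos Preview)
Your proof is correct and follows exactly the approach the paper indicates: realize $M_\nu$ as the orbit $\Lie{G}_\nu\cdot o$ and exploit that geodesics through $o$ are of the form $(\exp tu)o$ for $u\in\mathfrak{m}$. The paper itself gives no more than this sketch, referring to Helgason's symmetric-space argument and to Sagle for the details, so your write-up is in fact more complete than what appears in the paper; in particular your verification of the $\mathrm{Ad}(\Lie{G}_\nu\cap\Lie{K})$-invariance of $\nu$ via the intersection $\mathfrak{g}_\nu\cap\mathfrak{m}$ and your check that $[X,Y]_\nu=[X,Y]_{\mathfrak m}$ are exactly the points one has to supply.
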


\begin{remark}{\rm We are also interested on $M_{\nu}$ as a closed embedded
submanifold of $M.$ These conditions are satisfied when $\Lie{G}_{\nu}$
is a topological subgroup of $\Lie{G}$ and $\Lie{K}$ is compact (see \cite[Proposition 4.4, Ch. II]{He}).}
\end{remark}

From (\ref{curnor}), a flat totally geodesic submanifold of a normal homogeneous space $(M,g)$ is characterized by the property
that the corresponding Lie triple system is an abelian algebra. Defining {\em rank} of $(M,g)$ as the maximal dimension of a flat, totally geodesic submanifold, one directly
obtains
\begin{lemma}\label{lrank} The following statements are equivalent:
\begin{enumerate}
\item[{\rm (i)}] $(M,g)$ has positive sectional curvature;
\item[{\rm (ii)}] $[X,Y]\neq 0$ for all linearly independent $X,Y\in {\mathfrak m};$
\item[{\rm (iii)}] ${\rm rank}\;(M,g) = 1.$
\end{enumerate}
\end{lemma}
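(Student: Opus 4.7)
The plan is to use the normal-homogeneous curvature formula (\ref{curnor}) together with the characterization, already noted just above the lemma, that a flat totally geodesic submanifold of $(M,g)$ is exactly one whose tangent space at $o$ is an \emph{abelian} Lie triple system.

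For (i)$\Leftrightarrow$(ii): from (\ref{curnor}), the unnormalized sectional curvature of a $2$-plane $\pi=\RR\{X,Y\}$ at $o$, with $X,Y\in\mathfrak{m}$ linearly independent, equals $\|[X,Y]_{\mathfrak{k}}\|^{2}+\tfrac14\|[X,Y]_{\mathfrak{m}}\|^{2}$, a sum of two non-negative terms that vanishes iff $[X,Y]=0$. By $\Lie{G}$-invariance of $g$, every tangent plane at every point of $M$ is $\Lie{G}$-equivalent to a plane at $o$, so positivity of the sectional curvature everywhere is equivalent to condition (ii).

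For (ii)$\Rightarrow$(iii): each one-dimensional subspace $\RR X\subset\mathfrak{m}$ is trivially an abelian L.t.s. and yields a one-dimensional flat totally geodesic submanifold, namely the geodesic $(\exp tX)\cdot o$; hence $\mathrm{rank}(M,g)\geq 1$. If some flat totally geodesic submanifold through $o$ had dimension $\geq 2$, its tangent space would be an abelian L.t.s. of that dimension, so any pair of linearly independent vectors in it would commute, contradicting (ii). For (iii)$\Rightarrow$(ii): suppose, for contradiction, that $[X,Y]=0$ for some linearly independent $X,Y\in\mathfrak{m}$. Then $\nu=\RR\{X,Y\}$ is abelian, and both defining conditions of a Lie triple system hold trivially since $[\nu,\nu]=0$. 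By the preceding Proposition there is a connected totally geodesic submanifold $M_{\nu}\subset M$ with $T_{o}M_{\nu}=\nu$, and the characterization via (\ref{curnor}) shows $M_{\nu}$ is flat, so $\mathrm{rank}(M,g)\geq 2$, a contradiction.

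Given the preparatory material, the proof is essentially a bookkeeping exercise: the main work has already been done in deriving (\ref{curnor}) and in passing from abelian L.t.s.\ to flat totally geodesic submanifolds via the Proposition. I do not anticipate a serious obstacle; the only point to note is that in (iii)$\Rightarrow$(ii) one only needs the existence of a local $2$-dimensional flat submanifold through $o$, which is immediate from the Proposition.
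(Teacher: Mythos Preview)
Your proof is correct and is exactly the argument the paper intends: the paper does not actually write out a proof but simply says ``one directly obtains'' the lemma after noting that, by (\ref{curnor}), flat totally geodesic submanifolds correspond to abelian Lie triple systems. Your write-up is a faithful unpacking of that one-line remark, using (\ref{curnor}) for (i)$\Leftrightarrow$(ii) and the L.t.s./totally geodesic correspondence for the equivalence with (iii).
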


 For each $X\in {\mathfrak g},$ the mapping $\psi:\RR \times M \to
M,$ $(t,p)\in \RR\times M \mapsto \psi_{t}(p) = (\exp tX)p,$ is a
one-parameter group of isometries on $(M = \Lie{G}/\Lie{K},g)$ and consequently, $\psi$ induces
a Killing vector field $X^{*}$ given by $X^{*}_{p} = \frac{d}{dt}_{\mid t = 0}(\exp
tX)p.$ $X^*$ is called the {\em fundamental vector field} or the {\em
infinitesimal $\Lie{G}$-motion} corresponding to $X$ on $M.$ For any $a\in \Lie{G},$ we have
\begin{equation}\label{adjun1}
({\mathrm A}{\mathrm d}_{a}X)^{*}_{ap} = a_{*p}X^{*}_{p},
\end{equation}
where $a_{*p}$ denotes the differential map of $a$ at $p\in M.$

\begin{definition}{\rm A Jacobi field $V$ along a geodesic
$\gamma$ in $(M = \Lie{G}/\Lie{K},g)$ is said to be} $\Lie{G}$-isotropic {\rm if
there exists $X\in {\mathfrak g}$ such that $V = X^{*}\comp \gamma.$ }
\end{definition}
\noindent If $\Lie{G}$ is the identity connected component $I_{o}(M,g)$ of the isometry group $I(M,g)$ of $(M,g),$ then all (complete) Killing vector field on $M$ is a fundamental vector field $X^{*},$ for some $X\in
{\mathfrak g},$ and we simply say that $V$ is an {\em isotropic}
Jacobi field. Obviously, any $\Lie{G}$-isotropic Jacobi field is
isotropic but the converse does not satisfy in general \cite{GS}.

From the homogeneity of $M=\Lie{G}/\Lie{K},$ we shall only need to consider
geodesics $\gamma$ emanating from the origin $o\in M.$ In what follows, $\gamma_{u}$ will denote the unit-speed geodesic starting at $o$ with $\gamma_{u}'(0) = u\in {\mathfrak m},$ $\|u\|=1.$ Then a Jacobi field
$V$ along $\gamma_{u}$ with $V(0) = 0$ is $G$-isotropic if and only if
there exists $A\in {\mathfrak k}$ such that $V = A^{*}\comp \gamma_{u},$
or equivalently, if there exists an $A\in {\mathfrak k}$ such that \cite{GS}
\begin{equation}\label{ini}
(V(0),V'(0)) = (0,[A,u]).
\end{equation}

The linear isotropy representation, i.e. the differential of the action of $\Lie{K}$ on $T_{o}M,$ corresponds via the natural isomorphism $\pi_{*}:{\mathfrak m}\to T_{o}M$ with the Adjoint representation ${\mathrm A}{\mathrm d}(\Lie{K})$ of $\Lie{K}$ on ${\mathfrak m}.$ From (\ref{adjun1}) and since $k\comp \gamma_{u} =
\gamma_{k\cdot u},$ for any $k\in \Lie{K},$ where $k\cdot u = {\mathrm Ad}_{k}u,$  we can state:

\begin{lemma}\label{isotropic} If $V$ is an {\rm (}isotropic{\rm )} Jacobi field along $\gamma_{u}$, then $k_{*\gamma_{u}}V$ is an
{\rm (}isotropic{\rm )} Jacobi field along $\gamma_{k\cdot u}.$ Moreover, if $p = \gamma_{u}(t)$ is a {\rm ((}strictly{\rm )} isotropic{\rm
)} conjugate point to the origin along $\gamma_{u},$ then $k(p) =
\gamma_{k\cdot u}(t)$ is a {\rm ((}strictly{\rm )} isotropic{\rm )}
conjugate point to the origin along $\gamma_{k\cdot u}.$
\end{lemma}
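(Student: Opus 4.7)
The plan is to use the fact that every $k\in \Lie{K}$ is an isometry fixing $o,$ combined with the transformation rule (\ref{adjun1}) for fundamental vector fields and the identity $k\circ\gamma_{u} = \gamma_{k\cdot u}$ already noted before the statement of the lemma. The argument is essentially a transport-by-isometry argument, so I do not expect any serious obstacle; the only care needed is to check that the various notions of ``isotropic'' are preserved under the pushforward, and that the strictly isotropic case (which is a statement about \emph{all} Jacobi fields) really does follow.

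First I would observe that since $k$ is an isometry and $k\circ \gamma_{u} = \gamma_{k\cdot u},$ the pushforward $W(s) := k_{*\gamma_{u}(s)}V(s)$ is a Jacobi field along $\gamma_{k\cdot u},$ with $W(0) = k_{*o}V(0)$ and $W(t) = k_{*\gamma_{u}(t)}V(t).$ In particular $W$ vanishes at $s=0$ (resp.\ $s=t$) if and only if $V$ does, since $k_{*}$ is a linear isomorphism on each tangent space. This already proves that conjugate points are mapped to conjugate points.

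Next I would show that the isotropy property is preserved. If $V = X^{*}\comp \gamma_{u}$ for some $X\in {\mathfrak g},$ then, evaluating (\ref{adjun1}) at $p = \gamma_{u}(s)$ and using $k\circ\gamma_{u} = \gamma_{k\cdot u},$
\begin{equation*}
k_{*\gamma_{u}(s)}V(s) = k_{*\gamma_{u}(s)}X^{*}_{\gamma_{u}(s)} = ({\mathrm A}{\mathrm d}_{k}X)^{*}_{\gamma_{k\cdot u}(s)},
\end{equation*}
so $W = ({\mathrm A}{\mathrm d}_{k}X)^{*}\comp \gamma_{k\cdot u}$ is $\Lie{G}$-isotropic. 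Applied with $X\in {\mathfrak k}$ (and using ${\mathrm A}{\mathrm d}_{k}{\mathfrak k}\subset {\mathfrak k}$) this also preserves the sub-class of isotropic fields that vanish at the origin, so the second clause for isotropic conjugate points follows.

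Finally, for the strictly isotropic case, suppose every Jacobi field along $\gamma_{u}$ vanishing at $o$ and $p$ is isotropic. Let $\widetilde W$ be any Jacobi field along $\gamma_{k\cdot u}$ vanishing at $o$ and $k(p).$ Applying the first part of the lemma to $k^{-1}\in \Lie{K},$ the pullback $V := (k^{-1})_{*}\widetilde W$ is a Jacobi field along $\gamma_{k^{-1}\cdot (k\cdot u)} = \gamma_{u}$ vanishing at $o$ and at $p = k^{-1}(k(p)).$ By hypothesis $V$ is isotropic, hence by the isotropy-preserving statement just proved, $\widetilde W = k_{*}V$ is isotropic along $\gamma_{k\cdot u},$ as required.
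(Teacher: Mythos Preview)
Your proof is correct and follows exactly the approach the paper intends: in the paper the lemma is stated without a formal proof, as an immediate consequence of the identity $k\comp\gamma_{u}=\gamma_{k\cdot u}$ and the transformation rule (\ref{adjun1}), which is precisely what you spell out. Your added treatment of the strictly isotropic case via $k^{-1}$ is the natural completion of this argument.
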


On naturally reductive spaces, the connections $\nabla$ and $\tilde{\nabla}$ have the same geodesics and,
consequently, the same Jacobi fields (see \cite{Z}). Such
geodesics can be written as $\gamma_{u}(t) = (\exp tu)o$ and the Jacobi equation for $\nabla$
coincides with the Jacobi equation for $\tilde{\nabla},$
\[
\frac{\tilde{\nabla}^{2}V}{dt^{2}} -
\tilde{T}_{\gamma_{u}}\frac{\tilde{\nabla}V}{dt} + \tilde{R}_{\gamma_{u}}V
= 0,
\]
where $\tilde{R}_{\gamma_{u}} = \tilde{R}(\gamma'_{u},\cdot)\gamma'_{u}$ and
$\tilde{T}_{\gamma_{u}} = \tilde{T}(\gamma'_{u},\cdot).$ Taking into
account that $\tilde{\nabla}\tilde{T} = \tilde{\nabla}\tilde{R} =
0$ and the parallel translation with respect to $\tilde{\nabla}$
of tangent vectors at the origin along $\gamma_{u}$ coincides with
the differential of ${\exp}tu \in \Lie{G}$ acting on $M,$ it follows
that any Jacobi field $V$ along $\gamma_{u}(t)$ can be expressed as
$V(t) = (\exp tu)_{*o}X(t)$ where $X(t)$ is solution of the
differential equation
\begin{equation}\label{Jam1}
X''(t) - \tilde{T}_{u}X'(t) + \tilde{R}_{u}X(t) = 0
\end{equation}
in the vector space ${\mathfrak m},$ being $\tilde{T}_{u}X =
\tilde{T}(u,X) = -[u,X]_{\mathfrak m}$ and $\tilde{R}_{u}X =
\tilde{R}(u,X)u = [[u,X]_{\mathfrak k},u]$ (see \cite{GS},
\cite{Z} for more details).

The following characterization for isotropic Jacobi fields on normal
homogeneous spaces is so useful.
\begin{lemma}{\rm \cite{GD}}\label{inf} A Jacobi field $V$ along
$\gamma_{u}(t) = (\exp tu)o$ on a normal homogeneous space $(M
=\Lie{G}/\Lie{K},g)$ with $V(0) = 0$ is $\Lie{G}$-isotropic if and only if $V'(0)\in
({\rm Ker}\;\tilde{R}_{u})^{\bot}.$
\end{lemma}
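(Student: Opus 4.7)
The plan is to reduce the statement to a purely Lie-algebraic identity about the operator $\tilde{R}_u$ on $\mathfrak{m}$, and then use the ${\rm Ad}(\Lie{G})$-invariance of the normal inner product to identify its kernel.

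First I would handle the easy direction and reformulate the isotropic side. A $\Lie{G}$-isotropic Jacobi field $V$ with $V(0)=0$ is of the form $V = X^{*}\circ\gamma_{u}$ with $X^{*}_{o}=0$, which forces $X \in \mathfrak{k}$. Under the canonical connection, the identity (\ref{ini}) then says that the set of possible initial velocities $V'(0)$ is exactly $[\mathfrak{k},u] \subset \mathfrak{m}$. Conversely, if a Jacobi field $V$ satisfies $V(0)=0$ and $V'(0)=[A,u]$ for some $A\in\mathfrak{k}$, then by uniqueness of solutions of (\ref{Jam1}) with given initial data, $V$ coincides with $A^{*}\circ\gamma_{u}$. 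So the lemma reduces to proving the identity
\[
[\mathfrak{k},u] \;=\; (\operatorname{Ker}\tilde{R}_{u})^{\bot}
\]
inside $\mathfrak{m}$.

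Next I would compute a convenient symmetric bilinear form for $\tilde{R}_{u}$. Starting from $\tilde{R}_{u}X=[[u,X]_{\mathfrak{k}},u]$ and repeatedly applying the ${\rm ad}$-invariance (\ref{tt2}) together with the orthogonality $\mathfrak{k}\bot\mathfrak{m}$, I expect to obtain
\[
\langle \tilde{R}_{u}X,Y\rangle \;=\; \langle [u,X]_{\mathfrak{k}},[u,Y]_{\mathfrak{k}}\rangle
\]
for all $X,Y\in\mathfrak{m}$. This shows that $\tilde{R}_{u}$ is symmetric and positive semidefinite, hence $\operatorname{Ker}\tilde{R}_{u}=\{X\in\mathfrak{m}:[u,X]_{\mathfrak{k}}=0\}$.

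Finally, I would identify this kernel with $[\mathfrak{k},u]^{\bot}$ by another application of (\ref{tt2}): for $X\in\mathfrak{m}$ and $A\in\mathfrak{k}$,
\[
\langle [u,X]_{\mathfrak{k}},A\rangle \;=\; \langle [u,X],A\rangle \;=\; -\langle X,[u,A]\rangle \;=\; \langle X,[A,u]\rangle,
\]
so $[u,X]_{\mathfrak{k}}=0$ iff $X$ is orthogonal to every $[A,u]\in [\mathfrak{k},u]$. Combining this with the previous step gives $(\operatorname{Ker}\tilde{R}_{u})^{\bot}=[\mathfrak{k},u]$, which closes the argument. The only place requiring care is the symmetrization computation yielding the positive semidefinite form, since one has to be vigilant about signs and about keeping the various brackets in $\mathfrak{k}$ versus $\mathfrak{m}$; everything else follows from (\ref{tt2}) and the identification of isotropic initial data via (\ref{ini}).
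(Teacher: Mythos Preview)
Your argument is correct. The reduction to showing $[\mathfrak{k},u]=(\operatorname{Ker}\tilde{R}_{u})^{\bot}$ via (\ref{ini}) is exactly the right move, and both subsequent computations go through: using (\ref{tt2}) one indeed obtains $\langle\tilde{R}_{u}X,Y\rangle=\langle[u,X]_{\mathfrak{k}},[u,Y]_{\mathfrak{k}}\rangle$, whence $\operatorname{Ker}\tilde{R}_{u}=\{X\in\mathfrak{m}:[u,X]_{\mathfrak{k}}=0\}$, and the final duality $\langle[u,X]_{\mathfrak{k}},A\rangle=\langle X,[A,u]\rangle$ for $A\in\mathfrak{k}$ identifies this kernel with $[\mathfrak{k},u]^{\bot}$.

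Note, however, that there is nothing in the paper to compare your proof against: the lemma is quoted from \cite{GD} and is not proved here. The paper only uses it (and the self-adjointness of $\tilde{R}_{u}$, which is a byproduct of your computation) as a black box in the proof of Theorem~\ref{cp}. Your write-up is precisely the natural proof one would expect for this statement and is consistent with how the result is invoked in the paper.
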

We end this section establishing a more complete version of \cite[Proposition 2.3]{GD}. This is a key result for the development of this article.
\begin{theorem}\label{cp} Let $(M = \Lie{G}/\Lie{K},g)$ be a normal homogeneous space and let $u
,v$ be orthonormal vectors in ${\mathfrak m}$ verifying $[[u,v],u]_{\mathfrak m} = \lambda v,$ for some $\lambda >0.$
\begin{enumerate}
\item[{\rm (i)}] If $[u,v]_{\mathfrak m} = 0,$ then $\gamma_{u}(\frac{p\pi}{\sqrt{\lambda}}),$ for $p\in {\mathbb Z},$ are $\Lie{G}$-isotropically conjugate points to the origin.
\item[{\rm (ii)}] If $[u,v]\in {\mathfrak m}\setminus \{0\}$ and $[[u,[u,v]]_{\mathfrak k},u] =\rho[u,v],$ then $\rho = \frac{1}{\lambda}\|[[u,v],u]_{\mathfrak k}\|^{2}$ and the following cases hold:
\begin{enumerate}
\item[{\rm (A)}] If $\rho =0,$ i.e. $[[u,v],u]\in {\mathfrak m},$ $\gamma_{u}(\frac{2p\pi}{\sqrt{\lambda}}),$ $p\in {\mathbb Z},$ are conjugate points to the origin but not strictly $G$-isotropic.
\item[{\rm (B)}] If $\rho >0,$ $\gamma_{u}(\frac{s}{\sqrt{\lambda + \rho}}),$ where
\begin{enumerate}
\item[{\rm 1.}] $s$ is a solution of the equation $\tan
\frac{s}{2} = -\frac{\rho s}{2\lambda},$ or
\item[{\rm 2.}] $s = 2p\pi,$ $p\in {\mathbb Z},$
\end{enumerate}
are conjugate points to the origin along $\gamma_{u}(t) = (\exp tu)o.$ In the first case, they are not strictly $\Lie{G}$-isotropic and in the second one, they are $G$-isotropic.
\end{enumerate}
\end{enumerate}
\end{theorem}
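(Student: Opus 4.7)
The plan is to reduce everything to solving the canonical-connection Jacobi equation (\ref{Jam1}) on a small $\tilde T_u$- and $\tilde R_u$-invariant subspace of $\mathfrak m$, read off the zeros of the explicit solutions, and then decide $G$-isotropy by applying Lemma~\ref{inf}. Before splitting into cases I would record two consequences of the ad-invariance relation (\ref{tt2}): the orthogonalities $\langle[u,v],v\rangle=\langle[u,v],u\rangle=0$, and the identity $\langle\tilde R_u w,w\rangle=\|[u,w]_{\mathfrak k}\|^{2}$ valid for every $w\in\mathfrak m$. Applied with $w=[u,v]$, and combined with the ad-invariance computation $\langle[[u,v],u],v\rangle=\|[u,v]\|^{2}$, these identities give, in case (ii), both $\|[u,v]\|^{2}=\lambda$ and the claimed formula $\rho=\lambda^{-1}\|[[u,v],u]_{\mathfrak k}\|^{2}$.

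For case (i) the hypothesis $[u,v]_{\mathfrak m}=0$ gives $\tilde T_u v=0$ and $\tilde R_u v=[[u,v],u]=\lambda v$, so the ansatz $X(t)=f(t)\,v$ reduces (\ref{Jam1}) to $f''+\lambda f=0$ with $f(0)=0$. The zeros $t=p\pi/\sqrt{\lambda}$ furnish the stated conjugate points, and since $v$ is an eigenvector of the symmetric operator $\tilde R_u$ with nonzero eigenvalue one has $v\in(\ker\tilde R_u)^{\perp}$, whence Lemma~\ref{inf} certifies that the corresponding Jacobi field is $G$-isotropic.

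For case (ii) the subspace $W:=\mathrm{span}\{v,[u,v]\}$ is invariant under both operators: $\tilde T_u v=-[u,v]$, $\tilde R_u v=0$, $\tilde T_u[u,v]=\lambda v$ and $\tilde R_u[u,v]=\rho[u,v]$. With the ansatz $X(t)=f(t)v+g(t)[u,v]$, equation (\ref{Jam1}) becomes the coupled constant-coefficient system
\[
f''-\lambda g'=0,\qquad g''+f'+\rho g=0,\qquad f(0)=g(0)=0.
\]
A single integration yields $f'=\lambda g+f'(0)$ and then $g''+(\lambda+\rho)g=-f'(0)$; setting $\omega=\sqrt{\lambda+\rho}$ and $s=\omega t_0$, a short computation shows that the determinant of the linear map $(f'(0),g'(0))\mapsto(f(t_0),g(t_0))$ is proportional to $2\sin(s/2)\,[\,2\lambda\sin(s/2)+\rho s\cos(s/2)\,]$, whose two factors correspond precisely to the two families $s=2p\pi$ and $\tan(s/2)=-\rho s/(2\lambda)$.

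It remains to decide which zeros carry $G$-isotropic Jacobi fields. In sub-case (A) one has $\rho=0$, so $W\subset\ker\tilde R_u$; hence any nontrivial solution has $X'(0)\in\ker\tilde R_u$, and Lemma~\ref{inf} rules out $G$-isotropy, proving the ``not strictly $G$-isotropic'' statement. In sub-case (B) ($\rho>0$) the $v$-direction still lies in $\ker\tilde R_u$ while $[u,v]$ is an eigenvector of $\tilde R_u$ with nonzero eigenvalue $\rho$, so $G$-isotropy is equivalent to $f'(0)=0$; at $s=2p\pi$ the boundary system forces $f'(0)=0$ and leaves $g'(0)$ free, so $X'(0)\in\mathbb R\,[u,v]\subset(\ker\tilde R_u)^{\perp}$ and $X$ is $G$-isotropic, whereas for a root of $\tan(s/2)=-\rho s/(2\lambda)$ with $s\neq 2p\pi$ one has $\sin s\neq 0$ and the equation $g(t_0)=0$ determines $g'(0)$ as a nonzero multiple of $f'(0)$, forcing $f'(0)\neq 0$ and hence failure of $G$-isotropy. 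The main obstacle I anticipate is the clean factorization of the determinant and the careful bookkeeping of which pairs $(f'(0),g'(0))$ survive at each type of root; once that is in hand, all $G$-isotropy conclusions follow mechanically from Lemma~\ref{inf}.
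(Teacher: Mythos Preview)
Your proposal is correct and follows essentially the same route as the paper: both reduce (\ref{Jam1}) to the two-dimensional $\tilde T_u$/$\tilde R_u$-invariant span of $v$ and $[u,v]$, solve the resulting constant-coefficient system, locate the zeros via the same factorization $\sin(s/2)\bigl(2\lambda\sin(s/2)+\rho s\cos(s/2)\bigr)=0$, and decide $G$-isotropy through Lemma~\ref{inf} by checking whether the $v$-component of $X'(0)$ vanishes. The only cosmetic differences are that the paper normalizes $[u,v]$ to the unit vector $w=[u,v]/\sqrt{\lambda}$ and writes out the explicit two-parameter family (\ref{f1}) in terms of constants $A,B$, whereas you keep $[u,v]$ unnormalized and parametrize by the initial data $(f'(0),g'(0))$.
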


\begin{proof} (i) From (\ref{torsion}) we get
$\tilde{T}_{u}v = 0$ and $\tilde{R}_{u}v = \lambda v.$ Then, $X(t) = A\sin\sqrt{\lambda}tv$ is a solution of
(\ref{Jam1}) with $X(0) = 0.$ Because from (\ref{tt2}) $\tilde{R}_{u}$ is
self-adjoint, $v\in ({\rm Ker}\;\tilde{R}_{u})^{\bot}$ and using
Lemma \ref{inf}, $V(t) = (\exp tu)_{*o}X(t)$ is $\Lie{G}$-isotropic.

\noindent (ii) Here, we obtain
$$
\begin{array}{lcllcllcl}
\tilde{T}_{u}v  & = &  -\sqrt{\lambda} w,& &\tilde{T}_{u}w &  = & \sqrt{\lambda}v,\\[0.3pc]
\tilde{R}_{u}v  & = &  0, & & \tilde{R}_{u}w & = & \rho w,
\end{array}
$$
where $w = \frac{1}{\sqrt{\lambda}}[u,v].$ From
(\ref{nred}) and (\ref{tt2}) it follows that $\lambda = \|[u,v]\|^{2}$ and $\rho = \|[u,w]_{\mathfrak k}\|^{2},$ which implies that $\rho =<\tilde{R}_{u}w,w>.$ Hence, the solutions $X(t) =
X^{1}(t) v + X^{2}(t)w$ of (\ref{Jam1}) satisfy
$$
\left \{
\begin{array}{l}
{X^{1}}'' - \sqrt{\lambda}{X^{2}}' = 0,\\
{X^{2}}'' + \sqrt{\lambda} {X^{1}}' + \rho X^{2}= 0.\\
\end{array}
\right.
$$
Now, differentiating the second equation and substituting
${X^{1}}''$ from the first one, we get
\[
{X^{2}}''' + (\lambda + \rho){X^{2}}' = 0.
\]
Therefore, $X(t)$ with $X(0) = 0$ is given by
\begin{eqnarray}\label{f1}
X(t) & = & \sqrt{\frac{\lambda}{\lambda + \rho}}\Big (A( 1 -\cos
\sqrt{\lambda + \rho}t) - B(\frac{\rho\sqrt{\lambda +
\rho}}{\lambda} t + \sin \sqrt{\lambda + \rho}t) \Big )v\\
\nonumber & & + \Big (A\sin\sqrt{\lambda + \rho}t + B(1 - \cos
\sqrt{\lambda +\rho}t)\Big )w,
\end{eqnarray}
where $A,B$ are constant. Then $X'(0) = -B\frac{\lambda + \rho}{\sqrt{\lambda}}v + A\sqrt{\lambda
+ \rho}w$ and from Lemma \ref{inf}, the Jacobi vector fields $V(t) = (\exp tu)_{*o})X(t)$ along $\gamma_{u}$ with $B\neq 0$ are not $G$-isotropic. Hence, if $\rho =0,$ one gets $X(\frac{2p\pi}{\sqrt{\lambda}}) =0,$ for $p\in {\mathbb Z},$ and it proves (A).

Next, suppose $\rho>0.$ The values of $t$ such that $X(t) = 0,$ $t\neq 0,$
for some $A,B\in \RR,$ are the zeros of the determinant
$$
\left |
\begin{array}{lcl}
1- \cos\sqrt{\lambda + \rho}t & & - (\frac{\rho\sqrt{\lambda +
\rho}}{\lambda} t + \sin{\sqrt{\lambda +\rho}t})\\
\sin\sqrt{\lambda + \rho}t & & 1 - \cos\sqrt{\lambda + \rho}t
\end{array}
\right | ,
$$
that is, the zeros of the function $f(s) = 1 - \cos s -\mu s\sin s,$ where $s = s(t) = \sqrt{\lambda +
\rho}t$ and $\mu = -\frac{\rho}{2\lambda}.$
Hence, it follows that either $\sin s=0,$ which gives $s =
2p\pi,$ $p\in {\mathbb Z},$ or,
\[
\cos s =\frac{1-\mu^{2}s^{2}}{1 + \mu^{2}s^{2}},\;\;\;\;\; \sin s
= \frac{2\mu s}{1 + \mu^{2}s^{2}},
\]
which yields to the equation $\tan \frac{s}{2} = \mu s.$ Let $c$ be a solution of $\tan \frac{s}{2} = \mu s.$ Then $f(c) =0$ and it implies, substituting in (\ref{f1}), that $A = -\mu cB.$ So the vector fields $V(t)$ along $\gamma_{u}$
given by
$$
\begin{array}{lcl}
V(t)&  = & (\exp tu)_{*o}\Big ( \sqrt{\frac{\lambda}{\lambda +
\rho}} \Big (\mu c(1 - \cos s(t)) + (2 \mu s(t) + \sin s(t))\Big )v\\[0.6pc]
& &\hspace{0.3cm} +  (\mu c\sin s(t) -( 1 - \cos s(t)))w\Big ),
\end{array}
$$
are Jacobi fields such that $V(\frac{c}{\sqrt{\lambda +\rho}})
= 0$ and they are not $G$-isotropic.

Finally, because $\tilde{R}_{u}w = \rho w$ one gets that $w\in ({\mathrm Ker}\;\tilde{R}_{u})^{\bot}$ and hence, from Lemma \ref{inf}, the vector fields along
$\gamma_{u}$ spanned by
\[
V(t)  = (\exp tu)_{*o}\Big (\sqrt{\frac{\lambda}{\lambda + \rho}}
(1 -\cos s)v + \sin s w \Big )
\]
are $G$-isotropic Jacobi fields with $V(\frac{2p\pi}{\sqrt{\lambda
+ \rho}}) = 0,$ for all $p\in {\mathbb Z}.$
\end{proof}

\begin{remark} {\rm The equation $\tan
\frac{s}{2} = - \frac{\rho s}{2\lambda}$ has for $\rho >0$ a
solution $s_{o}\in ]\pi,2\pi[.$ Therefore,
$\gamma_{u}(\frac{s_{o}}{\sqrt{\lambda + \rho}})$ is the first
conjugate point of $\gamma_{u}$ to the origin among those here
obtained and then the conjugate radius is $\leq
\frac{s_{o}}{\sqrt{\lambda + \rho}}.$ }
\end{remark}
\section{Normal homogeneous metrics of positive curvature on symmetric spaces}\indent
A homogeneous Riemannian manifold with positive sectional curvature is compact and if it is moreover simply connected, it can be written as $\Lie{G}/\Lie{K}$ with $\Lie{G}$ a compact Lie group. Wallach \cite[Theorem 6.1]{Wa} showed  that
for the even-dimensional case, $\dim M =2n,$ it is
isometric to
\begin{enumerate}
\item[{\rm (i)}] a compact rank one symmetric space: ${\mathbb
C}P^{n},$ $S^{2n},$ ${\mathbb H}P^{n/2}$ $(n$ even{\rm )},
${\mathbb C}aP^{2}$ $(n =8);$

\noindent or

\item[{\rm (ii)}] one of the following quotient spaces $G/K$ with
a suitable $G$-invariant metric:
\begin{enumerate}
\item[{\rm (1)}] the manifolds of flags ${\mathbb F}^{6},$ ${\mathbb F}^{12}$ and ${\mathbb F}^{24}$ in ${\mathbb C}P^{2},$ ${\mathbb H}P^{2}$ and ${\mathbb C}aP^{2},$ respectively:
$$
\begin{array}{lcl}
{\mathbb F}^{6} & = & \Lie{SU}(3)/(S(\Lie{U}(1)\times \Lie{U}(1)\times \Lie{U}(1)),\\
{\mathbb F}^{12} & = & \Lie{Sp}(3)/(\Lie{SU}(2)\times \Lie{SU}(2)\times \Lie{SU}(2)),\\
{\mathbb F}^{24} & = & F_{4}/\Lie{Spin}(8);
\end{array}
$$
\item[{\rm (2)}] ${\mathbb C}P^{n} = \Lie{Sp}(m+1)/(\Lie{Sp}(m)\times \Lie{U}(1)),$
$n = 2m+1;$ \item[{\rm (3)}] the six-dimensional sphere $S^{6} =
\Lie{G}_{2}/\Lie{SU}(3).$
\end{enumerate}
\end{enumerate}
\begin{remark}{\rm In (i), the quotient spaces determined by the pairs $(\Lie{SU}(n+1),\Lie{S}(\Lie{U}(n)\times U(1))),$ $(\Lie{Spin}(2n+1),\Lie{Spin}(2n)),$ $(\Lie{Sp}(n),\Lie{Sp}(n-1)\times\Lie{Sp}(1))$ and $(\Lie{F}_{4},\Lie{Spin}(9))$ are isotropy-irreducible and so, they admit, up to homotheties, a unique invariant Riemannian metric.}
\end{remark}
Valiev \cite{Va} determined the set of all homogeneous Riemannian metrics on ${\mathbb
F}^{6},$ ${\mathbb F}^{12}$ and ${\mathbb F}^{24}$ of positive sectional curvature and their corresponding optimal
pinching constants. (Pinching constants means how much the local geometry of a compact Riemannian manifold $(M,g)$ with positive sectional curvature $K$ deviates from the geometry of a standard sphere. They are defined as quotients $\delta(M,g) = \frac{{\rm min}\;K}{{\rm max}\; K}$ of the extremal values of the sectional curvature.) According to the Berger's classification, they
cannot be normal homogeneous. It is worthwhile to note that $S^{6}
= \Lie{G}_{2}/\Lie{SU}(3)$ carries the usual metric of constant
sectional curvature \cite{B1}, the isotropy action of $\Lie{SU}(3)$ is irreducible on the
tangent space (see \cite{B}) and it is a nearly-K\"ahler $3$-symmetric space but $(\Lie{G}_{2},\Lie{SU}(3))$ is not a symmetric pair.

The complex projective space ${\mathbb C}P^{n},$ $n = 2m+1,$ equipped with the standard
$\Lie{Sp}(m+1)$-homogeneous
 Riemannian manifold, is also an irreducible compact nearly K\"ahler $3$-symmetric space \cite{G}. It can be viewed as the base space of the Hopf fibration
 \[
 S^{1}\to S^{4m+3} = \Lie{Sp}(m+1)/\Lie{Sp}(m) \to {\mathbb C}P^{n} = \Lie{Sp}(m+1)/(\Lie{Sp}(m)\times \Lie{U}(1)).
 \]
Here, the inclusion of $\Lie{Sp}(m)$ in $\Lie{Sp}(m+1)$ is the standard one. Taking $\Lie{Sp}(m+1)$ as Lie subgroup of $\Lie{SU}(2(m+1))$ also in the natural way, we obtain a reductive decomposition ${\mathfrak s}{\mathfrak p}(m+1) = {\mathfrak s}{\mathfrak p}(m) \oplus {\mathfrak m}$ adapted to the quotient $\Lie{Sp}(m+1)/\Lie{Sp}(m),$ with ${\mathfrak m} = {\mathfrak m}_{0} \oplus {\mathfrak m}_{1},$ being ${\mathfrak m}_{0}= {\mathfrak s}{\mathfrak u}(2) \cong {\mathfrak s}{\mathfrak p}(1)$ and ${\mathfrak m}_{1}\cong {\mathbb H}^{m}.$ The isotropy Lie subgroup $\Lie{Sp}(m)$ acts trivially on ${\mathfrak m}_{0}$ and by its standard representation on ${\mathfrak m}_{1}.$

Let $E_{ij}$ denote the square matrix on ${\mathfrak s}{\mathfrak u}(2(m+1))$ with entry $1$ where the
$i$th row and the $j$th column meet, all other entries being $0,$ and set
\begin{equation}\label{ABC}
\begin{array}{lcl}
A_{jk} &  = & \sqrt{-1}(E_{jj} -E_{kk}),\\[0.4pc]
B_{jk} & = &  E_{jk} - E_{kj},\\[0.4pc]
C_{jk} & = & \sqrt{-1}(E_{jk} +E_{kj}).
\end{array}
\end{equation}
In what follows we shall need the Lie  multiplication table:
\begin{equation}\label{bracABC}
\begin{array}{lcl}
[A_{rj},A_{kl}] & = & 0,\\[0.4pc]
[A_{rj},B_{kl}] & = & \delta_{rk}C_{rl} - \delta_{rl}C_{rk} - \delta_{jk}C_{jl} + \delta_{jl} C_{jk},\\[0.4pc]
[A_{rj},C_{kl}] & = & -\delta_{rk}B_{rl} - \delta_{rl}B_{rk} + \delta_{jk}B_{jl} + \delta_{jl}B_{jk},\\[0.4pc]
[B_{rj},B_{kl}] & = & \delta_{jk}B_{rl} - \delta_{jl}B_{rk} - \delta_{rk}B_{jl} + \delta_{rl}B_{jk},\\[0.4pc]
[B_{rj},C_{kl}] & = & \delta_{jl}C_{rk} + \delta_{jk}C_{rl} - \delta_{rl}C_{jk} - \delta_{rk}C_{jl},\\[0.4pc]
[C_{rj},C_{kl}] & = & -\delta_{jk}B_{rl} - \delta_{jl}B_{rk} - \delta_{rk}B_{jl} - \delta_{rl}B_{jk}.
\end{array}
\end{equation}
The canonical basis $Y_{1},\dots ,Y_{m}$ of ${\mathbb H}^{m}$ over ${\mathbb H}$ is given by $Y_{\alpha} = B_{\alpha,2m+1} + B_{m+\alpha,2(m+1)},$ for $\alpha = 1,\dots, m.$ If we put $X_{1} = i = A_{2m+1,2(m+1)},$ $X_{2} = j= B_{2m+1,2(m+1)}$ and $X_{3} = k = C_{2m+1,2(m+1)}$ as basis of ${\mathfrak m}_{0}$ then the corresponding basis $\{Y_{\alpha}; Y_{\alpha 1};Y_{\alpha 2}; Y_{\alpha 3}\}_{\alpha=1}^{m}$ over $\RR,$ where $Y_{\alpha 1} = iY_{\alpha},$ $Y_{\alpha 2} = jY_{\alpha},$ $Y_{\alpha 3} = k Y_{\alpha}$ is obtained by using of Lie brackets on ${\mathfrak m}.$ For $p,q,r$ a cyclic permutation of $1,2,3,$ one gets:
\begin{equation}\label{bracS}
 \begin{array}{lcl llcl l lcl}
 [X_{p},X_{q}] & = & 2 X_{r}, &  [X_{p},Y_{\alpha}] & = & - Y_{\alpha p},  & [X_{p},Y_{\alpha p}] & = &  Y_{\alpha},\\[0.4pc]
[X_{p}, Y_{\alpha q}] & = & Y_{\alpha r},  &  [Y_{\alpha},Y_{\beta}] & = & -Z_{\alpha,\beta}, &  [Y_{\alpha}, Y_{\alpha p}] & = & -2X_{p} + 2 Z_{\alpha p},\\[0.4pc]
[Y_{\alpha p},Y_{\alpha q}] & = & 2X_{r} + 2Z_{\alpha r}, & & & & & &
\end{array}
 \end{equation}
where $Z_{\alpha,\beta} = B_{\alpha,\beta} + B_{m + \alpha, m+ \beta},$ $1\leq \alpha < \beta\leq m,$ and $Z_{\alpha 1} = A_{\alpha,m+\alpha},$ $Z_{\alpha 2} = B_{\alpha,m+\alpha}$ and $Z_{\alpha 3} = C_{\alpha, m+\alpha},$ $1\leq \alpha \leq m.$ Moreover, for $\alpha\neq \beta,$
\begin{equation}\label{bracS1}
[Y_{\alpha},Y_{\beta p}] =Z_{(\alpha,\beta)p},\;\;\;\;\; [Y_{\alpha p},Y_{\beta p}] = -Z_{\alpha,\beta},\;\;\;\;\; [Y_{\alpha p},Y_{\beta q}] = Z_{(\alpha,\beta)r},
\end{equation}
where $Z_{(\alpha,\beta)1} = C_{\alpha,\beta} - C_{m+\alpha,m+\beta},$ $Z_{(\alpha,\beta)2} = B_{\alpha,m+\beta} + B_{\beta,m+\alpha}$ and $Z_{(\alpha,\beta)3}= C_{\alpha,m+\beta} + C_{m+\alpha,\beta}.$

A reductive decomposition of ${\mathfrak s}{\mathfrak p}(m+1)$ adapted to $\Lie{Sp}(m+1)/(\Lie{Sp}(m)\times \Lie{U}(1))$ is then ${\mathfrak s}{\mathfrak p}(m+1) = ({\mathfrak s}{\mathfrak p}(m)\oplus {\mathfrak u}(1))\oplus {\mathfrak p},$ where ${\mathfrak u}(1)$ is generated by $X_{1}$ and ${\mathfrak p} = {\mathfrak p}_{0} \oplus {\mathfrak p}_{1},$ being ${\mathfrak p}_{0}$ the subspace generated by $X_{2}$ and $X_{3}$ and ${\mathfrak p}_{1} = {\mathfrak m}_{1}\cong {\mathbb H}^{m}.$ In fact, ${\mathfrak p}_{0}$ is a Lie triple system with corresponding totally geodesic submanifold through the origin the $2$-sphere $\Lie{Sp}(1)/\Lie{U}(1).$ Hence, one obtains the {\em homogeneous fibration} over ${\mathbb H}P^{m}$
 \[
 \Lie{Sp}(1)/\Lie{U}(1) \to \Lie{Sp}(m+1)/(\Lie{Sp}(m)\times \Lie{U}(1)) \to \Lie{Sp}(m+1)/(\Lie{Sp}(m)\times \Lie{Sp}(1)).
 \]
 Every $\Lie{Sp}(m+1)$-invariant metric on ${\mathbb C}P^{n}=\Lie{Sp}(m+1)/(\Lie{Sp}(m)\times \Lie{U}(1))$ is determined by an inner product $\langle\cdot,\cdot\rangle$ on ${\mathfrak p}$ such that $\{Y_{\alpha}; iY_{\alpha};jY_{\alpha}; kY_{\alpha}\}_{\alpha=1}^{m}$ is an orthonormal basis of ${\mathfrak p}_{1},$ ${\mathfrak p}_{0}$ is orthogonal to ${\mathfrak p}_{1}$ and $\|X_{2}\|^{2} = \|X_{3}\|^{2} = s,$ for some $s>0.$ Then, if we suppose that $\langle\cdot, \cdot\rangle$ is ${\mathrm A}{\mathrm d}(\Lie{Sp}(m+1))$-invariant, it follows, using (\ref{tt2}) and (\ref{bracS}), that
 \[
 1 = \langle jY_{\alpha}, jY_{\alpha}\rangle = \langle [Y_{\alpha},X_{2}],jY_{\alpha}\rangle = -\langle [Y_{\alpha},jY_{\alpha}],X_{2}\rangle = 2\langle X_{2},X_{2}\rangle = 2s.
 \]
 Hence, $s = \frac{1}{2}$ and then $\langle X,Y\rangle = -\frac{1}{4}{\rm trace}\;XY,$ for all $X,Y\in {\mathfrak s}{\mathfrak p}(m+1).$ It determines a standard $\Lie{Sp}(m+1)$-homogeneous Riemannian metric with positive sectional curvature and pinching $\delta = \frac{1}{16}$ \cite{Z1} and so, it is not the symmetric Fubini-Study one. It proves the following:

 \begin{proposition}\label{mean1} A simply connected,
$2n$-dimensional, normal homogeneous space of positive
sectional curvature is isometric to a compact rank one symmetric
space: $S^{2n}$ $(\delta =1);$ ${\mathbb C}P^{n},$ ${\mathbb H}P^{n/2}$ $(n$ even), ${\mathbb C}aP^{2}$ $(n = 8),$ $(\delta =\frac{1}{4});$ or to the complex projective space
${\mathbb C}P^{n} =
\Lie{Sp}(m+1)/(\Lie{Sp}(m)\times \Lie{U}(1)),$ $n = 2m+1,$ equipped with the standard
$\Lie{Sp}(m+1)$-homogeneous Riemannian metric $(\delta = \frac{1}{16}).$
\end{proposition}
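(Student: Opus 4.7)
The strategy is to combine Wallach's classification \cite[Theorem~6.1]{Wa} of simply connected even-dimensional compact homogeneous spaces of positive curvature with Berger's classification \cite{Be} and a case-by-case metric analysis. Since $(M,g)$ is positively curved it is compact by Bonnet--Myers; being simply connected and homogeneous, we can write $M=\Lie{G}/\Lie{K}$ with $\Lie{G}$ compact, and Wallach's theorem then restricts $M$ to one of: a compact rank one symmetric space; one of the flag manifolds $\mathbb{F}^{6},\mathbb{F}^{12},\mathbb{F}^{24}$; the space $\Lie{Sp}(m+1)/(\Lie{Sp}(m)\times\Lie{U}(1))$ with $n=2m+1$; or $S^{6}=\Lie{G}_{2}/\Lie{SU}(3)$.

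I would first discard the three flag manifolds: Berger's classification, as recalled just after Valiev's theorem in the text, implies that none of them admits any normal homogeneous metric of positive curvature. Next, I would deal with the isotropy-irreducible pairs listed in the Remark, namely $(\Lie{SU}(n+1),\Lie{S}(\Lie{U}(n)\times \Lie{U}(1)))$, $(\Lie{Spin}(2n+1),\Lie{Spin}(2n))$, $(\Lie{Sp}(n),\Lie{Sp}(n-1)\times\Lie{Sp}(1))$, $(\Lie{F}_{4},\Lie{Spin}(9))$ and $(\Lie{G}_{2},\Lie{SU}(3))$. By Schur's lemma each of these admits a unique $\Lie{G}$-invariant inner product on $\mathfrak{m}$ up to homothety, which must therefore coincide (up to scaling) with the standard symmetric metric; in particular $\Lie{G}_{2}/\Lie{SU}(3)$ becomes the round $S^{6}$ and so falls into the family $S^{2n}$. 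This gives the entries with $\delta=1$ and $\delta=\tfrac{1}{4}$.

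The only genuinely non-trivial case is $\mathbb{C}P^{n}=\Lie{Sp}(m+1)/(\Lie{Sp}(m)\times \Lie{U}(1))$ with $n=2m+1$, where the isotropy representation is reducible. The decomposition $\mathfrak{p}=\mathfrak{p}_{0}\oplus\mathfrak{p}_{1}$ described in the text has $\mathfrak{p}_{0}=\mathrm{span}\{X_{2},X_{3}\}$, so a priori the $\Lie{Sp}(m+1)$-invariant metrics form a one-parameter family, parametrised by $s=\|X_{2}\|^{2}=\|X_{3}\|^{2}>0$ once the $\Lie{Sp}(m)$-invariant inner product on $\mathfrak{p}_{1}$ is normalised. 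Imposing normality, i.e.\ that $\langle\cdot,\cdot\rangle$ extends $\Lie{Ad}(\Lie{Sp}(m+1))$-invariantly to $\mathfrak{sp}(m+1)$, and using the bracket $[Y_{\alpha},X_{2}]=jY_{\alpha}$ from (\ref{bracS}) together with (\ref{tt2}), one runs the short computation
\[
1=\langle jY_{\alpha},jY_{\alpha}\rangle=\langle [Y_{\alpha},X_{2}],jY_{\alpha}\rangle=-\langle [Y_{\alpha},jY_{\alpha}],X_{2}\rangle=2\,\langle X_{2},X_{2}\rangle=2s,
\]
so $s=\tfrac{1}{2}$. This pins down the normal $\Lie{Sp}(m+1)$-homogeneous metric uniquely up to homothety as the one induced by $-\tfrac{1}{4}\,\tr XY$ on $\mathfrak{sp}(m+1)$, and the pinching $\delta=\tfrac{1}{16}$ recorded in \cite{Z1} certifies that it is not the Fubini--Study metric, producing the only non-symmetric entry of the proposition.

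The main obstacle is really only in this last step, and even there the single normality identity above does essentially all the work: once $s$ is determined, everything else follows from citing Wallach and Berger and applying Schur's lemma to the irreducible isotropies.
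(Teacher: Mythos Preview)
Your proposal is correct and follows essentially the same route as the paper: invoke Wallach's classification, discard the flag manifolds via Berger's list, note that the isotropy-irreducible pairs (including $\Lie{G}_2/\Lie{SU}(3)$) carry a unique invariant metric up to scale, and then run precisely the same normality computation $1=\langle jY_\alpha,jY_\alpha\rangle=\cdots=2s$ on $\Lie{Sp}(m+1)/(\Lie{Sp}(m)\times\Lie{U}(1))$ to force $s=\tfrac12$. The paper's argument is organised as an extended discussion preceding the proposition rather than as a labelled proof, but the content and the single nontrivial calculation are identical to yours.
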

\begin{remark}{\rm The normal ${\Lie Sp}(m+1)$-homogeneous metric on ${\mathbb C}P^{n}$ is Einstein if and only if $n =3$ \cite{Z1}.
}
\end{remark}

Using the list of groups acting transitively on spheres given by Montgomery, Samelson and Borel (see \cite{Onis}), Ziller \cite{Z1} obtained all homogeneous Riemannian metrics on the sphere $S^{n}.$ Next, we find those which are normal homogeneous with positive curvature.
\begin{proposition}\label{pmean2} All normal homogeneous metrics on spheres have positive sectional curvature and they determine the following Riemannian manifolds:
\begin{enumerate}
\item[{\rm (i)}] the Euclidean sphere $S^{n};$
\item[{\rm (ii)}] $(S^{2m+1} = \Lie{SU}(m+1)/\Lie{SU}(m),g_{s});$
\item[{\rm (iii)}] $(S^{4m+3}= \Lie{Sp}(m+1)/\Lie{Sp}(m),g_{s}),$
\end{enumerate}
where in {\rm (ii)} and {\rm (iii)} $g_{s},$ $0<s\leq 1,$ denotes a normal homogeneous metric with corresponding constant pinching $\delta(s) = \frac{s(m+1)}{8m - 3s(m+1)}$ and
$$
\delta(s) = \left\{
\begin{array}{l}
 \frac{s}{8-3s}\;\;\; {\mbox if}\; s\in [\frac{2}{3},1];\\[0.6pc]
\frac{s^{2}}{4},\;\;\; {\mbox if}\; s\in ]0,\frac{2}{3}[.
\end{array}
\right.
$$
In {\rm (ii)} for $s<1$ $(s=1)$ they are $\Lie{SU}(m+1)\times \Lie{U}(1)$ $(\Lie{SU}(m+1))$-normal homogeneous and in {\rm (iii)}, ${\Lie Sp}(m+1)\times \Lie{Sp}(1)$ $(\Lie{Sp}(m+1))$-normal homogeneous.
\end{proposition}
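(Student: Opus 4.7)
The plan is to start from the Montgomery--Samelson--Borel classification of Lie groups $\Lie{G}$ acting transitively on $S^n$ with isotropy $\Lie{K}$ and, for each such pair, enumerate the ${\mathrm A}{\mathrm d}(\Lie{G})$-invariant inner products on $\mathfrak{g}$; by definition, their restrictions to a reductive complement $\mathfrak{m}$ exhaust the normal $\Lie{G}$-homogeneous metrics on $\Lie{G}/\Lie{K}$. When $\Lie{G}$ is simple, such an inner product is unique up to a scalar multiple of the Killing--Cartan form, giving a rigid normal metric; when $\Lie{G}=\Lie{G}_1\times \Lie{G}_2$ with $\Lie{G}_2$ simple or a circle, the bi-invariant forms split independently on each ideal and, after an overall normalization, produce a one-parameter family indexed by the ratio $s\in(0,1]$ of the two normalizations.

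The second step reduces the long transitive-action list to the three stated cases. The presentations $S^n=\Lie{SO}(n+1)/\Lie{SO}(n)$, $S^6=\Lie{G}_2/\Lie{SU}(3)$ and $S^7=\Lie{Spin}(7)/\Lie{G}_2$ have isotropy-irreducible $\mathfrak{m}$, so by uniqueness the normal metric is the round one. For $S^{15}=\Lie{Spin}(9)/\Lie{Spin}(7)$, $\mathfrak{m}$ splits as vector $\oplus$ spin under $\Lie{Spin}(7)$, but the Killing form of $\Lie{Spin}(9)$ restricts by the same scalar to both summands and again recovers the round metric. The remaining families come exactly from the Hopf-type presentations $\Lie{SU}(m+1)/\Lie{SU}(m)=S^{2m+1}$ and $\Lie{Sp}(m+1)/\Lie{Sp}(m)=S^{4m+3}$, where $\mathfrak{m}=\mathfrak{m}_0\oplus\mathfrak{m}_1$ decomposes into vertical and horizontal parts of the Hopf bundle. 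Enlarging $\Lie{G}$ to $\Lie{SU}(m+1)\times \Lie{U}(1)$, respectively $\Lie{Sp}(m+1)\times \Lie{Sp}(1)$, permits rescaling the vertical summand independently; normalizing the horizontal part to unit scale produces the Berger family $g_s$, with $s=1$ recovering the simple-$\Lie{G}$ case. The auxiliary presentations $\Lie{U}(n)/\Lie{U}(n-1)$, $\Lie{Sp}(n)\Lie{Sp}(1)/\Lie{Sp}(n-1)\Lie{Sp}(1)$ and $\Lie{Sp}(n)\Lie{U}(1)/\Lie{Sp}(n-1)\Lie{U}(1)$ then coincide, as Riemannian manifolds, with members of these two families and supply no new metrics.

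Positivity of sectional curvature follows from \eqref{curnor} combined with Lemma~\ref{lrank}: using the bracket tables \eqref{bracS}--\eqref{bracS1} one verifies that in each case $[X,Y]\neq 0$ in $\mathfrak{g}$ for every pair of linearly independent $X,Y\in\mathfrak{m}$ (for $s<1$ the extra simple factor contributes non-trivially through the vertical direction). The pinching is then obtained by extremizing
\[
K(X,Y)=\|[X,Y]_{\mathfrak{k}}\|^2+\tfrac{1}{4}\|[X,Y]_{\mathfrak{m}}\|^2
\]
over orthonormal 2-planes $\{X,Y\}\subset\mathfrak{m}_0\oplus\mathfrak{m}_1$. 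In the complex Berger case the vertical summand is one-dimensional, the extremizing planes are unambiguous, and the bracket calculations give directly $\delta(s)=s(m+1)/(8m-3s(m+1))$.

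The main obstacle is the quaternionic case, where $\dim\mathfrak{m}_0=3$ and the formula for $\delta(s)$ changes at $s=2/3$: two competing families of 2-planes (one purely horizontal of quaternionic type, one mixed vertical-horizontal) exchange the role of minimizer at this threshold. Identifying both, computing their sectional curvatures from \eqref{bracS}--\eqref{bracS1}, and ruling out other candidates is the most delicate step; once carried out, the piecewise formula $\delta(s)=s/(8-3s)$ for $s\geq 2/3$ and $\delta(s)=s^2/4$ for $s<2/3$ follows, and noting that $g_1$ is already $\Lie{Sp}(m+1)$-normal while $g_s$ for $s<1$ requires the enlarged group completes the classification.
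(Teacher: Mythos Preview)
Your overall strategy---run through the Montgomery--Samelson--Borel list and analyse the bi-invariant inner products case by case---is exactly the one the paper uses. The chief difference is that the paper outsources most of the actual work to Ziller's papers \cite{Z,Z1}: the characterisation of which $g_s$ are normal (and with respect to which enlarged group), the positivity of sectional curvature, and the explicit pinching constants are all quoted from \cite{Z} and \cite{Z1}, rather than computed from the bracket tables as you propose. Your more self-contained route is perfectly viable, but several of the steps you pass over quickly are precisely where the content lies.

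The most important gap is your treatment of the parameter range. You write that the bi-invariant forms on $\mathfrak{g}_1\oplus\mathfrak{g}_2$ ``produce a one-parameter family indexed by the ratio $s\in(0,1]$ of the two normalizations.'' But the ratio of the two scalars ranges over $(0,\infty)$; it is only after one computes the orthogonal complement $\mathfrak m_s$ of $\mathfrak k$ with respect to each such form and identifies the induced metric on $T_oS^n$ that one sees the vertical rescaling factor $s$ lies in $(0,1)$, with $s=1$ reached only in the degenerate limit where the extra factor decouples (i.e.\ the simple-$G$ case). This is the heart of the normality statement in the proposition: the metrics $g_s$ with $s>1$ are still $G$-naturally reductive but are \emph{not} normal homogeneous for any group, and this is exactly what the paper extracts from \cite[Theorem~3]{Z}. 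Without this computation your argument does not explain why the list stops at $s=1$.

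Two smaller points. Your assertion that the Killing form of $\mathfrak{spin}(9)$ restricts with the same scalar to the vector and spin summands of $\mathfrak m$ is correct and does yield the round metric, but it is a non-obvious computation, not an immediate consequence of irreducibility; the paper bypasses it by citing \cite{Z1}. Likewise, your claim that the $\Lie{Sp}(n)\Lie{U}(1)$ presentation ``supplies no new metrics'' needs justification: the bi-invariant forms on $\mathfrak{sp}(n)\oplus\mathfrak u(1)$ genuinely produce a one-parameter family of normal metrics on $S^{4n-1}$, and one must check that each of them already appears in families (i)--(iii). The paper again simply cites \cite{Z1} here.
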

\begin{proof} Consider the complex projective space ${\mathbb C}P^{m} = \Lie{SU}(m+1)/S(\Lie{U}(m)\times \Lie{U}(1))$ as Hermitian symmetric space, equipped with the $\Lie{SU}(m+1)$-standard homogeneous metric determined by $\langle X,Y\rangle = -\frac{1}{2}{\rm trace}\; XY$ on ${\mathfrak s}{\mathfrak u}(m+1).$ Let ${\mathfrak s}{\mathfrak u}(m+1) = {\mathfrak k} \oplus {\mathfrak m}_{1}$ be the canonical decomposition, where ${\mathfrak k}$ is the Lie algebra of the isotropy group and ${\mathfrak m}_{1} = {\mathfrak k}^{\bot}\cong {\mathbb C}^{m}.$ The center ${\mathcal Z}({\mathfrak k})$ of ${\mathfrak k}$ is one-dimensional and we have ${\mathfrak k} = {\mathfrak s}{\mathfrak u}(m)\oplus {\mathcal Z}({\mathfrak k}).$ Let $Z_{0}\in {\mathcal Z}({\mathfrak k})$ such that $\|Z_{0}\| =1.$ Then $Z_{0}$ determines a unit Killing vector field $\xi$ on $S^{2m+1} = \Lie{SU}(m+1)/\Lie{SU}(m)$ and its integral curves -they are geodesics- are the fibers of the Hopf fibration $S^{1} \to S^{2m+1} \to {\mathbb C}P^{m}.$ The natural projection is a Riemannian submersion, where $S^{2m+1}$ is considered with the $SU(m+1)$-standard metric. Then all $\Lie{SU}(m+1)$-invariant metrics on $S^{2m+1}$ are obtained, up to a scaling factor, taking $\langle\cdot,\cdot \rangle$ on ${\mathfrak m}_{1},$ $Z_{0}$ orthogonal to ${\mathfrak m}_{1}$ and $\|Z_{0}\|^{2} = s,$ for same $s>0.$ From \cite[Theorem 3]{Z}, each one of these metrics $g_{s}$ is $(\Lie{SU}(m+1)\times \Lie{U}(1))$-naturally reductive; for $s\neq 1$ it is not $\Lie{SU}(m+1)$-normal homogeneous and $g_{s}$ is $(\Lie{SU}(m+1)\times \Lie{U}(1))$-normal homogeneous if and only if $s<1.$ Then the metrics $g_{s},$ for $s\leq 1,$ are normal homogeneous. Moreover, they have positive sectional curvature. The minimum and maximum sectional curvatures are \cite{Z}: $s\frac{m+1}{2m}$ and $4 - 3s\frac{m+1}{2m}$ and it gives the corresponding pinching constants $\delta(s).$

 Every $\Lie{Sp}(m+1)$-invariant metric on $S^{4m+3}$ is determined, up to scaling factor, by an inner product $\langle\cdot,\cdot\rangle$ on ${\mathfrak m}$ such that $\{Y_{\alpha}; iY_{\alpha};jY_{\alpha}; kY_{\alpha}\}_{\alpha=1}^{n}$ is an orthonormal basis of ${\mathfrak m}_{1},$ ${\mathfrak m}_{0}$ is orthogonal to ${\mathfrak m}_{1}$ and $\|X_{p}\|^{2} = t_{p},$ for some $t_{p}>0,$ $p=1,2,3.$ From (\ref{bracS}), $\langle\cdot,\cdot\rangle$ is naturally reductive if and only if $t_{1} = t_{2} = t_{3}= \frac{1}{2}.$ Because $\Lie{Sp}(m)$ acts trivially on ${\mathfrak m}_{0}$ and $[{\mathfrak m}_{0},{\mathfrak m}_{0}]\subset {\mathfrak m}_{0},$ it follows from \cite[Theorem 3]{Z} that the $Sp(m+1)$-invariant metrics $g_{s}$ on $S^{4m+3}$ where $t_{1} = t_{2} = t_{3} =\frac{s}{2}$ are $\Lie{Sp}(m+1)\times \Lie{Sp}(1)$-naturally reductive and $\Lie{Sp}(m+1)\times \Lie{Sp}(1)$-normal homogeneous if and only if $s<1.$ So the metrics $g_{s},$ for $s\leq 1,$ are normal homogeneous. From \cite{Z1}, they have positive sectional curvature with corresponding pinching $\delta(s)$ as in (iii).

 The rest of Lie groups acting transitively on spheres and with reducible isotropy are $\Lie{Sp}(m+1)\times \Lie{U}(1)$ on $S^{4m+3} = \Lie{Sp}(m+1)\times \Lie{U}(1)/\Lie{Sp}(m)\times \Lie{U}(1)$ and $\Lie{Spin}(9)$ on $S^{15} = \Lie{Spin}(9)/\Lie{Spin}(7).$ For both cases, all invariant metrics, different from those given above, are not naturally reductive \cite{Z1}. Those with irreducible isotropy are $\Lie{SO}(n+1),$ acting on $S^{n} = \Lie{SO}(n+1)/\Lie{SO}(n),$ $\Lie{Spin}(7)$ on $S^{7} = \Lie{Spin}(7)/\Lie{G}_{2}$ and $\Lie{G}_{2}$ on $S^{6} = \Lie{G}_{2}/\Lie{SU}(3).$ In all three cases the unique invariant metric, up to a scalar factor, has constant sectional curvature \cite{B1}. Hence, it follows that all normal homogeneous metrics on spheres have positive sectional curvature and it gives the result.
 \end{proof}

 \begin{remark}{\rm $S^{2m+1}$ equipped with one of the metrics $g_{s},$ $s\leq 1,$ in (ii) is known as a {\em Berger sphere}. Such metrics are not Einstein except the one of constant curvature for $m = 1$ \cite{J1}. Since $\Lie{U}(m+1)$ also acts by isometries on $(S^{2m+1},g_{s}),$ it follows that the set of $\Lie{U}(m+1)$-invariant metrics on $S^{2m+1}$ coincides with the set of $\Lie{SU}(m+1)$-invariant metrics $g_{s}.$

Following \cite{Z1}, the unique non-Euclidean normal homogeneous Riemannian metric on sphe\-res which is Einstein is given in (iii) for $s = \frac{2}{2m+3}$ on $S^{4m+3},$ which is the Jensen's example \cite{J1}.
}
\end{remark}
\section{Existence of non-isotropic Jacobi fields}\indent

Let $(M = \Lie{G}/\Lie{K},g)$ be a normal homogeneous space and $\langle\cdot,\cdot\rangle$ its corresponding bi-invariant inner product on the Lie algebra ${\mathfrak g}$ of $\Lie{G}.$ Consider a closed subgroup $H$ of $G$ such that $K\subset H \subset G.$ Then we define the homogeneous fibration
\[
F=\Lie{H}/\Lie{K}\to M=\Lie{G}/\Lie{K}\to \tilde{M}= \Lie{G}/\Lie{H}\;\;:\;\; g\Lie{K}\mapsto g\Lie{H},
\]
with fiber $F$ and structural group $\Lie{H}.$ The $\langle\cdot,\cdot\rangle$-orthogonal decompositions ${\mathfrak h} = {\mathfrak k}\oplus {\mathfrak m}_{0},$ ${\mathfrak g} = {\mathfrak k}\oplus {\mathfrak m} = {\mathfrak k} \oplus {\mathfrak m}_{0}\oplus {\mathfrak m}_{1}$ and ${\mathfrak g} = {\mathfrak h}\oplus {\mathfrak m}_{1}$ determine reductive decompositions for $F,$ $M$ and $\tilde{M},$ respectively, and ${\mathfrak m}_{0}$ and ${\mathfrak m}_{1}$ are invariant under the linear isotropy action of $K.$ The projection $\pi:(M,g)\to (\tilde{M},\tilde{g}),$ where $\tilde{g}$ is induced by $\langle\cdot,\cdot\rangle_{{\mathfrak m}_{1}\times {\mathfrak m}_{1}},$ is a Riemannian submersion and, because ${\mathfrak m}_{0}$ is a L.t.s. of ${\mathfrak m},$ its fibers are totally geodesic.

Given a horizontal geodesic $\gamma_{u}(t) = (\exp tu)o,$ $u\in {\mathfrak m}_{1},$ of $(M,g),$ we consider the geodesic $\tilde{\gamma}_{u}(t) = (\pi\comp \gamma_{u})(t) = (\exp tu)\tilde{o}$ of $(\tilde{M},\tilde{g}),$ where $\tilde{o}$ is the origin of $\tilde{M}.$
\begin{lemma}\label{lsubmersion} If $\gamma_{u}(t_{0})$ is a $G$-isotropic conjugate point to $o$ in $M,$ then $\tilde{\gamma}_{u}(t_{0})$ is $G$-isotropic conjugate to $\tilde{o}$ in $\tilde{M}.$
\end{lemma}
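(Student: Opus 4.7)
The plan is to exploit the $\Lie{G}$-equivariance of the projection $\pi\colon M\to\tilde M$. Since the $\Lie{G}$-action on $\tilde M=\Lie{G}/\Lie{H}$ is compatible with the one on $M=\Lie{G}/\Lie{K}$, each $X\in{\mathfrak g}$ determines fundamental vector fields on both spaces and $\pi_{*}\comp X^{*}_{M}=X^{*}_{\tilde M}\comp \pi$. This identity will transport an isotropic Jacobi field witnessing the conjugacy in $M$ to one in $\tilde M$.

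First I would use the hypothesis: by the characterization of $\Lie{G}$-isotropic Jacobi fields vanishing at the origin, there exists $A\in {\mathfrak k}$ such that $V=A^{*}_{M}\comp \gamma_{u}$ is a nonzero Jacobi field along $\gamma_{u}$ with $V(0)=V(t_{0})=0$. Because $\Lie{K}\subset \Lie{H}$, the element $A$ also lies in ${\mathfrak h}$, so $A^{*}_{\tilde M}$ is a Killing vector field on $\tilde M$ vanishing at $\tilde o$. Define $\tilde V:=A^{*}_{\tilde M}\comp \tilde\gamma_{u}$. By construction $\tilde V$ is a $\Lie{G}$-isotropic Jacobi field along $\tilde\gamma_{u}$ with $\tilde V(0)=0$. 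Applying $\pi_{*}$ to $V$ and using the equivariance identity gives $\tilde V=\pi_{*}\comp V$, so also $\tilde V(t_{0})=\pi_{*}(V(t_{0}))=0$.

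The main thing to check is that $\tilde V$ is not identically zero, i.e.\ that the projection does not kill $V$. For this I would invoke the initial condition (\ref{ini}): in $M$ we have $V'(0)=[A,u]$, and since $V\neq 0$ with $V(0)=0$ this forces $[A,u]\neq 0$. Now note that ${\mathfrak m}_{1}={\mathfrak h}^{\bot}$ is $\mathrm{Ad}(\Lie{H})$-invariant (hence also $\mathrm{Ad}(\Lie{K})$-invariant), so with $A\in{\mathfrak k}\subset{\mathfrak h}$ and $u\in{\mathfrak m}_{1}$ we get $[A,u]\in{\mathfrak m}_{1}$; in particular $V'(0)$ is horizontal. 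Applying (\ref{ini}) to the reductive pair $({\mathfrak g}={\mathfrak h}\oplus{\mathfrak m}_{1},\,A\in{\mathfrak h})$ for $\tilde M$ yields $\tilde V'(0)=[A,u]\neq 0$, hence $\tilde V$ is a nonzero Jacobi field.

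Putting the three facts together, $\tilde V$ is a nonzero $\Lie{G}$-isotropic Jacobi field along $\tilde\gamma_{u}$ vanishing at $t=0$ and $t=t_{0}$, so $\tilde\gamma_{u}(t_{0})$ is $\Lie{G}$-isotropically conjugate to $\tilde o$. The only delicate point is the non-triviality of $\tilde V$, and this is exactly handled by the observation that the derivative $[A,u]$ already lies in the horizontal subspace ${\mathfrak m}_{1}$, so $\pi_{*}$ preserves its norm; everything else is a direct translation through the equivariance of $\pi$.
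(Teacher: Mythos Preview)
Your proof is correct and follows essentially the same route as the paper's: both take $A\in{\mathfrak k}$ with $V=A^{*}\comp\gamma_{u}$, push it forward via the $\Lie{G}$-equivariance of $\pi$ to obtain $\tilde V=A^{*}\comp\tilde\gamma_{u}$, and then appeal to (\ref{ini}) for non-triviality. Your version is in fact slightly more explicit on the last point, spelling out that $[A,u]\in{\mathfrak m}_{1}$ because ${\mathfrak m}_{1}$ is ${\mathrm A}{\mathrm d}(\Lie{H})$-invariant, whereas the paper simply invokes (\ref{ini}) without elaboration.
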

\begin{proof} Let $V$ be a non-zero $G$-isotropic Jacobi field along $\gamma_{u}$ with $V(0)=0.$ Then there exists $A\in {\mathfrak k}$ such that $V = A^{*}\comp \gamma_{u}.$ Putting  $\tilde{V} = \pi_{*\gamma_{u}}V,$ one gets
\[
\tilde{V}(t) = \pi_{*\gamma_{u}(t)}\frac{d}{ds}_{\mid s =0}(\exp sA)\gamma_{u}(t) = \frac{d}{ds}_{\mid s=0}\pi((\exp sA)\gamma_{u}(t)) = \frac{d}{ds}_{\mid s=0}(\exp sA)\tilde{\gamma}_{u}(t) = A^{*}_{\tilde{\gamma}_{u}}.
\]
Hence, $\tilde{V}$ is a $G$-isotropic Jacobi field along $\tilde{\gamma}_{u},$ which using (\ref{ini}) is not identically zero and if $V(t_{0}) = 0,$ for some $t_{0},$ then $\tilde{V}(t_{0}) = 0.$
\end{proof}

Let $\mathcal{S}\subset {\mathfrak m}$ be the unit sphere of ${\mathfrak m}= {\mathfrak k}^{\bot}$ and let ${\mathfrak n}\subset {\mathfrak m}$ be an ${\mathrm A}{\mathrm d}(\Lie{K})$-invariant subspace of ${\mathfrak m}.$ Since $\langle\cdot,\cdot\rangle$ is ${\mathrm A}{\mathrm d}(\Lie{K})$-invariant, the linear isotropy action can be restricted to ${\mathcal S}\cap{\mathfrak n}.$

\begin{proposition}\label{ptrans} The following conditions are equivalent:
\begin{enumerate}
\item[{\rm (i)}] the linear isotropy action of $\Lie{K}$ on ${\mathcal S}\cap {\mathfrak n}$ is transitive;
\item[{\rm (ii)}] $[u,v]_{\mathfrak k}\neq 0$ for all linearly independent $u,v\in {\mathfrak n};$
\item[{\rm (iii)}] there exists $u\in{\mathfrak n}\setminus \{0\}$ such that $[u,v]_{\mathfrak k}\neq 0$ for all $v\in {\mathfrak n}$ linearly independent to $u.$
\end{enumerate}
\end{proposition}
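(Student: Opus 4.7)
My plan is to prove the circular chain (i)$\Rightarrow$(ii)$\Rightarrow$(iii)$\Rightarrow$(i), where the middle implication is immediate by picking any nonzero $u\in{\mathfrak n}$. The whole argument rests on two facts: since $\mathfrak n$ is ${\mathrm A}{\mathrm d}(\Lie K)$-invariant, $[{\mathfrak k},{\mathfrak n}]\subset {\mathfrak n}$; and since $\langle\cdot,\cdot\rangle$ is bi-invariant, $\langle [A,u],u\rangle=0$ and $\langle [u,v],A\rangle=-\langle v,[u,A]\rangle=\langle v,[A,u]\rangle$ for all $A\in{\mathfrak k}$ and $u,v\in{\mathfrak m}$. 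In particular, $[{\mathfrak k},u]\subset u^{\perp}\cap{\mathfrak n}$ and $[u,v]_{\mathfrak k}=0$ if and only if $v$ is $\langle\cdot,\cdot\rangle$-orthogonal to $[{\mathfrak k},u]$.

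For (i)$\Rightarrow$(ii), I fix linearly independent $u,v\in{\mathfrak n}$ and, after scaling, assume $u\in{\mathcal S}\cap{\mathfrak n}$ and write $v=\alpha u+v'$ with $v'\in u^{\perp}$, $v'\neq 0$. Transitivity of the $\Lie K$-action on ${\mathcal S}\cap{\mathfrak n}$ makes the orbit through $u$ open in this sphere, so its tangent space at $u$, which is $[{\mathfrak k},u]$, must exhaust $u^{\perp}\cap{\mathfrak n}$. Hence $v'=[A,u]$ for some $A\in{\mathfrak k}$, and by bi-invariance
\[
\langle [u,v]_{\mathfrak k},A\rangle=\langle [u,v],A\rangle=\langle v,[A,u]\rangle=\langle v,v'\rangle=\|v'\|^{2}>0,
\]
so $[u,v]_{\mathfrak k}\neq 0$.

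For (iii)$\Rightarrow$(i), I take the unit vector $u$ provided by (iii) and set $W:=[{\mathfrak k},u]\subset u^{\perp}\cap{\mathfrak n}$. If $W$ were a proper subspace of $u^{\perp}\cap{\mathfrak n}$, there would exist $v'\in u^{\perp}\cap{\mathfrak n}$, $v'\neq 0$, orthogonal to $W$; but then $\langle[u,v'],A\rangle=\langle v',[A,u]\rangle=0$ for every $A\in{\mathfrak k}$, so $[u,v']_{\mathfrak k}=0$, contradicting the hypothesis on $u$ because $v'$ is linearly independent of $u$. Therefore $W=u^{\perp}\cap{\mathfrak n}$, so the $\Lie K$-orbit of $u$ is open in ${\mathcal S}\cap{\mathfrak n}$; since $\Lie K$ is compact the orbit is also closed, and the connectedness of ${\mathcal S}\cap{\mathfrak n}$ (which I tacitly assume, i.e.\ $\dim{\mathfrak n}\geq 2$; the case $\dim{\mathfrak n}=1$ makes (ii) and (iii) vacuous and needs a separate comment) forces the orbit to be all of ${\mathcal S}\cap{\mathfrak n}$.

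The only genuine subtlety is passing from the infinitesimal statement ``$[{\mathfrak k},u]$ is as big as possible'' to global transitivity; this is standard but requires connectedness of the sphere ${\mathcal S}\cap{\mathfrak n}$ and compactness of $\Lie K$, both of which are available in our setting. Aside from that, the whole proposition is really just the bi-invariance identity $\langle [u,v]_{\mathfrak k},A\rangle=\langle v,[A,u]\rangle$ packaged as an orthogonal-complement/duality argument.
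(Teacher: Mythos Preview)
Your proof is correct and follows essentially the same route as the paper: both identify $T_{u}(\Lie K\cdot u)=[{\mathfrak k},u]$ and then use the bi-invariance identity $\langle[u,v],A\rangle=\langle v,[A,u]\rangle$ to rewrite the orthogonal complement of $[{\mathfrak k},u]$ in ${\mathfrak n}$ as $\{v\in{\mathfrak n}:[u,v]_{\mathfrak k}=0\}$. The paper compresses this into a single biconditional (i)$\Leftrightarrow$(iii), whereas you unfold it into the cycle (i)$\Rightarrow$(ii)$\Rightarrow$(iii)$\Rightarrow$(i) and make explicit the open--closed--connected passage (compactness of $\Lie K$, connectedness of ${\mathcal S}\cap{\mathfrak n}$) that the paper uses tacitly.
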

\begin{proof} Because $\frac{d}{dt}_{\mid t =0}{\mathrm A}{\mathrm d}_{\exp tZ}u = \frac{d}{dt}_{\mid t = 0}(e^{t{\rm ad} Z})u = [Z,u],$ for all $Z\in {\mathfrak k}$ and $u\in {\mathfrak m},$ it follows that the tangent space $T_{u}(\Lie{K}\cdot u)$ of the orbit $\Lie{K}\cdot u = {\mathrm Ad}(\Lie{K})u$ is the bracket $[{\mathfrak k},u].$ Hence the action $\Lie{K}\times ({\mathcal S}\cap{\mathfrak n})\to {\mathcal S}\cap{\mathfrak n}$ is transitive if and only if, for some $u\in {\mathcal S}\cap{\mathfrak n},$ the orthogonal complement ${\mathfrak n}_{1}$ of $[{\mathfrak k},u]$ in ${\mathfrak n}$ is generated by $u.$ But from (\ref{tt2}) we have ${\mathfrak n}_{1} = \{v\in {\mathfrak n}\mid \langle [{\mathfrak k},u],v\rangle = 0\} = \{v\in {\mathfrak n}\mid \langle [u,v],{\mathfrak k}\rangle = 0 \} = \{v\in {\mathfrak n}\mid [u,v]_{\mathfrak k} = 0\}$ and it proves the result.
\end{proof}

\noindent As a direct consequence, using Lemma \ref{lrank}, one obtains the following well-known result.
\begin{corollary} Let $M = \Lie{G}/\Lie{K}$ be a symmetric space, $(\Lie{G},\Lie{K})$ a Riemannian symmetric pair and $\Lie{G}$ semi-simple. Then the linear isotropy action of $\Lie{K}$ on the unit sphere of ${\mathfrak m}$ is transitive if and only if ${\mathrm rank}\;M =1.$
\end{corollary}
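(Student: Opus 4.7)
The plan is to apply Proposition \ref{ptrans} to ${\mathfrak n} = {\mathfrak m}$ and then combine the outcome with Lemma \ref{lrank}, using the defining property $[{\mathfrak m},{\mathfrak m}]\subset {\mathfrak k}$ of a symmetric pair.

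First I would note that because $\Lie{G}$ is semi-simple, the negative of the Killing--Cartan form of ${\mathfrak g}$ (in the compact case, and a sign-adjusted version in general) provides an ${\mathrm A}{\mathrm d}(\Lie{G})$-invariant inner product whose restriction to ${\mathfrak m} = {\mathfrak k}^{\bot}$ induces the Riemannian symmetric metric. Hence $(M,g)$ is standard normal homogeneous, so both the identity (\ref{tt2}) and the hypotheses of Proposition \ref{ptrans} and Lemma \ref{lrank} are available.

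Next, apply Proposition \ref{ptrans} with the ${\mathrm A}{\mathrm d}(\Lie{K})$-invariant subspace ${\mathfrak n}={\mathfrak m}$: transitivity of the linear isotropy action of $\Lie{K}$ on the unit sphere ${\mathcal S}\subset {\mathfrak m}$ is equivalent to $[u,v]_{\mathfrak k}\neq 0$ for all linearly independent $u,v\in {\mathfrak m}$. Since $(\Lie{G},\Lie{K})$ is a symmetric pair, $[{\mathfrak m},{\mathfrak m}]\subset {\mathfrak k}$, so that $[u,v]_{\mathfrak k} = [u,v]$ for $u,v\in {\mathfrak m}$, and the condition collapses to $[u,v]\neq 0$ for all linearly independent $u,v\in {\mathfrak m}$. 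Finally, by Lemma \ref{lrank} applied to the normal homogeneous space $(M,g)$, this last condition is equivalent to $\mathrm{rank}\,(M,g) = 1$, which finishes the equivalence.

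The only nontrivial point is matching the two notions of rank: the paper defines rank as the maximal dimension of a flat, totally geodesic submanifold, which (for symmetric spaces) coincides with the maximal dimension of an abelian subspace of ${\mathfrak m}$, since flat Lie triple systems in a symmetric space are precisely the abelian subspaces of ${\mathfrak m}$. Once this identification is made, the chain of equivalences above is immediate and the proof reduces to the one-line combination ``Proposition \ref{ptrans} $+$ $[{\mathfrak m},{\mathfrak m}]\subset {\mathfrak k}$ $+$ Lemma \ref{lrank}''.
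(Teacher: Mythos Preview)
Your proof is correct and follows exactly the paper's approach: the paper states this corollary as a ``direct consequence, using Lemma \ref{lrank}'' of Proposition \ref{ptrans}, and you have simply spelled out that one-line argument (transitivity $\Leftrightarrow [u,v]_{\mathfrak k}\neq 0$ $\Leftrightarrow [u,v]\neq 0$ via $[{\mathfrak m},{\mathfrak m}]\subset{\mathfrak k}$ $\Leftrightarrow$ rank one). Your remarks on semi-simplicity and the rank notion are sound elaborations but not additional ideas.
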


Each unit vector $u\in {\mathfrak m}= {\mathfrak m}_{0}\oplus {\mathfrak m}_{1}$ can be written as
\begin{equation}\label{theta}
u = u(\theta) = \cos \theta u_{0} + \sin \theta u_{1},
\end{equation}
where $u_{0}\in {\mathfrak m}_{0}$ and $u_{1}\in {\mathfrak m}_{1},$ $\|u_{0}\|= \|u_{1}\| =1,$ and $\theta$ is the slope angle $\theta = {\mathrm ang}(u,{\mathfrak m}_{0})\in [0,\pi/2].$ From Lemma \ref{isotropic}, if the linear isotropy action of $\Lie{K}$ on ${\mathcal S}\cap {\mathfrak m}_{1}$ is transitive the study of conjugate points along geodesics on normal homogeneous spaces $(M = G/K,g)$ can be reduced to consider geodesics $\gamma_{u}$ where $u$ is given as in (\ref{theta}) but fixing an arbitrary unit vector $u_{1}\in {\mathfrak m}_{1}.$ Using this fact and Theorem \ref{cp}, together with Lemma \ref{lsubmersion}, we shall find isotropic and non-isotropic conjugate points to the origin in geodesics starting at that point.
\vspace{0.2cm}
\subsection{$(S^{2m+1},g_{\kappa,s}),$  $(S^{4m+3},g_{\kappa,s}),$  $(s<1),$ {\sc and} $({\mathbb C}P^{2m+1},g_{\kappa})$}\indent

\vspace{0.2cm}

Denote by $S^{m}(\kappa)$ the sphere of radius $\frac{1}{\sqrt{\kappa}},$ or equivalently, of constant curvature $\kappa>0,$ by $\RR P^{m}(\kappa)$ the real projective space of constant curvature $\kappa,$ by ${\mathbb C}P^{m}(\kappa)$ the complex projective space with constant holomorphic sectional curvature $c=4\kappa$ and by ${\mathbb H}P^{m}(\kappa)$ the quaternionic projective space with constant quaternionic sectional curvature $c = 4\kappa$ (the sectional curvature varies between $\kappa$ and $4\kappa).$

In order to study conjugate points along geodesics in $(S^{2m+1},g_{\kappa,s} = \frac{1}{\kappa}g_{s}),$ $(S^{4m+3},g_{\kappa,s}= \frac{1}{\kappa}g_{s})$ and $({\mathbb C}P^{2m+1},g_{\kappa} = \frac{1}{\kappa}g),$ we first need to determine the reductive decompositions ${\mathfrak g} = {\mathfrak k}\oplus {\mathfrak m}_{s}$ (they only depend on the parameter $s)$ associated to the corresponding normal homogeneous quotients. For $s = 1,$ including in this case $({\mathbb C}P^{2m+1},g_{\kappa}),$ such reductive decompositions have already been obtained in the previous section.

We start considering Berger spheres $(S^{2m+1} = \Lie{SU}(m+1)/\Lie{SU}(m),g_{s}),$ $s<1.$ Put $S_{j} = \frac{1}{\alpha_{j}}\sum_{l=1}^{j}lA_{l,l+1},$ $j=1,\dots ,m,$ where $\alpha_{j} = (\frac{j(j+1)}{2})^{1/2}.$ Then an orthonormal basis of ${\mathfrak s}{\mathfrak u}(m+1)$ with respect to the inner product $<X,Y> = -\frac{1}{2}{\rm trace}\; XY,$ is given by $\{A_{l,l+1},\;l=1,\dots ,m;\; B_{rj},\; C_{rj};\; 1\leq r< j\leq m+1\}$  and $\{S_{j}, j = 1,\dots ,m-1;\; B_{rj},\; C_{rj};\; 1\leq r< j\leq m\}$ is an orthonormal basis of ${\mathfrak s}{\mathfrak u}(m)$ embedded in the usual way in ${\mathfrak s}{\mathfrak u}(m+1).$ Moreover, up to sign, $Z_{0}= S_{m},$ where $Z_{0}$ is defined as in proof of Proposition \ref{pmean2}. An orthonormal basis for ${\mathfrak m}_{1}\cong {\mathbb C}^{m}$ is given by
\[
e_{r} = B_{r,m+1},\;\;\; f_{r} = C_{r,m+1},\;\;r= 1,\dots ,m.
\]
Let $D$ be a basis element of the Lie algebra $\RR$ of $S^{1}\cong \Lie{U}(1)$ and consider the coset space $G/K,$ where $\Lie{G} = \Lie{SU}(m+1)\times \Lie{U}(1)$ and $\Lie{K}$ is the connected Lie subgroup of $\Lie{G}$ with Lie algebra ${\mathfrak k}= {\mathfrak s}{\mathfrak u}(m)\oplus \RR$ generated by
\[
\{h_{s} = \sqrt{1-s}(Z_{0} + D), S_{1},\dots ,S_{m-1}; B_{rj}, C_{rj}, 1\leq r<j\leq m\}.
\]
Then ${\mathfrak g} = {\mathfrak k} \oplus {\mathfrak m}_{s}$ is a reductive decomposition for $\Lie{G}/\Lie{K}$ being ${\mathfrak m}_{s} = {{\mathfrak m}_{0}}_{s}\oplus {\mathfrak m}_{1},$ ${{\mathfrak m}_{0}}_{s} = \RR d_{s}$ and
\[
d_{s} = \sqrt{s}(Z_{0} + \frac{s-1}{s}D).
\]
Using (\ref{bracABC}), the Lie brackets $[{\mathfrak m}_{s},{\mathfrak m}_{s}]$ are determined by
\begin{equation}\label{brac1}
\begin{array}{l}
[d_{s},e_{r}] = \sqrt{s}\frac{m+1}{\alpha_{m}}f_{r},\;\;\; [d_{s},f_{r}] = -\sqrt{s}\frac{m+1}{\alpha_{m}}e_{r},\\[0.4pc]
[e_{r},e_{j}] = [f_{r},f_{j}] = -B_{rj},\;\;[e_{r},f_{j}] = C_{rj},\;\;\;\mbox{if}\; r\neq j,\\[0.4pc]
[e_{r},f_{r}] = \frac{m+1}{\alpha_{m}}(\sqrt{s}d_{s} + \sqrt{(1-s)}h_{s}) + \frac{1-r}{\alpha_{r-1}}S_{r-1} + \frac{1}{\alpha_{r}}S_{r} + \dots + \frac{1}{\alpha_{m-1}}S_{m-1}.
\end{array}
\end{equation}
Moreover, the inner product on ${\mathfrak s}{\mathfrak u}(m +1)\oplus \RR$ making
\[
\{h_{s},S_{1},\dots ,S_{n-1};B_{rj},C_{rj},\;1\leq r<j\leq m;\; d_{s},e_{r},f_{r},\; 1\leq r\leq m\}
\]
an orthonormal basis determines an bi-invariant metric on $SU(m+1)\times \Lie{U}(1)$ and makes $S^{2m+1}$ in a $SU(m+1)\times \Lie{U}(1)$-normal homogeneous space isometric to $(S^{2m+1},g_{s})$ \cite{Z}. Since $[d_{s},{\mathfrak k}] = 0,$ $d_{s}$ is ${\mathrm A}{\mathrm d}(\Lie{S}(\Lie{U}(m)\times \Lie{U}(1)))$-invariant on ${\mathfrak m}_{s}$ and so, it determines a $G$-invariant unit vector field $\xi$ on $S^{2m+1},$ the {\em Hopf vector field}. Because $\xi$ is a Killing vector field, every geodesic $\gamma_{u}$ on $(S^{2m+1},g_{s})$ intersects each fiber with a constant slope angle $\theta = {\mathrm ang}(\xi_{o},u)\in ]0,\pi[.$

For $(S^{4m+3}= \Lie{Sp}(m+1)/\Lie{Sp}(m),g_{s}),$ $s< 1,$ let $\{D_{p}\}_{p=1,2,3}$ be the standard basis of ${\mathfrak s}{\mathfrak p}(1)\cong {\mathfrak s}{\mathfrak u}(2)$ given by $D_{1} = A_{1,2},$ $D_{2} = B_{1,2}$ and $D_{3} = C_{1,2}.$ Put
\begin{equation}\label{dh}
{d_{p}}_{s} = \sqrt{2s}(X_{p} + \frac{s-1}{s}D_{p}),\;\;\;\; {h_{p}}_{s} = \sqrt{2(1-s)}(X_{p} + D_{p}).
\end{equation}
Then ${\mathfrak s}{\mathfrak p}(m+1)\oplus {\mathfrak s}{\mathfrak p}(1) = {\mathfrak k}\oplus {\mathfrak m}_{s}$ is a reductive decomposition associated to the quotient space $\Lie{Sp}(m+1)\times\Lie{Sp}(1)/\Lie{K},$ where $\Lie{K}$ is the connected Lie subgroup of $\Lie{Sp}(m+1)\times\Lie{Sp}(1)$ with Lie algebra ${\mathfrak k} = {\mathfrak s}{\mathfrak p}(m)\oplus {\mathfrak s}{\mathfrak p}(1)$ generated by $\{Z_{\alpha,\beta},Z_{\alpha p},Z_{(\alpha,\beta) p};{h_{p}}_{s}\},$ for $p = 1,2,3$ and $1\leq \alpha <\beta \leq m,$ and ${\mathfrak m}_{s}= {{\mathfrak m}_{0}}_{s}\oplus {\mathfrak m}_{1}$ the vector space with adapted basis $\{{d_{p}}_{s};\;Y_{\alpha},Y_{\alpha p}\}.$ Using (\ref{bracS}) and (\ref{bracS1}), the inner product on ${\mathfrak s}{\mathfrak p}(m+1)\oplus {\mathfrak s}{\mathfrak p}(1)$ making these vectors an orthonormal basis is bi-invariant and, in a similar way than for Berger spheres, $S^{4m+3}$ becomes into a $\Lie{Sp}(m+1)\times \Lie{Sp}(1)$-normal homogeneous space isometric to $(S^{4m+3},g_{s}).$

For each $\kappa >0,$ using the well-known O'Neill formula (\cite[Ch. 9]{B}) and (\ref{curnor}), (\ref{bracS}) and (\ref{brac1}), we have the following homogeneous Riemannian fibrations:
\begin{enumerate}
\item[{\rm (i)}] $S^{1}\to (S^{2m+1}, g_{\kappa ,s} = \frac{1}{\kappa}g_{s}) \to {\mathbb C}P^{m}(\kappa);$
\item[{\rm (ii)}] $S^{2}\to ({\mathbb C}P^{2m+1}, g_{\kappa} = \frac{1}{\kappa}g)\to {\mathbb H}P^{m}(\kappa);$
\item[{\rm (iii)}] $S^{3}\to (S^{4m+3},g_{\kappa,s} = \frac{1}{\kappa}g_{s}) \to {\mathbb H}P^{m}(\kappa).$
\end{enumerate}
\begin{remark}\label{rsymmetric}{\rm On compact symmetric spaces, the conjugate points to the origin along any geodesic $\gamma_{u}$ are strictly isotropic and they are given by $\gamma_{u}(\frac{p\pi}{\sqrt{\lambda}}),$ $p\in {\mathbb Z},$ where $\lambda$ is a eigenvalue of $R_{u}.$ In particular, on $S^{n}(\kappa)$ or $\RR P^{n}(\kappa)$ the conjugate points to the origin are all $\gamma_{u}(\frac{p\pi}{\sqrt{\kappa}})$ and $\gamma_{u}(\frac{p\pi}{2\sqrt{\kappa}})$ on ${\mathbb K}P^{n}(\kappa),$ for ${\mathbb K} = {\mathbb C},$ ${\mathbb H}$ or ${\mathbb C}a.$
}
\end{remark}

Now, let ${\mathfrak g} = {\mathfrak k}\oplus {\mathfrak m}_{s},$ $s\leq 1,$ be any of the reductive decompositions previously obtained and let ${\mathfrak m}_{s} = {{\mathfrak m}_{0}}_{s}\oplus {\mathfrak m}_{1}$ be the ${\mathrm A}{\mathrm d}(\Lie{K})$-invariant orthogonal decomposition of ${\mathfrak m}_{s},$ where ${{\mathfrak m}_{0}}_{s}$ is the L.t.s. subspace of ${\mathfrak m}_{s}$ associated to the fibers as totally geodesic submanifolds.
\begin{lemma}\label{ltran} For $(S^{2m+1},g_{\kappa,s}),$ $({\mathbb C}P^{2m+1},g_{\kappa})$ and $(S^{4m+3},g_{\kappa,s}),$ the linear isotropy action on ${\mathcal S}\cap {\mathfrak m}_{1}$ is transitive except for the Euclidean sphere $(S^{3},g_{\kappa,1}).$ Moreover, it is transitive on ${\mathcal S}\cap {{\mathfrak m}_{0}}_{s}$ except for $(S^{4m+3},g_{\kappa,1}).$
\end{lemma}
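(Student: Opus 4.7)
The plan is to apply Proposition \ref{ptrans}: in each case I would exhibit a vector $u$ in the relevant subspace $\mathfrak{n}\in\{{\mathfrak m}_{1},{{\mathfrak m}_{0}}_{s}\}$ for which $[u,v]_{\mathfrak k}\neq 0$ whenever $v\in\mathfrak{n}$ is linearly independent of $u$, thereby invoking the equivalence (i)$\Leftrightarrow$(iii) of that proposition.

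For $\mathcal{S}\cap{\mathfrak m}_{1}$ I would treat the three families separately, taking $u=e_{1}$ for the Berger sphere $(S^{2m+1},g_{\kappa,s})$ and $u=Y_{1}$ for $({\mathbb C}P^{2m+1},g_{\kappa})$ and $(S^{4m+3},g_{\kappa,s})$. Using the multiplication tables (\ref{brac1}), (\ref{bracS}), (\ref{bracS1}) and the explicit form of ${\mathfrak k}$ attached to each reductive decomposition, I would compute $[u,w]_{\mathfrak k}$ as $w$ runs through the adapted basis of ${\mathfrak m}_{1}$: each ``off-diagonal'' bracket contributes a linearly independent element of $\mathfrak{su}(m)$ or $\mathfrak{sp}(m)\subset{\mathfrak k}$ (the $B_{1j}$, $C_{1j}$ in the first family, and the $Z_{1\alpha}$, $Z_{(1,\alpha)p}$ in the other two), while the ``diagonal'' brackets $[e_{1},f_{1}]$ or $[Y_{1},Y_{1p}]$ carry an additional contribution involving either some $S_{j}$ (present as soon as $m\geq 2$) or the elements $h_{s}$, resp.\ ${h_{p}}_{s}$ (present as soon as $s<1$). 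These two contributions vanish simultaneously only in the Berger-sphere family with $m=1$ and $s=1$, which is precisely the Euclidean $S^{3}$.

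For $\mathcal{S}\cap{{\mathfrak m}_{0}}_{s}$ the Berger-sphere case is vacuous, since ${{\mathfrak m}_{0}}_{s}=\RR d_{s}$ is one-dimensional and condition (ii) of Proposition \ref{ptrans} contains no pairs to test. For $({\mathbb C}P^{2m+1},g_{\kappa})$ the subspace ${\mathfrak m}_{0}$ is spanned by $X_{2},X_{3}$, and (\ref{bracS}) gives $[X_{2},X_{3}]=2X_{1}\in{\mathfrak u}(1)\subset{\mathfrak k}$, which settles the claim. In the decisive case $(S^{4m+3},g_{\kappa,s})$ I would use (\ref{dh}) and the fact that the $\Lie{Sp}(m+1)$- and $\Lie{Sp}(1)$-factors commute to expand $[{d_{p}}_{s},{d_{q}}_{s}]$ as a linear combination of $[X_{p},X_{q}]=2X_{r}$ and $[D_{p},D_{q}]=2D_{r}$, and then invert the change of basis to re-express $X_{r}$ and $D_{r}$ in terms of $\{{d_{r}}_{s},{h_{r}}_{s}\}$. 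The resulting expression gives the ${\mathfrak k}$-component of $[{d_{p}}_{s},{d_{q}}_{s}]$ as a nonzero scalar multiple of $\sqrt{1-s}\,{h_{r}}_{s}$, which vanishes exactly when $s=1$, recovering the stated exception.

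The main bookkeeping obstacle lies in this last step, where one must explicitly decompose $X_{r}$ and $D_{r}$ along ${d_{r}}_{s}$ and ${h_{r}}_{s}$ and track the factor $\sqrt{1-s}$ precisely; everything else reduces to a direct application of characterization (iii) of Proposition \ref{ptrans}.
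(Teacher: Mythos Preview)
Your plan is correct and follows essentially the same route as the paper: both arguments feed explicit bracket computations from the tables (\ref{brac1}), (\ref{bracS}), (\ref{bracS1}) into criterion (iii) of Proposition~\ref{ptrans}. The only cosmetic differences are that the paper picks $u=e_{m}$ rather than $e_{1}$ for the Berger-sphere family (so that the diagonal bracket $[e_{m},f_{m}]_{\mathfrak k}$ reduces to the single term $\frac{1-m}{\alpha_{m-1}}S_{m-1}$ besides $h_{s}$), treats $(S^{3},g_{\kappa,s})$ as a separate line, and for ${{\mathfrak m}_{0}}_{s}$ on $S^{4m+3}$ simply records the outcome $[{d_{1}}_{s},x_{s}]_{\mathfrak k}=2\sqrt{2(1-s)}\,(x^{2}_{s}{h_{3}}_{s}-x^{3}_{s}{h_{2}}_{s})$ without detailing the change of basis you outline.
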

\begin{proof} Given $v = \sum_{r=1}^{m}(v_{1}^{r}e_{r} + v_{2}^{r}f_{r})\in {\mathfrak m}_{1}\cong {\mathbb C}^{m},$ (\ref{brac1}) implies that
\[
[e_{m},v]_{{\mathfrak s}{\mathfrak u}(m)} = \sum_{r=1}^{m-1}(v^{r}_{1}B_{rm} + v^{r}_{2}C_{rm}) - v^{m}_{2}\frac{m-1}{\alpha_{m-1}}S_{m-1}
\]
and, putting $v = \sum_{\alpha =1\atop p = 1,2,3}^{m}(v^{\alpha}Y_{\alpha} + v^{\alpha p}Y_{\alpha p})\in {\mathfrak m}_{1}\cong {\mathbb H}^{m},$ it follows from (\ref{bracS}), (\ref{bracS1})
\[
[Y_{1},v]_{{\mathfrak s}{\mathfrak p}(m)} = \sum_{\alpha =2}^{m}(-v^{\alpha}Z_{1,\alpha} + \sum_{p=1,2,3}v^{\alpha p}Z_{(1,\alpha)p}) + 2\sum_{p=1,2,3}v^{1 p}Z_{1 p}.
\]
Then $[e_{m},v]_{{\mathfrak s}{\mathfrak u}(m)},$ for $m>1,$ or $[Y_{1},v]_{{\mathfrak s}{\mathfrak p}(m)}$ is zero if and only if $u$ is collinear to $e_{m}$ or $v$ to $Y_{1}.$ For $(S^{3},g_{\kappa,s}),$ we obtain $[e_{1},v]_{\mathfrak k} = 2\sqrt{1-s}v^{1}_{2}h_{s}.$ Hence, using Proposition \ref{ptrans}, the linear isotropy action on ${\mathcal S}\cap{\mathfrak m}_{1}$ is transitive in each case. Also the linear isotropy action on ${\mathcal S}\cap {{\mathfrak m}_{0}}_{s}$ is transitive for $(S^{2m+1},g_{\kappa,s})$ and $({\mathbb C}P^{2m+1},g_{\kappa}).$ On $(S^{4m+3},g_{\kappa,s}),$ taking $x_{s} = \sum_{p=1}^{3}x_{s}^{p}{d_{p}}_{s}$ one gets
\[
[{d_{1}}_{s}, x_{s}]_{\mathfrak k} = 2\sqrt{2(1-s)}(x^{2}_{s}{h_{3}}_{s} - x^{3}_{s}{h_{2}}_{s}).
\]
Hence, using again Proposition \ref{ptrans}, the last part of the lemma is proved.
\end{proof}

\noindent Using (\ref{curnor}) and (\ref{bracS}) and (\ref{brac1}), it follows
\begin{lemma}\label{tau} The sectional curvature $K(u_{0},u_{1}),$ for all $u_{0}\in {{\mathfrak m}_{0}}_{s}$ and $u_{1}\in {\mathfrak m}_{1},$ is a function $\tau= \tau(\kappa, s) = \frac{\kappa s(m+1)}{2m}$ on $(S^{2m+1}, g_{\kappa ,s}),$ $\tau = \frac{\kappa}{2}$ on $({\mathbb C}P^{2m+1}, g_{\kappa})$ and $\tau = \frac{\kappa s}{2}$ on $(S^{4m+3},g_{\kappa,s}).$
\end{lemma}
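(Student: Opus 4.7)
The strategy is to reduce the computation to evaluating $\|[u_{0}, u_{1}]\|^{2}$ by means of the normal-homogeneous curvature formula (\ref{curnor}). Since ${{\mathfrak m}_{0}}_{s} \perp {\mathfrak m}_{1}$, we may take $u_{0}$, $u_{1}$ orthonormal with respect to the base bi-invariant inner product $\langle\cdot,\cdot\rangle$ underlying $g$ or $g_{s}$, so that
\[
K(u_{0}, u_{1}) = \|[u_{0}, u_{1}]_{\mathfrak k}\|^{2} + \tfrac{1}{4}\|[u_{0}, u_{1}]_{\mathfrak m}\|^{2}.
\]
The key observation is that in each of the three cases $[u_{0}, u_{1}]$ already lies in ${\mathfrak m}_{1}$, so the ${\mathfrak k}$-component vanishes and $K(u_{0}, u_{1}) = \tfrac{1}{4}\|[u_{0}, u_{1}]\|^{2}$. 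For the Berger sphere $(S^{2m+1}, g_{\kappa,s})$ this is immediate from (\ref{brac1}): $[d_{s}, e_{r}]$ and $[d_{s}, f_{r}]$ are scalar multiples of $f_{r}$ and $e_{r}$ respectively. For $({\mathbb C}P^{2m+1}, g_{\kappa})$ and $(S^{4m+3}, g_{\kappa,s})$ it follows from (\ref{bracS}): the brackets $[X_{p}, Y_{\alpha}]$, $[X_{p}, Y_{\alpha p}]$ and $[X_{p}, Y_{\alpha q}]$ all lie in ${\mathfrak m}_{1}$; in the Hopf extension case (iii) the extra summand $\tfrac{s-1}{s}D_{p}$ of ${d_{p}}_{s}$ commutes with the whole of ${\mathfrak m}_{1}\subset {\mathfrak s}{\mathfrak p}(m+1)$ across the direct sum ${\mathfrak s}{\mathfrak p}(m+1)\oplus {\mathfrak s}{\mathfrak p}(1)$ and so contributes nothing.

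By Lemma \ref{ltran} the isotropy acts transitively on the unit sphere of ${{\mathfrak m}_{0}}_{s}$ and preserves ${\mathfrak m}_{1}$, so we may reduce to a preferred unit vector --- $d_{s}$ in case (i), $\sqrt{2}\,X_{2}$ in the ${\mathbb C}P^{2m+1}$ setting, and ${d_{1}}_{s}$ in case (iii) --- and verify that $\mathrm{ad}(u_{0})\colon {\mathfrak m}_{1}\to {\mathfrak m}_{1}$ is a scalar multiple of a norm-preserving transformation. Concretely, $\mathrm{ad}(Z_{0})$ is a rotation by $\pi/2$ on each two-plane $\{e_{r}, f_{r}\}\subset {\mathfrak m}_{1}\cong {\mathbb C}^{m}$, so that $\mathrm{ad}(d_{s})$ acts on ${\mathfrak m}_{1}$ by the scalar $\sqrt{s}\,\tfrac{m+1}{\alpha_{m}}$; whereas $\mathrm{ad}(X_{p})$ acts on each quaternionic block $\{Y_{\alpha}, Y_{\alpha 1}, Y_{\alpha 2}, Y_{\alpha 3}\}$ as left multiplication by the unit imaginary associated with $X_{p}$, hence isometrically, yielding the scalars $\sqrt{2}$ for $\sqrt{2}\,X_{2}$ and $\sqrt{2s}$ for ${d_{1}}_{s}$. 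Squaring these scalars produces $\|[u_{0}, u_{1}]\|^{2} = \tfrac{2s(m+1)}{m}$, $2$, $2s$ respectively, independently of the unit vector $u_{1}\in {\mathfrak m}_{1}$.

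Consequently $K(u_{0}, u_{1})$ equals $\tfrac{s(m+1)}{2m}$, $\tfrac{1}{2}$, $\tfrac{s}{2}$ under the base metrics $g_{s}$, $g$, $g_{s}$ in the three cases. The homothetic rescaling $g_{\kappa,s} = \tfrac{1}{\kappa}g_{s}$ (and similarly $g_{\kappa}$) multiplies sectional curvatures by $\kappa$, producing the stated values of $\tau$. The only non-routine step is verifying the norm-preservation and exact scalar factor of $\mathrm{ad}(u_{0})$ on ${\mathfrak m}_{1}$, which reflects the Hopf-type complex or quaternionic geometry of the fibres of the Riemannian submersions from Section~3; the remainder is a careful bookkeeping of $\alpha_{m} = \sqrt{m(m+1)/2}$ together with the scaling factors $\sqrt{s}$ and $\sqrt{2s}$ built into $d_{s}$ and ${d_{p}}_{s}$.
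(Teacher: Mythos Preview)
Your approach is the paper's own --- it too just cites (\ref{curnor}) together with the bracket tables (\ref{bracS}) and (\ref{brac1}) --- but you supply the structure the paper leaves implicit: that $[{{\mathfrak m}_{0}}_{s},{\mathfrak m}_{1}]\subset {\mathfrak m}_{1}$, so only the ${\mathfrak m}$-term in (\ref{curnor}) survives, and that $\mathrm{ad}(u_{0})|_{{\mathfrak m}_{1}}$ is a fixed scalar times an isometry.

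There is one small gap. You appeal to Lemma~\ref{ltran} for transitivity of the isotropy action on the unit sphere of ${{\mathfrak m}_{0}}_{s}$ in order to reduce to a single $u_{0}$, but that lemma explicitly excludes $(S^{4m+3},g_{\kappa,1})$: for $s=1$ the group is $\Lie{Sp}(m+1)$ with isotropy $\Lie{Sp}(m)$, which acts trivially on ${\mathfrak m}_{0}$. The repair is already contained in your quaternionic remark. Each $\mathrm{ad}(X_{p})$ acts on the block $\{Y_{\alpha},Y_{\alpha 1},Y_{\alpha 2},Y_{\alpha 3}\}$ as (right) multiplication by a unit imaginary quaternion; hence for \emph{any} unit $u_{0}=\sum_{p}a_{p}\,{d_{p}}_{s}$ with $\sum a_{p}^{2}=1$, the operator $\mathrm{ad}(u_{0})$ acts on ${\mathfrak m}_{1}$ as $\sqrt{2s}$ times multiplication by the unit quaternion $\sum a_{p}\,\mathbf{e}_{p}$, which is again an isometry. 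Thus $\|[u_{0},u_{1}]\|^{2}=2s$ for every unit $u_{1}\in{\mathfrak m}_{1}$, with no transitivity needed. With this adjustment your argument is complete.
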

\begin{remark}{\rm $\tau<\kappa$ except in the $3$-dimensional Euclidean sphere $(S^{3},g_{\kappa,1}),$ where $\tau = \kappa.$}
\end{remark}
Next, we shall prove the following result, which generalizes one given in \cite{Ch2} for Berger spheres.

\begin{theorem}\label{conj} On $(S^{2m+1},g_{\kappa,s})$ and $(S^{4m+3},g_{\kappa,s}),$ for $s\leq 1,$ and on $({\mathbb C}P^{2m+1},g_{\kappa})$ any geodesic $\gamma_{u},$ $u = u(\theta),$ admits conjugate points which are not strictly isotropic and there exist geodesics admitting non-isotropic conjugate points. Concretely, we have:
\begin{enumerate}
\item[{\rm (i)}] If $\gamma_{u}$ is a vertical geodesic, i.e. $\theta =0,$ the points $\gamma_{u}(\frac{p\pi}{\sqrt{\tau}}),$ $p\in {\mathbb Z},$ are conjugate points but not strictly isotropic. On $(S^{2m+1},g_{\kappa,s})$ these are all conjugate points along the Hopf fiber $\gamma_{u}$ and they are not isotropic.
\item[{\rm (ii)}] If $\theta\in ]0,\frac{\pi}{2}],$ the points of the form $\gamma_{u}(\frac{t}{2\sqrt{\kappa\sin^{2}\theta + \tau \cos^{2}\theta}}),$ where
\begin{enumerate}
\item[{\rm (A)}] $t$ is a solution of the equation $\tan \frac{t}{2} = \frac{t(\tau -\kappa)\sin^{2}\theta}{2\tau},$
\noindent or
\item[{\rm (B)}] $t = 2p\pi,$ $p\in {\mathbb Z},$
\end{enumerate}
are conjugate points to the origin. In the first case, they are not strictly isotropic and in the second one, they are isotropic.
\item[{\rm (iii)}] If $\gamma_{u}$ is a horizontal geodesic, $\theta=\frac{\pi}{2},$ the points $\gamma_{u}(\frac{t}{2\sqrt{\kappa}})$ as in {\rm (ii) (A)}, where $t$ is a solution of the equation $\tan \frac{t}{2} = \frac{t(\tau -\kappa)}{2\tau},$ are conjugate points which are not isotropic.
\end{enumerate}
\end{theorem}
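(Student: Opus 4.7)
The plan is to apply Theorem \ref{cp}(ii) with a carefully chosen orthonormal pair $(u,v)$ that works uniformly for every slope angle. By Lemma \ref{isotropic} combined with the transitivity of the linear isotropy action on $\mathcal S \cap \mathfrak m_1$ and on $\mathcal S \cap {\mathfrak m_0}_s$ provided by Lemma \ref{ltran}, I would fix one unit vector $u_0 \in {\mathfrak m_0}_s$ and one unit vector $u_1 \in \mathfrak m_1$ once and for all in each of the three spaces, and consider only the geodesic $\gamma_u$ with
\begin{equation*}
u = \cos\theta\, u_0 + \sin\theta\, u_1.
\end{equation*}
The key device is the companion vector $v = \sin\theta\, u_0 - \cos\theta\, u_1$, because then $[u,v] = -[u_0, u_1]$ is independent of $\theta$, and the bracket tables (\ref{bracS}), (\ref{bracS1}) and (\ref{brac1}) show that $[u_0, u_1] \in \mathfrak m_1 \subset \mathfrak m$ in every case. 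Hence $[u,v] \in \mathfrak m \setminus \{0\}$ and case (ii) of Theorem \ref{cp} applies for every $\theta$.

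Both parameters $\lambda$ and $\rho$ of Theorem \ref{cp} are then given by transparent bracket computations. From $\lambda = \|[u, v]\|^2 = \|[u_0, u_1]\|^2$ and the bracket tables one reads off $\lambda = 4\tau$, with $\tau = \tau(\kappa, s)$ the mixed sectional curvature of Lemma \ref{tau}. For $\rho$, let $w$ be the unit vector in $\mathfrak m_1$ with $[u,v] = \sqrt{\lambda}\, w$; then $[[u,v],u] = \sqrt{\lambda}\,[w,u]$ and
\begin{equation*}
\rho = \frac{1}{\lambda}\|[[u,v],u]_{\mathfrak k}\|^2 = \|[w,u]_{\mathfrak k}\|^2 = \sin^2\theta\,\|[u_1, w]_{\mathfrak k}\|^2,
\end{equation*}
the last equality because $[u_0, w] \in \mathfrak m_1$, a one-line check in each space. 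Applying (\ref{curnor}) to the orthonormal pair $(u_1, w)$ yields $\|[u_1, w]_{\mathfrak k}\|^2 = K(u_1, w) - \tau$, and a direct bracket evaluation identifies this maximal horizontal sectional curvature as $4\kappa - 3\tau$, whence $\rho = 4(\kappa - \tau)\sin^2\theta$.

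With $\lambda = 4\tau$ and $\rho = 4(\kappa - \tau)\sin^2\theta$ in hand, Theorem \ref{cp} yields the three parts at once. For $\theta = 0$ one has $\rho = 0$, so case (ii)(A) gives the conjugate points $\gamma_u(2p\pi/\sqrt{\lambda}) = \gamma_u(p\pi/\sqrt{\tau})$ and they are not strictly isotropic. For $\theta \in (0, \pi/2]$ one has $\rho > 0$ and case (ii)(B), with $\sqrt{\lambda + \rho} = 2\sqrt{\kappa\sin^2\theta + \tau\cos^2\theta}$ and $-\rho/(2\lambda) = (\tau - \kappa)\sin^2\theta/(2\tau)$; relabelling $s$ as $t$ reproduces verbatim the two families in (ii), and specialization to $\theta = \pi/2$ gives (iii).

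It remains to strengthen ``not strictly isotropic'' to ``not isotropic'' in the two places where the theorem demands it. For a horizontal geodesic (part (iii)), I would invoke Lemma \ref{lsubmersion}: a $G$-isotropic conjugate point of $\gamma_u$ would project to a $G$-isotropic conjugate point of $\tilde\gamma_u$ in the rank-one symmetric base, but by Remark \ref{rsymmetric} those occur only at $p\pi/(2\sqrt{\kappa})$, whereas the solutions of $\tan(t/2) = t(\tau - \kappa)/(2\tau)$ are not multiples of $\pi$ precisely because $\tau < \kappa$. For the Hopf fiber of $(S^{2m+1}, g_{\kappa, s})$ (strong form of part (i)), one checks from (\ref{brac1}) that $[d_s, \mathfrak m_s] \subset \mathfrak m_s$, whence $\tilde R_{d_s} = 0$; by Lemma \ref{inf} the only $G$-isotropic Jacobi field along $\gamma_u$ vanishing at $o$ is identically zero, so no conjugate point along this fiber is $G$-isotropic, and the direct integration of (\ref{Jam1}) on $\mathfrak m_1$ (now a constant-coefficient linear system with vanishing curvature term) identifies the full list of conjugate points as precisely $\gamma_u(p\pi/\sqrt{\tau})$. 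The main technical obstacle I foresee is the uniform identification $K(u_1, w) = 4\kappa - 3\tau$ across the three spaces, which must be checked case by case from the bracket tables; once done, the rest of the argument is bookkeeping.
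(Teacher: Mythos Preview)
Your strategy coincides with the paper's: pick the companion vector $v=\sin\theta\,u_0-\cos\theta\,u_1$ (the paper uses its negative), obtain $\lambda=4\tau$ and $\rho=4(\kappa-\tau)\sin^2\theta$, invoke Theorem~\ref{cp}(ii), and finish the horizontal case with Lemma~\ref{lsubmersion} and Remark~\ref{rsymmetric}. Two points need tightening, and the paper addresses both by direct bracket computation rather than via curvature identities.

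First, Theorem~\ref{cp} has two \emph{collinearity} hypotheses: $[[u,v],u]_{\mathfrak m}$ must be a scalar multiple of $v$, and $[[u,[u,v]]_{\mathfrak k},u]$ a scalar multiple of $[u,v]$. You compute what the scalars $\lambda,\rho$ \emph{would} be (using $\lambda=\|[u,v]\|^2$ and $\rho=\|[u,w]_{\mathfrak k}\|^2$, which are consequences of the hypotheses, not substitutes for them), but you never verify the collinearity itself. The paper does this explicitly in each case, e.g.\ ``$[[u,[u,v]]_{\mathfrak k},u]$ is collinear to $f_m$''. Relatedly, your step ``$\|[u_1,w]_{\mathfrak k}\|^2=K(u_1,w)-\tau$'' silently assumes $\tfrac14\|[u_1,w]_{\mathfrak m}\|^2=\tau$, i.e.\ that $[u_1,w]_{\mathfrak m}=-2\sqrt{\tau}\,u_0$ with no further ${\mathfrak m_0}$-component; this is true, but for ${\mathbb C}P^{2m+1}$ and $S^{4m+3}$ (where $\dim{\mathfrak m_0}>1$) it is exactly the kind of bracket check you have deferred.

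Second, Lemma~\ref{ltran} explicitly excludes transitivity on $\mathcal S\cap{\mathfrak m_0}_s$ for $(S^{4m+3},g_{\kappa,1})$, so fixing a single $u_0$ does not cover all geodesics there. The paper handles this by letting $X_s$ range over all of $\mathcal S\cap{\mathfrak m_0}_s$ (parametrised by $\phi_1,\phi_2$) and carrying the computation through uniformly; with that adjustment your argument goes through.
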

\begin{proof} We shall show that in each one of these spaces there exist orthonormal vectors $u,v\in {\mathfrak m}_{s}$ satisfying the conditions of Theorem \ref{cp} (ii) for coefficients $\lambda$ and $\rho$ given by $\lambda = 4\tau$ and $\rho = 4(\kappa - \tau)\sin^{2}\theta:$

On Berger spheres $(S^{2m+1},g_{\kappa,s}),$ by using of Lemma \ref{ltran}, we restrict our study to geodesics $\gamma_{u}$ starting at the origin with $u = \sqrt{\kappa}(\cos \theta d_{s} + \sin\theta e_{m}).$ Put $v = \sqrt{\kappa}(\cos \theta e_{m} - \sin\theta d_{s}).$ Then, using (\ref{brac1}) and Lemma \ref{tau}, we get
$$
\begin{array}{l}
[u,v] = \kappa \sqrt{s}\frac{m+1}{\alpha_{m}}f_{m},\;\;\; [[u,v],u]_{{\mathfrak m}_{s}} = 2\kappa s\frac{m+1}{m}v= 4\tau v,\\[0.4pc] [u,[u,v]]_{\mathfrak k} = \kappa\sqrt{\kappa s}\frac{m+1}{\alpha_{m}}\sin\theta\Big( \frac{m+1}{\alpha_{m}}\sqrt{1-s}h_{s} + \frac{1-m}{\alpha_{m-1}}S_{m-1}\Big).
\end{array}
$$
Moreover, from (\ref{tt2}) and (\ref{brac1}), $[[u,[u,v]]_{\mathfrak k},u]$ is collinear to $f_{m}.$ It proves the result for this case. On $({\mathbb C}P^{2m+1},g_{\kappa}),$ we only need to consider geodesics $\gamma_{u}$ with $u$ given by $u = \sqrt{\kappa}(\cos\theta X + \sin\theta Y_{\alpha}),$ for some $\alpha\in \{1,\dots, m\},$ where $X = X(\phi) = \sqrt{2}(\cos\phi X_{2} + \sin \phi X_{3})\in {\mathfrak p}_{0}= {\mathfrak m}_{0}.$ Put $v = \sqrt{\kappa}(\cos\theta Y_{\alpha} - \sin \theta X)$ in Theorem \ref{cp}. Then, using (\ref{bracS}) and (\ref{bracS1}), we get
$$
\begin{array}{lcl}
[u,v] & = & -\sqrt{2}\kappa(\cos\phi Y_{\alpha 2} + \sin\phi Y_{\alpha 3}),\\[0.4pc]
[[u,v],u] & = & 2\kappa(v + \sqrt{2\kappa}\sin\theta(\cos\phi Z_{\alpha 2} + \sin\phi Z_{\alpha 3}))
\end{array}
$$
and $[[[u,v],u]_{\mathfrak k},u]$ is collinear to $[u,v].$ Finally, on $(S^{4m+3},g_{\kappa,s}),$ using again Lemma \ref{ltran}, we can take geodesics $\gamma_{u}$ with $u = \sqrt{\kappa}(\cos\theta X_{s} + \sin\theta Y_{\alpha}),$ for some $\alpha\in \{1,\dots , m\},$ where $X_{s}$ is an arbitrary vector of ${{\mathfrak m}_{0}}_{s},$ written as $X_{s} = X_{s}(\phi_{1},\phi_{2}) = \sin\phi_{1}\cos\phi_{2} {d_{1}}_{s} + \sin\phi_{1}\sin\phi_{2} {d_{2}}_{s} + \cos\phi_{2} {d_{3}}_{s}.$ Now, put $v = \sqrt{\kappa}(\cos\theta Y_{\alpha} - \sin\theta X_{s}).$ Then, from (\ref{bracS}), (\ref{bracS1}) and (\ref{dh}), taking into account Lemma \ref{tau}, one gets
$$
\begin{array}{lcl}
[u,v] & = & -\sqrt{2s}\kappa(\sin\phi_{1}\cos\phi_{2}Y_{\alpha 1} + \sin\phi_{1}\sin\phi_{2}Y_{\alpha 2} + \cos\phi Y_{\alpha 3}),\\[0.4pc] [[u,v],u]_{{\mathfrak m}_{s}} & = & 2\kappa sv = 4\tau v,\\[0.4pc]
[u,[u,v]]_{\mathfrak k} & = & 2\kappa\sqrt{\kappa s}\sin\theta\Big ((\sqrt{1-s}{h_{1}}_{s} - \sqrt{2} Z_{\alpha 1}) \sin\phi_{1}\cos\phi_{2}\\[0.3pc]
  & & + (\sqrt{1-s}{h_{2}}_{s} - \sqrt{2} Z_{\alpha 2}) \sin\phi_{1}\sin\phi_{2} + (\sqrt{1-s}{h_{3}}_{s} - \sqrt{2} Z_{\alpha 3}) \cos\phi_{2}\Big ),\\[0.4pc]
 [[u,[u,v]]_{\mathfrak k},u] & = & 2(2-s)\kappa\sin^{2}\theta [u,v].
  \end{array}
$$
Hence, the result also holds for this last case.

If $\gamma_{u}$ is vertical, $\rho =0$ and (i) follows from Theorem \ref{cp} (ii)(A). On $(S^{2m+1},g_{\kappa,s}),$ the vector $d_{s}\in {\mathfrak m}_{s}$ determines the Hopf vector field, which is $(\Lie{S}\Lie{U}(m+1)\times \Lie{U}(1))$-invariant. From (\ref{torsion}), (\ref{RR}) and (\ref{brac1}), one gets that $\{e_{r},f_{r}\}$ generates the eigenspace of the Jacobi operator $R_{d_{s}}$ with eigenvector $\frac{\tau}{\kappa}.$ Then, using \cite[Theorem 5.3]{GS}, the points $\gamma_{u}(\frac{p\pi}{\sqrt{\tau}})$ are all conjugate points along the Hopf fibers, their multiplicity is $2m$ and they are not isotropically conjugate. For (ii) we use directly Theorem \ref{cp} (ii) (B). Finally, (iii) follows from (ii) and Lemma \ref{lsubmersion} and Remark \ref{rsymmetric}.
\end{proof}

\begin{corollary}\label{cimp1} Any vertical geodesic in $({\mathbb C}P^{2m+1},g_{\kappa})$ admits isotropic conjugate points which are not strictly isotropic.
\end{corollary}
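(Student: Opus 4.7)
My plan is to combine Theorem~\ref{conj}(i), which already provides conjugate points along any vertical geodesic of $({\mathbb C}P^{2m+1},g_\kappa)$ that are not strictly isotropic, with a second application of Theorem~\ref{cp}(i) inside the totally geodesic fiber $\Lie{Sp}(1)/\Lie{U}(1)\cong S^{2}$ to exhibit an isotropic Jacobi field vanishing at those very same points.

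First, I would fix a vertical geodesic $\gamma_u$ with $u\in{\mathfrak p}_{0}$, $\|u\|_{g_\kappa}=1$. By the transitivity of the isotropy action on ${\mathcal S}\cap{\mathfrak m}_{0}$ established in Lemma~\ref{ltran}, combined with Lemma~\ref{isotropic}, I can reduce to the case $u=\sqrt{2\kappa}\,X_{2}$. Choose $v=\sqrt{2\kappa}\,X_{3}\in{\mathfrak p}_{0}$, a unit vector orthogonal to $u$. The bracket table~(\ref{bracS}) then gives $[u,v]=4\kappa\,X_{1}$, which lies in the ${\mathfrak u}(1)$-summand of ${\mathfrak k}$, so $[u,v]_{\mathfrak m}=0$, and also $[[u,v],u]=8\kappa\,v$. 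Thus the hypothesis of Theorem~\ref{cp}(i) is satisfied with $\lambda=8\kappa$, producing $\Lie{G}$-isotropically (hence isotropically) conjugate points along $\gamma_u$ at parameters $\tfrac{p\pi}{\sqrt{8\kappa}}$, $p\in{\mathbb Z}$.

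Next, Theorem~\ref{conj}(i) together with the value $\tau=\kappa/2$ recorded in Lemma~\ref{tau} says that $\gamma_u\bigl(\tfrac{p\pi}{\sqrt{\tau}}\bigr)$, $p\in{\mathbb Z}$, are conjugate to the origin but not strictly isotropic. The numerical identity
\[
\frac{p\pi}{\sqrt{\tau}}\;=\;\frac{p\pi\sqrt{2}}{\sqrt{\kappa}}\;=\;\frac{4p\pi}{\sqrt{8\kappa}}
\]
then identifies each such point with the $(4p)$-th term of the isotropic conjugate sequence obtained above. So at every one of these points both an isotropic Jacobi field and a non-isotropic Jacobi field vanish, and the point is isotropically conjugate to the origin while failing to be strictly isotropic.

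The only mildly delicate step is verifying that the two spectra of conjugate parameters actually intersect; this is precisely the numerical check $\sqrt{\lambda/\tau}=4\in{\mathbb Z}$ afforded by the specific values $\lambda=8\kappa$ and $\tau=\kappa/2$. Everything else is an immediate application of (\ref{bracS}), Lemma~\ref{ltran}, Lemma~\ref{tau}, Theorem~\ref{cp}(i) and Theorem~\ref{conj}(i).
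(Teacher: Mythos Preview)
Your argument is correct and follows essentially the same route as the paper's own proof: apply Theorem~\ref{cp}(i) with $u,v\in{\mathfrak p}_{0}$ to obtain isotropic conjugate points at $\tfrac{p\pi}{\sqrt{8\kappa}}$, then invoke Theorem~\ref{conj}(i) with $\tau=\kappa/2$ and observe that the not--strictly--isotropic parameters $\tfrac{p\pi}{\sqrt{\tau}}$ form the sublattice $\tfrac{4p\pi}{\sqrt{8\kappa}}$. The only cosmetic difference is that the paper keeps the general direction $u=\sqrt{2\kappa}(\cos\phi\,X_{2}+\sin\phi\,X_{3})$ throughout, whereas you first reduce to $u=\sqrt{2\kappa}\,X_{2}$ via Lemmas~\ref{ltran} and~\ref{isotropic}; since ${\mathfrak p}_{0}$ is two--dimensional the direct computation is no harder, so both presentations are equivalent.
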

\begin{proof} Put $u = \sqrt{2\kappa}(\cos\phi X_{2} + \sin\phi X_{3})$ and $v = \sqrt{2\kappa}(\cos\phi X_{3} - \sin\phi X_{2})$ in Theorem \ref{cp}. Then, from (\ref{bracS}), one gets $[u,v] = 4\kappa X_{1}$ and $[[u,v],v]= 8\kappa v.$ So, the points $\gamma_{u}(\frac{\sqrt{2\kappa}p\pi}{4\kappa})$ are isotropically conjugate to the origin. Hence, using Theorem \ref{conj} (i), the points $\gamma_{u}(\frac{\sqrt{2\kappa}p\pi}{\kappa})$ satisfy the conditions of this corollary.
\end{proof}
\begin{remark}{\rm From (\ref{curnor}) and (\ref{bracS}), the $2$-sphere $S^{2}= \Lie{Sp}(1)/\Lie{U}(1),$ isometrically embedded as totally geodesic submanifold in $({\mathbb C}P^{2m+1},g_{\kappa}),$ is the Euclidean sphere with constant curvature $8\kappa.$ Then the vertical geodesics of $({\mathbb C}P^{2m+1},g_{\kappa})$ are closed with length $\frac{\sqrt{2\kappa}\pi}{2\kappa}.$}
\end{remark}

\subsection{\sc The Berger space $B^{13} = SU(5)/H$}\indent

\vspace{0.1cm}

The isotropy subgroup $H$ is given by
\[
H:= \left\{\left(
\begin{array}{cc}
zA & 0\\
0 & {\bar{z}}^{4}
\end{array}
\right ) \mid \; A\in Sp(2)\subset SU(4),\; z\in S^{1}\subset
{\mathbb C}\right \}\subset S(U(4)\times U(1))\subset SU(5).
\]
Then $H$ may be considered as the image of $Sp(2)\times S^{1}$ under the homomorphism
\[
\chi : \Lie{SU}(4)\times S^{1}\to \Lie{U}(4)\subset \Lie{SU}(5),\;\;\; (A,z)\mapsto zA,
\]
where $\Lie{U}(4)$ is embedded in $\Lie{SU}(5)$ through the map {\footnotesize $\iota: A\to \left (\begin{array}{cc} A& 0\\ 0 & {\rm det}A^{-1}\end{array}\right )$ } or equivalently, as $(Sp(2)\times S^{1})/\{\pm(Id,1)\}.$ So, its Lie algebra ${\mathfrak h}$ is given by ${\mathfrak h} = {\mathfrak s}{\mathfrak p}(2)\oplus \RR Z_{0},$ where $Z_{0}= S_{4}.$ On ${\mathfrak s}{\mathfrak u}(5)$ we take the bi-invariant inner product $\langle X,Y\rangle = -\frac{1}{4}{\rm trace}\;XY.$ Using the fact that $Sp(2)/{\mathbb Z}_{2}\cong SO(5)$ and $SU(4)/{\mathbb Z}_{2}\cong SO(6),$ it follows that the five-dimensional Euclidean sphere $S^{5} = SO(6)/SO(5)$ can be also described as the quotient $SU(4)/Sp(2),$ with reductive orthogonal decomposition ${\mathfrak s}{\mathfrak u}(4) = {\mathfrak s}{\mathfrak p}(2)\oplus {\mathfrak m}_{0},$ ${\mathfrak m}_{0}\cong \RR^{5}.$ Moreover, because $[Z_{0},{\mathfrak m}_{0}] = 0,$ one gets that $[{\mathfrak h}, {\mathfrak m}_{0}] = [{\mathfrak s}{\mathfrak p}(2), {\mathfrak m}_{0}] \subset {\mathfrak m}_{0}.$

On the other hand, the quotient expression $SU(5)/S(U(4)\times U(1))$ for ${\mathbb C}P^{4}$ determines another reductive decomposition ${\mathfrak s}{\mathfrak u}(5) = ({\mathfrak s}{\mathfrak u}(4) \oplus \RR Z_{0})\oplus {\mathfrak m}_{1},$ ${\mathfrak m}_{1}\cong {\mathbb C}^{4}.$ Then,
$[{\mathfrak h},{\mathfrak m}_{1}]\subset [{\mathfrak s}{\mathfrak u}(4)\oplus \RR Z_{0},{\mathfrak m}_{1}] \subset {\mathfrak m}_{1}.$ Hence, ${\mathfrak s}{\mathfrak u}(5) = {\mathfrak h}\oplus {\mathfrak m},$ where ${\mathfrak m} = {\mathfrak m}_{0} \oplus {\mathfrak m}_{1}$ is a reductive decomposition of ${\mathfrak s}{\mathfrak u}(5)$ associated to $SU(5)/H.$ Since $H$ is connected, ${\mathfrak m}_{0}$ and ${\mathfrak m}_{1}$ are $Ad(H)$-invariant subspaces of ${\mathfrak m}$ and, because ${\mathfrak h}$ contains to ${\mathfrak s}{\mathfrak p}(2),$ ${\mathfrak m}_{0}$ is  isotropy-irreducible. Moreover, the corresponding totally geodesic submanifold $M_{{\mathfrak m}_{0}}$ through the origin in $B^{13}$ is given by $SU(4)/(SU(4)\cap H),$ which is diffeomorphic to the $5$-dimensional real projective space $\RR P^{5}$ because $\chi(A,1)=\chi(-A,-1),$ for all $A\in Sp(2).$
\begin{proposition}\label{proB13} The projection $\pi: B^{13}\to {\mathbb C}P^{4} = SU(5)/S(U(4)\times U(1)),$ $gH\mapsto gS(U(4)\times U(1)),$ determines a homogeneous fibration $\RR P^{5}\to B^{13}\to {\mathbb C}P^{4},$ where the fibers are obtained by the action of $SU(5)$ on $\RR P^{5} = SU(4)/(SU(4)\cap H).$
\end{proposition}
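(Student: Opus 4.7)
The plan is to split the argument into three steps: verify that $\pi$ is well defined and apply the nested-subgroup fibration theorem, replace the typical fiber $S(U(4)\times U(1))/H$ by the more tractable $SU(4)/(SU(4)\cap H)$, and finally identify this quotient with $\RR P^{5}$.

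First, I would observe that for $(A,z) \in Sp(2)\times S^{1}$ one has $\det(zA)\cdot\bar z^{4} = z^{4}\bar z^{4} = 1$, so $H = \chi(Sp(2)\times S^{1})\subset S(U(4)\times U(1))$; consequently the prescription $gH\mapsto gS(U(4)\times U(1))$ does not depend on the representative and $\pi$ is well defined. The standard fact that, for a closed tower $K\subset H'\subset G$ of Lie groups, the canonical map $G/K\to G/H'$ is a locally trivial fiber bundle with fiber $H'/K$ (and structure group $H'$) then provides the bundle structure. Each fiber $gH'/K$ is obtained from the typical one by the left action of $g\in SU(5)$, which matches the last clause of the statement.

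For the second step, I would replace $S(U(4)\times U(1))/H$ by $SU(4)/(SU(4)\cap H)$. The inclusion $SU(4)\hookrightarrow S(U(4)\times U(1))$, $A\mapsto\mathrm{diag}(A,1)$, composed with the quotient map, is $SU(4)$-equivariant with kernel $SU(4)\cap H$, so it suffices to show surjectivity, i.e.\ $S(U(4)\times U(1)) = SU(4)\cdot H$. Given $\mathrm{diag}(B,(\det B)^{-1})$, I would choose $z\in S^{1}$ with $z^{4} = \det B$, set $A = z^{-1}B\in SU(4)$, and display the factorisation $\mathrm{diag}(B,(\det B)^{-1}) = \mathrm{diag}(A,1)\cdot\chi(I,z)$, which gives the desired identification of the fiber.

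The third and main step is to recognise $SU(4)/(SU(4)\cap H)$ as $\RR P^{5}$, and this is where the delicate factor of two lives. Since $\chi(A,z) = \mathrm{diag}(zA,\bar z^{4})$ lies in the embedded $SU(4)\subset SU(5)$ exactly when $\bar z^{4}=1$, the intersection $SU(4)\cap H$ is the image under $\chi$ of $Sp(2)\times\mu_{4}$, where $\mu_{4}=\{\pm 1,\pm i\}$, subject to the identification $\chi(A,1)=\chi(-A,-1)$ recalled in the text. It therefore contains $Sp(2)$ with index two, a non-trivial coset representative being $iI$ (and $(iI)^{2}=-I\in Sp(2)$). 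Passing to the quotient yields $SU(4)/(SU(4)\cap H)=(SU(4)/Sp(2))/{\mathbb Z}_{2}=S^{5}/{\mathbb Z}_{2}$, the ${\mathbb Z}_{2}$-action being left translation by the central element $iI$. To finish, I would invoke the spin double cover $SU(4)\to SO(6)$ already used in the text (with kernel $\{\pm I\}$): the central $iI$ descends to the non-trivial central element $-I\in SO(6)$, whose action on $S^{5} = SO(6)/SO(5)$ is antipodal, so $S^{5}/{\mathbb Z}_{2}=\RR P^{5}$. The main obstacle is precisely this tracking of how $iI$ acts through the spin covering; once that is settled, the rest is bookkeeping with the formula for $\chi$.
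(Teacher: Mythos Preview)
Your argument is correct, and Step~2 takes a genuinely different route from the paper. Where you establish $S(U(4)\times U(1))/H \cong SU(4)/(SU(4)\cap H)$ by proving the group-theoretic decomposition $S(U(4)\times U(1)) = SU(4)\cdot H$ via an explicit factorisation, the paper works directly at the quotient level: it notes that the natural map $SU(4)/(SU(4)\cap H)\to S(U(4)\times U(1))/H$ is an immersion between manifolds of the same dimension, hence an open map, and concludes by compactness (open and closed in a connected space) that it is a diffeomorphism. Your explicit factorisation is more elementary and gives the same result without appealing to dimension and connectedness; the paper's argument has the virtue of being coordinate-free and reusable in analogous situations (it is exactly the template reused for $W^{7}$). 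For Step~3, you are considerably more careful than the paper, which simply asserts the identification with $\RR P^{5}$ in the paragraph preceding the proposition with only the remark $\chi(A,1)=\chi(-A,-1)$ as justification. Your tracking of $SU(4)\cap H = Sp(2)\cup iI\cdot Sp(2)$ and of how $iI$ descends through the spin cover $SU(4)\to SO(6)$ to the antipodal map on $S^{5}$ fills in what the paper leaves implicit.
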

\begin{proof}
Because $S(U(4)\times U(1))$ is closed in $SU(5)$ and $H$ compact, it follows that the quotient manifold $S(U(4)\times U(1))/H$ is a regular submanifold of $B^{13}$ (see \cite[Proposition 4.4, Ch. II]{He}) and ${\mathfrak s}{\mathfrak u}(4)\oplus {\mathfrak u}(1) = ({\mathfrak s}{\mathfrak p}(2)\oplus \RR Z_{0})\oplus {\mathfrak m}_{1}$ is a reductive decomposition. Moreover, $SU(4)/(SU(4)\cap H)$ is a submanifold of $S(U(4)\times U(1))/H,$ via the immersion $g\;(SU(4)\cap H)\mapsto \iota
(g)H,$ for $g\in SU(4)$,  where $\iota \, : \,
SU(4) \to S(U(4)\times U(1))$ is the standard inclusion map. Hence, $SU(4)/(SU(4)\cap H)$ becomes into an open
submanifold of $S(U(4)\times U(1))/H.$ Taking into account that it is also closed
in $S(U(4)\times U(1))/H,$ one gets
\[
\RR P^{5} = \frac{SU(4)}{SU(4)\cap H} = \frac{S(U(4)\times U(1))}{H}.
\]
It proves the proposition.
\end{proof}

Following \cite{Hei}, an orthonormal basis of
${\mathfrak s}{\mathfrak u}(5)$ with respect to $\langle\cdot , \cdot \rangle$ and adapted to the reductive
decomposition ${\mathfrak s}{\mathfrak u}(5) = {\mathfrak h}\oplus {\mathfrak m}_{0}\oplus {\mathfrak m}_{1}$ is
\[
\{H_{1},\dots ,H_{11};u_{0},u_{1},u_{2},v_{1},v_{2}; e_{1},\dots ,e_{4},f_{1},\dots,f_{4}\}
\]
where $\{H_{1},\dots, H_{11}\}$ is the basis of ${\mathfrak h}$ given by
$$
\begin{array}{lclclclclcl}
H_{1} & = & A_{1,2} + 2A_{2,3} + A_{3,4}, & & H_{2} & = & B_{1,3}
+ B_{2,4}, & & H_{3} \hspace{-0.02cm}& \hspace{-0.02cm}= \hspace{-0.02cm}&\hspace{-0.02cm}C_{1,3} + C_{2,4},\\
H_{4} & = & A_{1,2} - A_{3,4}, & & H_{5} & = & B_{1,3} -
B_{2,4}, & & H_{6}\hspace{-0.02cm} & \hspace{-0.02cm}=\hspace{-0.02cm} &\hspace{-0.02cm} C_{1,3} - C_{2,4},\\
H_{7} & = & C_{1,2} - C_{3,4}, & & H_{8}& = & B_{1,4} + B_{2,3}, & &
H_{9}\hspace{-0.02cm}& \hspace{-0.02cm}=\hspace{-0.02cm} & \hspace{-0.02cm}C_{1,4} + C_{2,3},\\
H_{10} & = & B_{1,2} + B_{3,4}, & & H_{11}& = & \sqrt{2}Z_{0},
\end{array}
$$
the matrices $A_{jk},$ $B_{jk},$ $C_{jk},$ $1\leq j<k\leq 5,$ are defined in (\ref{ABC}),
\[
u_{0}  =  A_{1,2} + A_{3,4},\;\;\;\; u_{1}  =  B_{1,2} - B_{3,4},\;\;\; u_{2}  =  B_{1,4} - B_{2,3},\;\;\;\; v_{1}  =  C_{1,2} + C_{3,4}, \;\;\; v_{2} = C_{1,4} - C_{2,3}
\]
constitute an orthonormal basis for ${\mathfrak m}_{0}$ and
\[
e_{r} = \sqrt{2}B_{r,5},\;\;\;\; f_{r} = J_{o}e_{r} = \sqrt{2}C_{r,5},\;\;\; r = 1,\dots ,4,
\]
is a basis of ${\mathfrak m}_{1}\cong {\mathbb C}^{4},$ being $J_{o}$ the canonical complex structure $J_{o} = \frac{1}{\sqrt{5}}{\mathrm ad}_{Z_{0}}$ induced in ${\mathbb C}P^{4}.$

Every $SU(5)$-invariant metric on $B^{13}$ is determined, up to a scaling factor, by an $Ad(H)$-invariant inner product $\langle\cdot,\cdot\rangle_{s}$ on ${\mathfrak m},$ for some $s>0,$ given by \cite{Put}
\[
\langle\cdot,\cdot\rangle_{s} = s\langle\cdot,\cdot\rangle_{{\mathfrak m}_{0}\times{\mathfrak m}_{0}} + \langle\cdot,\cdot\rangle_{{\mathfrak m}_{1}\times{\mathfrak m}_{1}}.
\]

\begin{lemma}\label{lB13} The standard metric $\langle\cdot,\cdot \rangle$ is, up to homotheties, the unique $SU(5)$-invariant metric on $B^{13}$ which is normal homogeneous.
\end{lemma}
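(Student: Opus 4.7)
The plan is to exploit the simplicity of $\mathfrak{s}\mathfrak{u}(5)$ to reduce the classification of $\Lie{SU}(5)$-normal homogeneous metrics on $B^{13}$ to a single parameter, and then compare with the explicit one-parameter family $\langle\cdot,\cdot\rangle_s$ already described.

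First, I would recall that a $\Lie{G}$-invariant metric $g$ on $\Lie{G}/\Lie{K}$ is $\Lie{G}$-normal homogeneous precisely when the associated $\Lie{A}\Lie{d}(\Lie{K})$-invariant inner product on ${\mathfrak m}$ is the restriction of some $\Lie{A}\Lie{d}(\Lie{G})$-invariant inner product on ${\mathfrak g}$ with ${\mathfrak m}={\mathfrak k}^{\perp}$. Applied to $\Lie{G}=\Lie{SU}(5)$, the key input is that $\mathfrak{s}\mathfrak{u}(5)$ is simple, so by Schur's lemma the space of $\Lie{A}\Lie{d}(\Lie{SU}(5))$-invariant symmetric bilinear forms on $\mathfrak{s}\mathfrak{u}(5)$ is one-dimensional, all proportional to the Killing--Cartan form and hence to the standard inner product $\langle X,Y\rangle=-\frac{1}{4}\operatorname{trace} XY$.

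Next, I would observe that any candidate bi-invariant inner product on $\mathfrak{s}\mathfrak{u}(5)$ is therefore $c\,\langle\cdot,\cdot\rangle$ for some $c>0$, so the induced normal homogeneous metric on $B^{13}$ corresponds to the restriction $c\,\langle\cdot,\cdot\rangle|_{{\mathfrak m}\times{\mathfrak m}}$. In the orthonormal basis of ${\mathfrak m}={\mathfrak m}_{0}\oplus{\mathfrak m}_{1}$ displayed after Proposition \ref{proB13}, both ${\mathfrak m}_{0}$ and ${\mathfrak m}_{1}$ inherit the \emph{same} scaling from $\langle\cdot,\cdot\rangle$; comparing with the parametrization
\[
\langle\cdot,\cdot\rangle_{s}=s\,\langle\cdot,\cdot\rangle|_{{\mathfrak m}_{0}\times{\mathfrak m}_{0}}+\langle\cdot,\cdot\rangle|_{{\mathfrak m}_{1}\times{\mathfrak m}_{1}}
\]
of all $\Lie{SU}(5)$-invariant metrics on $B^{13}$, this corresponds precisely to the value $s=1$.

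Putting the two steps together, every $\Lie{SU}(5)$-normal homogeneous metric on $B^{13}$ is, up to the scalar $c$, the metric $\langle\cdot,\cdot\rangle_{1}$, which is exactly the restriction of the standard inner product. Conversely, $\langle\cdot,\cdot\rangle_{1}$ is manifestly normal homogeneous since it is the restriction of an $\Lie{A}\Lie{d}(\Lie{SU}(5))$-invariant inner product on $\mathfrak{s}\mathfrak{u}(5)$, so the uniqueness statement, up to homotheties, follows. There is no real obstacle here: the argument is essentially a direct application of simplicity of $\mathfrak{s}\mathfrak{u}(5)$ together with the one-parameter description of the $\Lie{SU}(5)$-invariant metrics already recorded; the only point that requires care is to confirm that the chosen orthonormal basis gives equal scalings on ${\mathfrak m}_{0}$ and ${\mathfrak m}_{1}$, so that the standard inner product indeed corresponds to $s=1$ and not to some other scaling hidden in the normalization.
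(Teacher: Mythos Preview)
Your argument is correct, but it takes a different route from the paper's. The paper proves $s=1$ by a direct bracket computation: assuming $\langle\cdot,\cdot\rangle_{s}$ extends to a bi-invariant inner product on $\mathfrak{su}(5)$ with $\mathfrak{h}\perp\mathfrak{m}$, it uses (\ref{tt2}) and the explicit brackets from (\ref{bracABC}) to compute
\[
1=\langle e_{1},e_{1}\rangle_{s}=\langle[u_{2},e_{4}],e_{1}\rangle_{s}=\langle[e_{4},e_{1}],u_{2}\rangle_{s}=\langle u_{2}+H_{8},u_{2}\rangle_{s}=\langle u_{2},u_{2}\rangle_{s}=s.
\]
You instead invoke the simplicity of $\mathfrak{su}(5)$ and Schur's lemma to conclude that every bi-invariant inner product is a scalar multiple of the standard one, hence restricts to $\mathfrak{m}_{0}$ and $\mathfrak{m}_{1}$ with the same scale, forcing $s=1$. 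This is in fact the general principle the paper already recorded in Section~2 (``If $G$ is a simple compact Lie group, any naturally reductive $G$-homogeneous Riemannian manifold is standard, up to scaling factor''), so your proof is essentially an appeal to that remark. Your approach is cleaner and requires no computation; the paper's approach is self-contained and makes the constraint $s=1$ visible through a single bracket identity tying $\mathfrak{m}_{0}$ and $\mathfrak{m}_{1}$ together.
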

\begin{proof}
We shall show that $s$ must be $1.$ Suppose that $\langle\cdot,\cdot\rangle_{s}$ can be extended to a bi-invariant inner product of ${\mathfrak s}{\mathfrak u}(5),$ which we also denote by $\langle\cdot,\cdot\rangle_{s},$ making ${\mathfrak h}$ and ${\mathfrak m}$ orthogonal. Then, from (\ref{tt2}) and (\ref{bracABC}), we have
 \[
 1 = \langle e_{1}, e_{1}\rangle_{s} = \langle [u_{2},e_{4}],e_{1}\rangle_{s} = \langle [e_{4},e_{1}],u_{2}\rangle_{s} = \langle u_{2} + H_{8},u_{2}\rangle_{s} = \langle u_{2}, u_{2}\rangle_{s} = s.
 \]
\end{proof}

Using (\ref{bracABC}), one gets $\|[e_{r},f_{r}]_{\mathfrak h}\|^{2} = 7$ and $\|[e_{r},f_{r}]_{\mathfrak m}\|^{2} = \|[e_{r},f_{r}]_{{\mathfrak m}_{0}}\|^{2} = 1,$ for $r=1,\dots ,4.$ Then, from (\ref{curnor}) and the O'Neill formula, it follows
\[
K_{B^{13}}(e_{r},f_{r}) = \frac{29}{4},\;\;\;\; K_{{\mathbb C}P^{4}}(e_{r},f_{r}) = 8.
\]
Hence, the base space of the Riemannian submersion in Proposition \ref{proB13} is isometric to ${\mathbb C}P^{4}(2).$ Moreover, since for orthonormal vectors $u,v\in {\mathfrak m}_{0},$ one gets $\|[u,v]\|^{2} = \|[u,v]_{\mathfrak h}\|^{2} = 4,$ (\ref{curnor}) implies that the fibers are isometric to $\RR P^{5}(4).$
\begin{lemma}\label{ltrans2} The linear isotropy action of $H$ on ${\mathcal S}\cap {\mathfrak m}_{0}$ and on ${\mathcal S}\cap {\mathfrak m}_{1}$ is transitive.
\end{lemma}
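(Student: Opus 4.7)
The plan is to identify, for each of ${\mathfrak m}_0$ and ${\mathfrak m}_1$, a subgroup of $H$ whose adjoint action on that subspace is already well-understood and transitive on the corresponding unit sphere. Since $H$ contains these subgroups, transitivity of the full $H$-action follows at once. This sidesteps a direct verification of the criterion in Proposition \ref{ptrans} (which for ${\mathfrak m}_1$ would require computing ${\mathfrak h}$-components of brackets using (\ref{bracABC})) and exploits only the decomposition ${\mathfrak h} = {\mathfrak s}{\mathfrak p}(2) \oplus \RR Z_0$ already set up in the excerpt, together with the standard embeddings of $\Lie{Sp}(2)$.

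For ${\mathcal S}\cap {\mathfrak m}_0$, the excerpt has recorded $[Z_0, {\mathfrak m}_0] = 0$, so the isotropy action of $H$ on ${\mathfrak m}_0$ factors through $\Lie{Sp}(2)$. On the other hand, ${\mathfrak m}_0$ is the tangent space at the origin of the totally geodesic submanifold $\Lie{SU}(4)/(\Lie{SU}(4)\cap H) = \RR P^5$ of constant positive curvature identified in Proposition \ref{proB13}. Via the accidental isomorphism $\Lie{Sp}(2)/{\mathbb Z}_2 \cong \Lie{SO}(5)$, the $\Lie{Sp}(2)$-action on ${\mathfrak m}_0 \cong \RR^5$ is (up to the kernel $\{\pm I\}$) the defining representation of $\Lie{SO}(5)$ on $\RR^5$, which is transitive on $S^4$.

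For ${\mathcal S}\cap {\mathfrak m}_1$, I would use that ${\mathfrak m}_1$ is the canonical reductive complement of ${\mathfrak s}{\mathfrak u}(4)\oplus \RR Z_0$ in ${\mathfrak s}{\mathfrak u}(5)$, so the adjoint action of $\Lie{S}(\Lie{U}(4)\times \Lie{U}(1))$ on it is precisely the standard unitary isotropy representation of ${\mathbb C}P^4$ on ${\mathbb C}^4$. Restricting along $\Lie{Sp}(2) \subset \Lie{SU}(4) \subset \Lie{S}(\Lie{U}(4)\times \Lie{U}(1))$ yields the standard embedding of $\Lie{Sp}(2)$ in $\Lie{SU}(4)$ acting on ${\mathbb C}^4 \cong {\mathbb H}^2$ by left quaternionic matrix multiplication; the stabilizer of a unit vector is $\Lie{Sp}(1)$, so the orbit through it is $\Lie{Sp}(2)/\Lie{Sp}(1) = S^7 = {\mathcal S}\cap {\mathfrak m}_1$.

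I do not expect a genuine obstacle here; the only point requiring a little care is to verify that the extra $Z_0$-direction inside ${\mathfrak h}$ imposes no constraint that could spoil the argument. It acts trivially on ${\mathfrak m}_0$ (as already noted) and merely by a compatible scalar $\Lie{U}(1)$-factor on ${\mathfrak m}_1$ via the character $z\mapsto z^5$ built into the definition of $\chi$, so in both cases the subgroup $\Lie{Sp}(2)\subset H$ already realizes transitivity on its own.
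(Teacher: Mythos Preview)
Your argument is correct, and it follows a genuinely different route from the paper's own proof. The paper proceeds computationally: it picks $u_0\in{\mathfrak m}_0$ and $e_1\in{\mathfrak m}_1$, expands $[u_0,x]_{\mathfrak h}$ and $[e_1,v]_{\mathfrak h}$ in the explicit $H_1,\dots,H_{11}$ basis via (\ref{bracABC}), reads off that each vanishes only when $x$ (resp.\ $v$) is collinear to $u_0$ (resp.\ $e_1$), and then invokes the equivalence in Proposition~\ref{ptrans}. Your approach instead identifies the representations directly: the $\Lie{Sp}(2)$-factor of $H$ acts on ${\mathfrak m}_0$ as the isotropy of $S^5=\Lie{SU}(4)/\Lie{Sp}(2)$, hence (via $\Lie{Sp}(2)/{\mathbb Z}_2\cong\Lie{SO}(5)$) as the standard $\Lie{SO}(5)$-representation on $\RR^5$, and on ${\mathfrak m}_1\cong{\mathbb C}^4\cong{\mathbb H}^2$ by its defining symplectic representation, both of which are classically transitive on the unit sphere. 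The paper's method is self-contained within the framework it has built and yields the exact bracket formulae later reused in Theorem~\ref{conjB13}; your method is cleaner and basis-free, but it imports the transitivity of $\Lie{Sp}(2)$ on $S^7$ and $\Lie{SO}(5)$ on $S^4$ as external facts rather than deriving them from Proposition~\ref{ptrans}.
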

\begin{proof} Given $x= x^{0}u_{0} + \sum_{i=1,2}(x^{i}_{1}u_{i} + x^{i}_{2}v_{i})\in {\mathfrak m}_{0}$ and $v = \sum_{r=1}^{4}(v^{r}_{1}e_{r} + v^{r}_{2}f_{r})\in {\mathfrak m}_{1},$ one gets, using (\ref{bracABC}), the following brackets:
$$
\begin{array}{lcl}
[u_{0},x]_{\mathfrak h} & = & 2(x^{1}_{1}H_{7} -x^{2}_{2}H_{8} + x^{2}_{1}H_{9} -x^{1}_{2}H_{10}),\\[0.4pc]
[e_{1},v]_{\mathfrak h} &  =  & v^{1}_{2}H_{1} - v^{3}_{1}H_{2} + v^{3}_{2}H_{3} + v^{1}_{2}H_{4} -v^{3}_{1}H_{5} + v^{3}_{2}H_{6} + v^{2}_{2}H_{7}\\[0.4pc]
 &  &  - v^{4}_{1}H_{8} + v^{4}_{2}H_{9} -v^{2}_{1}H_{10} + \sqrt{5}v^{1}_{2}H_{11}.
\end{array}
$$
It implies that $[u_{0},x]_{\mathfrak h}$ or $[e_{1},v]_{\mathfrak h}$ is zero if and only if $x$ is collinear to $u_{0}$ or $v$ is collinear to $e_{1}.$ Then the result follows from Proposition \ref{ptrans}.
\end{proof}

\begin{theorem}\label{conjB13} Let $\gamma_{u},$ $u = u(\theta),$ be a geodesic on $(B^{13},g)$ with slope angle $\theta = {\mathrm ang}(u,{\mathfrak m}_{0})\in [0,\frac{\pi}{2}].$ Then we have:
\begin{enumerate}
\item[{\rm (i)}] If $\gamma_{u}$ is a vertical geodesic, the points $\gamma_{u}(\frac{p\pi}{2}),$ $p\in {\mathbb Z},$ are isotropically conjugate to the origin and those $\gamma_{u}(p\pi)$ are not strictly isotropic.
\item[{\rm (ii)}] If $\theta\in ]0,\frac{\pi}{2}]$ and $u$ is orthogonal to $u_{0},$ $\gamma_{u}(\frac{t}{\sqrt{1 +\sin^{2}\theta}}),$ where
\begin{enumerate}
\item[{\rm (A)}] $t$ is a solution of the equation $\tan \frac{t}{2} = -\sin^{2}\theta\frac{t}{2}$
\noindent or
\item[{\rm (B)}] $t = 2p\pi,$ $p\in {\mathbb Z},$
\end{enumerate}
are conjugate points to the origin. In the first case, they are not strictly isotropic and in the second one, they are isotropic.
\item[{\rm (iii)}] If $\gamma_{u}$ is a horizontal geodesic, the points $\gamma_{u}(\frac{\sqrt{2}t}{2})$ as in {\rm (ii) A}, where $t$ is a solution of the equation $\tan \frac{t}{2} = -\frac{t}{2},$ are conjugate points to the origin but not isotropic.
\end{enumerate}
\end{theorem}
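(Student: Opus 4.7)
The plan is to verify, in each of cases (i)--(iii), the hypotheses of Theorem \ref{cp} for a carefully chosen orthonormal pair $u,v\in\mathfrak{m}$, mirroring the argument used in Theorem \ref{conj}. By Lemma \ref{ltrans2} the isotropy acts transitively on $\mathcal{S}\cap\mathfrak{m}_{0}$ and on $\mathcal{S}\cap\mathfrak{m}_{1}$, and Lemma \ref{isotropic} guarantees that the resulting statements propagate to every geodesic of the given slope angle, subject to the orthogonality condition imposed in (ii). The Lie multiplication table (\ref{bracABC}), together with the explicit basis of $\mathfrak{su}(5)=\mathfrak{h}\oplus\mathfrak{m}_{0}\oplus\mathfrak{m}_{1}$ recalled just after Proposition \ref{proB13}, provides all the computational input.

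For (i) I would take the vertical generator $u=u_{0}=A_{1,2}+A_{3,4}$. Pairing it with the vertical $v=u_{1}=B_{1,2}-B_{3,4}$, the identities in (\ref{bracABC}) give $[u_{0},u_{1}]=2H_{7}\in\mathfrak{h}$ and $[[u_{0},u_{1}]_{\mathfrak{k}},u_{0}]=4u_{1}$, so Theorem \ref{cp}(i) applies with $\lambda=4$ and yields the $\Lie{G}$-isotropic conjugate points $\gamma_{u}(p\pi/2)$. Pairing the same $u$ with the horizontal $v=e_{1}=\sqrt{2}B_{1,5}$, one obtains $[u_{0},e_{1}]=f_{1}\in\mathfrak{m}_{1}\setminus\{0\}$ together with $[[u_{0},e_{1}],u_{0}]=e_{1}\in\mathfrak{m}$; this places us in case (ii)(A) of Theorem \ref{cp} with $\rho=0$, and Lemma \ref{inf} ensures that the conjugate Jacobi field produced by this bracket datum is not $\Lie{G}$-isotropic.

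For (ii) and (iii) I would write $u=\cos\theta\,u_{0}'+\sin\theta\,u_{1}'$ with unit $u_{0}'\in\mathfrak{m}_{0}$ and $u_{1}'\in\mathfrak{m}_{1}$. The hypothesis $u\perp u_{0}$ combined with Lemma \ref{ltrans2} reduces the problem to $u_{0}'=u_{1}$ and $u_{1}'=e_{1}$, and the natural partner is $v=-\sin\theta\,u_{1}+\cos\theta\,e_{1}$. A direct calculation using (\ref{bracABC}) yields $[u,v]=[u_{1},e_{1}]=-e_{2}\in\mathfrak{m}_{1}\setminus\{0\}$, then $[[u,v],u]_{\mathfrak{m}}=v$ giving $\lambda=1$, and $[u,[u,v]]_{\mathfrak{k}}=\sin\theta\,H_{10}$ with $[[u,[u,v]]_{\mathfrak{k}},u]=-\sin^{2}\theta\,e_{2}=\sin^{2}\theta\,[u,v]$ giving $\rho=\sin^{2}\theta$. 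Substituting $\lambda+\rho=1+\sin^{2}\theta$ and $\rho/(2\lambda)=\sin^{2}\theta/2$ into Theorem \ref{cp}(ii)(B) reproduces verbatim the two families of conjugate points in (ii); specialising $\theta=\pi/2$ gives $\lambda=\rho=1$ and hence (iii). To upgrade ``not strictly $\Lie{G}$-isotropic'' to ``not $\Lie{G}$-isotropic'' in (iii), I would invoke Proposition \ref{proB13} and the contrapositive of Lemma \ref{lsubmersion}: since the equation $\tan(s/2)=-s/2$ admits no solution of the form $s=q\pi/2$ with $q\in\mathbb{Z}$ (a $\pi$-irrationality check), the projections $\tilde{\gamma}_{u}(s/\sqrt{2})$ avoid the conjugate locus $\{p\pi/(2\sqrt{2}):p\in\mathbb{Z}\}$ of $\mathbb{C}P^{4}(2)$, so no $\Lie{G}$-isotropic Jacobi field of $B^{13}$ can vanish at $\gamma_{u}(s/\sqrt{2})$.

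The main obstacle I anticipate is the extraction of the $\mathfrak{k}$-component of $[u,[u,v]]$ in the $\rho$ computation. Several basis elements of $\mathfrak{h}$ and of $\mathfrak{m}_{0}$ share the same underlying $B_{rs}$-summands---for instance $H_{10}=B_{1,2}+B_{3,4}$ and $u_{1}=B_{1,2}-B_{3,4}$---so products such as $[e_{i},e_{j}]=\pm 2B_{rs}$ must be redistributed as $\tfrac{1}{2}(H\pm u)$ before the $\mathfrak{k}$-part can be read off cleanly. The collinearity $[[u,[u,v]]_{\mathfrak{k}},u]=\rho\,[u,v]$ with coefficient exactly $\sin^{2}\theta$ then rests on the two auxiliary identities $[H_{10},u_{1}]=0$ and $[H_{10},e_{1}]=-e_{2}$, both immediate from (\ref{bracABC}); once these are in hand, every conclusion of the theorem follows by direct application of Theorem \ref{cp}.
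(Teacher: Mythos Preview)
Your treatment of (i) and (iii) is essentially the paper's. The gap is in (ii): the reduction ``$u_{0}'=u_{1}$ and $u_{1}'=e_{1}$'' cannot be extracted from Lemma~\ref{ltrans2}. That lemma only asserts transitivity of $H$ on $\mathcal{S}\cap\mathfrak{m}_{0}$ and on $\mathcal{S}\cap\mathfrak{m}_{1}$ \emph{separately}; it says nothing about joint transitivity on pairs. Once you have spent an isotropy element to place the horizontal part at $e_{1}$, only the stabiliser of $e_{1}$ in $H$ remains available, and this $4$-dimensional subgroup neither acts transitively on the $3$-sphere of unit vectors in $u_{0}^{\perp}\cap\mathfrak{m}_{0}$ nor even preserves that hyperplane (the full isotropy moves $u_{0}$ itself, since it is transitive on $\mathcal{S}\cap\mathfrak{m}_{0}$). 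So your computation with $x=u_{1}$ establishes (ii) for a single geodesic, not for all $u$ with $u\perp u_{0}$ as the statement requires.

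The paper circumvents this by keeping the vertical component $x=x^{0}u_{0}+x^{1}_{1}u_{1}+x^{2}_{1}u_{2}+x^{1}_{2}v_{1}+x^{2}_{2}v_{2}$ \emph{generic} after fixing the horizontal part at $e_{1}$, and pushing the bracket computation through to the closed formula
\[
[[u,[u,v]]_{\mathfrak{k}},u]=\sin^{2}\theta\bigl([u,v]+6x^{0}f_{1}\bigr),
\]
valid for every unit $x\in\mathfrak{m}_{0}$. From this one reads off immediately that the collinearity hypothesis of Theorem~\ref{cp}(ii)(B) holds precisely when $x^{0}=0$, which is the condition $u\perp u_{0}$ rewritten after the horizontal normalisation, and that $\rho=\sin^{2}\theta$ in that case. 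Your single-case calculation is a correct instance of this identity; what is missing is the extension to arbitrary $x$ with $x^{0}=0$, which amounts to tracking four more basis brackets of the same type.
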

\begin{proof} From above lemma, the study of conjugate points along $\gamma_{u}$ can be reduced to consider $u$ as $u = u(\theta) = \cos \theta x + \sin \theta e_{1},$ where $x= x^{0}u_{0} + \sum_{i=1,2}(x^{i}_{1}u_{i} + x^{i}_{2}v_{i})\in {\mathfrak m}_{0}$ and $x_{0}^{2} + \sum_{i,j=1,2}(x^{i}_{j})^{2} = 1.$ Put $v = \cos\theta e_{1} - \sin\theta x.$ Then, from (\ref{bracABC}), one gets
\[
[u,v] = [x,e_{1}] = -x^{1}_{1}e_{2} -x^{2}_{1}e_{4} + x^{0}f_{1} + x^{1}_{2} f_{2} + x^{2}_{2}f_{4}
\]
and
$$
\begin{array}{lcl}
[[u,v],u] & = & x^{0}(\cos\theta(x^{0}e_{1} + x^{1}_{2}e_{2} + x^{2}_{2}e_{4} + x^{1}_{1}f_{2} +x^{2}_{1}f_{4}) - \sin\theta(u_{0}+H_{1} + H_{4} + \sqrt{5}H_{11}))\\[0.4pc]
 & & + x^{1}_{1}(\cos\theta(x^{1}_{1}e_{1} + x^{2}_{1}e_{3} + x^{1}_{2}f_{1} - x^{0}f_{2} -x^{2}_{2}f_{3})-\sin\theta(u_{1} + H_{10}))\\[0.4pc]
 & & + x^{2}_{1}(\cos\theta(x^{2}_{1}e_{1} - x^{1}_{1}e_{3} + x^{2}_{2}f_{1} + x^{1}_{2}f_{3} -x^{0}f_{4}) -\sin\theta(u_{2} + H_{8}))\\[0.4pc]
 & & + x^{1}_{2}(\cos\theta(x^{1}_{2}e_{1} - x^{0}e_{2} -x^{2}_{2}e_{3} -x^{1}_{1}f_{1} - x^{2}_{1}f_{3})-\sin\theta(v_{1} + H_{7}))\\[0.4pc]
 & & + x^{2}_{2}(\cos\theta(x^{2}_{2}e_{1} + x^{1}_{2}e_{3} -x^{0}e_{4} -x^{2}_{1}f_{1} + x^{1}_{1}f_{3}) - \sin\theta(v_{2} + H_{9})).
 \end{array}
 $$
Hence, it follows
\[
[[u,v],u]_{\mathfrak m} = v,\;\;\;\;[u,[u,v]]_{\mathfrak k} = \sin\theta(x^{0}(H_{1} + H_{4} + \sqrt{5}H_{11}) + x^{1}_{1}H_{10} + x^{2}_{1}H_{8} + x^{1}_{2}H_{7} + x^{2}_{2}H_{9}).
\]
Moreover, one gets
\begin{equation}\label{fB13}
[[u,[u,v]]_{\mathfrak k},u] =\sin^{2}\theta([u,v] + 6x^{0}f_{1}).
\end{equation}
If $\gamma_{u}$ is a vertical geodesic, $\rho =0$ and $u,v$ satisfy the hypothesis of Theorem \ref{cp} for $\lambda = 1.$ So, $\gamma_{u}(p\pi),$ $p\in {\mathbb Z},$ are not strictly isotropic conjugate points. Using again Lemma \ref{ltrans2}, we can take the geodesic $\gamma_{u},$ with $u = u_{0}.$ From (\ref{bracABC}) we get
\[
[u_{0},u_{1}] = 2H_{7},\;\;\;\; [[u_{0},u_{1}],u_{0}] = 4u_{1}.
\]
Then Theorem \ref{cp} (i) yields that $\gamma_{u}(\frac{p\pi}{2}),$ $p\in {\mathbb Z},$ are isotropically conjugate points to the origin. It proves (i).

For $\theta\in ]0,\pi/2],$ if $x^{0} = 0,$ or equivalently $u$ is orthogonal to $u_{0},$ (\ref{fB13}) implies that $[[u,[u,v]]_{\mathfrak k},u]$ is collinear to $[u,v].$ Then (ii) follows using Theorem \ref{cp} (ii) (B). Finally, using Lemma \ref{lsubmersion} and taking into account Remark \ref{rsymmetric} and that the base space of the canonical homogeneous Riemannian fibration is isometric to ${\mathbb C}P^{4}(2),$ the proof of (iii) is completed.
\end{proof}

\subsection{\sc The Wilking's example $W^{7}= (SO(3)\times SU(3))/U^{\bullet}(2)$}\indent

\vspace{0.2cm}

$U^{\bullet}(2)$ denotes the image of $U(2)$ under the
embedding $(\pi,\iota): U(2)\hookrightarrow SO(3)\times SU(3),$
where $\pi$ is the projection $\pi:U(2)\to U(2)/S^{1}\cong SO(3),$
being $S^{1}\subset U(2)$ the center of $U(2),$ and $\iota:\Lie{U}(2) \to \Lie{SU}(3)$ the
natural inclusion (see Section 4.2). Using the natural isomorphism between ${\mathfrak s}{\mathfrak o}(3)$ and ${\mathfrak
s}{\mathfrak u}(2),$ we can consider the Lie algebra ${\mathfrak s}{\mathfrak o}(3)\oplus
{\mathfrak s}{\mathfrak u}(3)$ of $SO(3)\times SU(3)$ as the subalgebra of ${\mathfrak
s}{\mathfrak u}(5)$ of matrices of the form
$$
X = \left (
\begin{array}{cc}
X_{1} & 0 \\
0 & X_{2}\\
\end{array} \right )\;\;\;\; X_{1}\in {\mathfrak
s}{\mathfrak u}(2),\;X_{2}\in {\mathfrak s}{\mathfrak u}(3).
$$
Then the Lie algebra ${\mathfrak u}^{\bullet}(2)$ of $U^{\bullet}(2)$ is given by
$$
{\mathfrak u}^{\bullet}(2) = \left\{(\pi_{*}A,A) = \left (
\begin{array}{cccc}
\pi_{*}A & \vline & 0 & 0\\ \hline
 0 & \vline & A & 0\\
0 & \vline & 0 & -\trace{A} \\
\end{array}
\right ) \mid \;\;\;\;\;A\in {\mathfrak u}(2) \right\},
$$
where $\pi_{*}$ denotes the differential map of $\pi.$ It implies that ${\mathfrak u}^{\bullet}(2) \cong \Delta({\mathfrak s}{\mathfrak u}(2))\oplus \RR Z_{0},$ where  $\Delta({\mathfrak s}{\mathfrak u}(2))$ is the Lie subalgebra in ${\mathfrak u}^{\bullet}(2)$ defined by $\Delta({\mathfrak s}{\mathfrak u}(2)) = \{(X,X)\mid X\in {\mathfrak s}{\mathfrak u}(2)\}$ and $Z_{0}\cong (0,Z_{0})$ is a generator of the centralizer of ${\mathfrak s}{\mathfrak u}(2)$ in ${\mathfrak s}{\mathfrak u}(3)\cong (0,{\mathfrak s}{\mathfrak u}(3)).$ On ${\mathfrak s}{\mathfrak u}(5)$ we take the bi-invariant inner product given by $\langle X,Y\rangle = -\frac{1}{2}{\rm trace}\;XY.$ $SO(3)$ as symmetric space is isometric to $\RR P^{3}$ and it can be expressed as the quotient $SO(3)\times SO(3)/\Delta(SO(3)),$ being $\Delta(SO(3))$ the diagonal of $SO(3)\times SO(3).$ Then we get the corresponding reductive orthogonal decomposition ${\mathfrak s}{\mathfrak u}(2) \oplus {\mathfrak s}{\mathfrak u}(2) = \Delta({\mathfrak s}{\mathfrak u}(2))\oplus {\mathfrak m}_{0},$ where ${\mathfrak m}_{0} = \{(X,-X)\mid X\in {\mathfrak s}{\mathfrak u}(2)\}.$

On the other hand, the quotient expression $SU(3)/S(U(2)\times U(1))$ for ${\mathbb C}P^{2}$ determines another reductive decomposition ${\mathfrak s}{\mathfrak u}(3) = ({\mathfrak s}{\mathfrak u}(2) \oplus \RR Z_{0})\oplus {\mathfrak m}_{1},$ ${\mathfrak m}_{1}\cong {\mathbb C}^{2}.$ Then, identifying ${\mathfrak m}_{1}$ with $(0,{\mathfrak m}_{1}),$ we have $[{\mathfrak u}^{\bullet}(2), {\mathfrak m}_{1}]\subset (0, [{\mathfrak s}{\mathfrak u}(2)\oplus \RR Z_{0},{\mathfrak m}_{1}]) \subset {\mathfrak m}_{1}.$ Hence, one gets
$$
\begin{array}{lcl}
{\mathfrak s}{\mathfrak o}(3)\oplus {\mathfrak s}{\mathfrak u}(3) & \cong & {\mathfrak s}{\mathfrak u}(2)\oplus {\mathfrak s}{\mathfrak u}(3) = ({\mathfrak s}{\mathfrak u}(2)\oplus {\mathfrak s}{\mathfrak u}(2))\oplus \RR Z_{0} \oplus {\mathfrak m}_{1}\\[0.4pc]
  & = & (\Delta({\mathfrak s}{\mathfrak u}(2))\oplus \RR Z_{0}) \oplus {\mathfrak m}_{0}\oplus {\mathfrak m}_{1}\cong {\mathfrak u}^{\bullet}(2)\oplus {\mathfrak m}_{0}\oplus {\mathfrak m}_{1} .
\end{array}
$$
Then ${\mathfrak s}{\mathfrak o}(3)\oplus {\mathfrak s}{\mathfrak u}(3) = {\mathfrak u}^{\bullet}(2)\oplus {\mathfrak m},$ where ${\mathfrak m} = {\mathfrak m}_{0} \oplus {\mathfrak m}_{1},$ is a reductive decomposition associated to $(SO(3)\times SU(3))/U^{\bullet}(2).$ Since $U^{\bullet}(2)$ is connected, it follows that ${\mathfrak m}_{0}$ and ${\mathfrak m}_{1}$ are $Ad(U^{\bullet}(2))$-invariant subspaces of ${\mathfrak m}.$

Any left-invariant metric on $SO(3)\times SU(3)$ is bi-invariant and, up to a scaling factor, they are given by the one-parameter family of inner products $\langle\cdot,\cdot\rangle_{s}:= s\langle\cdot,\cdot\rangle_{{\mathfrak s}{\mathfrak o}(3)\times {\mathfrak s}{\mathfrak o}(3)} + \langle\cdot,\cdot\rangle_{{\mathfrak s}{\mathfrak u}(3)\times {\mathfrak s}{\mathfrak u}(3)},$ for some $s>0,$ on ${\mathfrak s}{\mathfrak o}(3)\oplus {\mathfrak s}{\mathfrak u}(3).$ The induced metrics $g_{s}$ on $W^{7}$ which turn the projection $SO(3)\times SU(3)\to W^{7}$ into a Riemannian submersion are normal homogeneous and they are determined by the restriction of $\langle\cdot,\cdot\rangle_{s}$ to the orthogonal complement ${\mathfrak m}_{s}$ of ${\mathfrak u}^{\bullet}(2)$ on $({\mathfrak s}{\mathfrak o}(3)\oplus {\mathfrak s}{\mathfrak u}(3),\langle\cdot,\cdot\rangle).$ It can be expressed as ${\mathfrak m}_{s} = {{\mathfrak m}_{0}}_{s} \oplus {\mathfrak m}_{1},$ where ${{\mathfrak m}_{0}}_{s} = \{(X,-sX)\mid X\in {\mathfrak s}{\mathfrak u}(2)\}.$ Here, ${{\mathfrak m}_{0}}_{s}$ is a Lie triple system with associated totally geodesic diffeomorphic to $\RR P^{3}.$

An orthonormal basis $\{K_{1},K_{2},K_{3}, K_{4}; {u_{0}}_{s},{u_{1}}_{s},{v_{1}}_{s}; e_{1},e_{2},f_{1},f_{2}\}$ of $({\mathfrak s}{\mathfrak o}(3)\oplus
{\mathfrak s}{\mathfrak u}(3), \langle\cdot,\cdot\rangle_{s})$ adapted to the reductive decomposition ${\mathfrak
u}^{\bullet}(2)\oplus ({{\mathfrak m}_{0}}_{s}\oplus {\mathfrak m}_{1})$ is given as follows:
$$
\begin{array}{lcllcllcl}
K_{1} & = & \frac{1}{\sqrt{1+s}}(A_{1,2} + A_{3,4}),  & & K_{2}  & = & \frac{1}{\sqrt{1+s}}(B_{1,2} +
B_{3,4}), \\[0.4pc]
K_{3} & = & \frac{1}{\sqrt{1+s}}(C_{1,2} + C_{3,4}), & & K_{4} & = & \frac{1}{\sqrt{3}}(A_{3,4} + 2A_{4,5}),\\[0.4pc]
 {u_{0}}_{s} & = & \frac{1}{\sqrt{s(1+s)}}(A_{1,2} - s A_{3,4}), & & {u_{1}}_{s} & = & \frac{1}{\sqrt{s(1 +s)}}(B_{1,2} -s B_{3,4}),\\[0.4pc]
{v_{1}}_{s}& = & \frac{1}{\sqrt{s(1+s)}}(C_{1,2} - s C_{3,4}), & & e_{i} & = &  B_{i+2,5},\\[0.4pc]
f_{i} & = & C_{i+2,5},\;\; i = 1,2. & & & &
\end{array}
$$

\noindent (See \cite{GDN} for the brackets $[{\mathfrak m},{\mathfrak m}].)$ Each Riemannian space $(W^{7},g_{s})$ is isometric to the Aloff-Wallach space
$(M^{7}_{11},\tilde{g}_{t}),$ for $t=-\frac{3}{2s + 3}$
\cite{Wil}, and, in similar way than in above section, we have a homogeneous Riemannian fibration $\RR P^{3}\to W^{7} \to {\mathbb C}P^{2}.$ Moreover, since for orthonormal vectors $u,v\in {{\mathfrak m}_{0}}_{s}$ one gets
\[
\|[u,v]_{{\mathfrak m}_{s}}\|^{2} = \frac{4(1-s)^{2}}{s(1+s)},\;\;\; \|[u,v]_{{\mathfrak u}^{\bullet}(2)}\|^{2} = \frac{4}{1+s},
\]
formula (\ref{curnor}) implies that the fibers as totally geodesic submanifold are isometric to $\RR P^{3}(\frac{1 +s}{s}).$
\begin{lemma}\label{ltrans3} The linear isotropy action of $U^{\bullet}(2)$ on ${\mathcal S}\cap {{\mathfrak m}_{0}}_{s}$ and on ${\mathcal S}\cap {\mathfrak m}_{1}$ is transitive.
\end{lemma}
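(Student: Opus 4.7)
The plan is to apply Proposition \ref{ptrans} to each of the two $\mathrm{Ad}(U^{\bullet}(2))$-invariant subspaces ${{\mathfrak m}_{0}}_{s}$ and ${\mathfrak m}_{1}$, exactly as was done for $(S^{2m+1},g_{\kappa,s})$ in Lemma \ref{ltran} and for $B^{13}$ in Lemma \ref{ltrans2}. That is, in each case I would exhibit a distinguished unit vector and verify that its bracket with any linearly independent vector in the subspace has nontrivial ${\mathfrak u}^{\bullet}(2)$-component. Transitivity of the isotropy action on the corresponding unit sphere then follows from Proposition \ref{ptrans}\,(iii).

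For ${\mathcal S}\cap {{\mathfrak m}_{0}}_{s}$, I would take ${u_{0}}_{s}$ as the test vector. Writing a general $x_{s}\in {{\mathfrak m}_{0}}_{s}$ in the basis as $x_{s}=x^{0}{u_{0}}_{s}+x^{1}{u_{1}}_{s}+x^{2}{v_{1}}_{s}$ and using the Lie brackets in ${\mathfrak s}{\mathfrak u}(5)$ from (\ref{bracABC}), the bracket $[{u_{0}}_{s},x_{s}]$ lies in ${\mathfrak s}{\mathfrak u}(2)\oplus {\mathfrak s}{\mathfrak u}(2)$ (since $[{\mathfrak m}_{0},{\mathfrak m}_{0}]\subset {\mathfrak s}{\mathfrak u}(2)\oplus {\mathfrak s}{\mathfrak u}(2)$), and its projection onto $\Delta({\mathfrak s}{\mathfrak u}(2))\subset {\mathfrak u}^{\bullet}(2)$ is, up to a nonzero scalar depending only on $s$, a linear combination of $K_{2}$ and $K_{3}$ whose coefficients are precisely $x^{1}$ and $x^{2}$. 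Hence $[{u_{0}}_{s},x_{s}]_{{\mathfrak u}^{\bullet}(2)}=0$ forces $x^{1}=x^{2}=0$, i.e.\ $x_{s}$ collinear with ${u_{0}}_{s}$.

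For ${\mathcal S}\cap {\mathfrak m}_{1}$, I would take $e_{1}$ as the test vector and expand an arbitrary $v=v^{1}_{1}e_{1}+v^{2}_{1}e_{2}+v^{1}_{2}f_{1}+v^{2}_{2}f_{2}\in {\mathfrak m}_{1}$. Using (\ref{bracABC}), the bracket $[e_{1},v]=[B_{3,5},v^{2}_{1}B_{4,5}+v^{1}_{2}C_{3,5}+v^{2}_{2}C_{4,5}]$ produces combinations of $B_{3,4}$, $C_{3,4}$ and $A_{3,3}-A_{5,5}$ type terms that lie in ${\mathfrak s}{\mathfrak u}(2)\oplus \mathbb{R}Z_{0}\subset {\mathfrak s}{\mathfrak u}(3)$. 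Projecting onto ${\mathfrak u}^{\bullet}(2)=\Delta({\mathfrak s}{\mathfrak u}(2))\oplus \mathbb{R}Z_{0}$ against the orthogonal complement ${{\mathfrak m}_{0}}_{s}$, one reads off that the component vanishes iff $v^{2}_{1}=v^{1}_{2}=v^{2}_{2}=0$, i.e.\ $v$ is collinear with $e_{1}$.

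The only real obstacle is the bookkeeping needed to decompose the brackets into the ${\mathfrak u}^{\bullet}(2)$- and ${\mathfrak m}_{s}$-parts relative to the twisted diagonal $\Delta({\mathfrak s}{\mathfrak u}(2))$: since $K_{1},K_{2},K_{3}$ and ${u_{0}}_{s},{u_{1}}_{s},{v_{1}}_{s}$ mix the two ${\mathfrak s}{\mathfrak u}(2)$ factors with $s$-dependent coefficients, one must separate them carefully, as was already done in Lemma \ref{ltrans2}. Once this projection is correctly computed, the argument reduces, exactly as in Lemmas \ref{ltran} and \ref{ltrans2}, to an application of Proposition \ref{ptrans}.
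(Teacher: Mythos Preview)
Your proposal is correct and follows essentially the same approach as the paper: choosing ${u_{0}}_{s}$ and $e_{1}$ as test vectors, computing the ${\mathfrak u}^{\bullet}(2)$-components of their brackets with a generic element of ${{\mathfrak m}_{0}}_{s}$ (resp.\ ${\mathfrak m}_{1}$) via (\ref{bracABC}), observing that these vanish only for collinear vectors, and invoking Proposition \ref{ptrans}. The paper simply carries out the explicit computation, obtaining $[{u_{0}}_{s},x]_{{\mathfrak u}^{\bullet}(2)}$ as a multiple of $-x^{1}_{2}K_{2}+x^{1}_{1}K_{3}$ and $[e_{1},v]_{{\mathfrak u}^{\bullet}(2)}$ as a combination of $K_{1},K_{2},K_{3},K_{4}$ with coefficients $v^{1}_{2},v^{2}_{1},v^{2}_{2}$, exactly as you outline.
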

\begin{proof} Given $x= x^{0}{u_{0}}_{s} + x^{1}_{1}{u_{1}}_{s} + x^{1}_{2}{v_{1}}_{s}\in {{\mathfrak m}_{0}}_{s}$ and $v = \sum_{r=1}^{2}(v^{r}_{1}e_{r} + v^{r}_{2}f_{r})\in {\mathfrak m}_{1},$ one gets, using (\ref{bracABC}), the following brackets:
$$
\begin{array}{lcl}
[{u_{0}}_{s},x]_{{\mathfrak u}^{\bullet}(2)} & = & \frac{2}{\sqrt{1-s}}(-x^{1}_{2}K_{2} + x^{1}_{1}K_{3}),\\[0.4pc]
[e_{1},v]_{{\mathfrak u}^{\bullet}(2)} &  =  & \frac{1}{\sqrt{1+s}}(v^{1}_{2}K_{1} - v^{2}_{1}K_{2} + v^{2}_{2}K_{3} + \sqrt{3(1+s)}v^{1}_{2}K_{4}).
\end{array}
$$
It implies that $[{u_{0}}_{s},x]_{{\mathfrak u}^{\bullet}(2)}$ or $[e_{1},v]_{{\mathfrak u}^{\bullet}(2)}$ is zero if and only if $x$ is collinear to ${u_{0}}_{s}$ or $v$ is collinear to $e_{1}.$ Then the result follows from Proposition \ref{ptrans}.
\end{proof}

\begin{theorem}\label{conjW7} Let $\gamma_{u},$ $u = u(\theta),$ be a geodesic on $(W^{7},g_{s})$ with slope angle $\theta = {\mathrm ang}(u,{{\mathfrak m}_{0}}_{s})\in [0,\frac{\pi}{2}].$ Then we have:
\begin{enumerate}
\item[{\rm (i)}] If $\gamma_{u}$ is a vertical geodesic, the points $\gamma_{u}(2\sqrt{\frac{s}{1+s}}p\pi),$ $p\in {\mathbb Z},$ are isotropic conjugate to the origin and those $\gamma_{u}(2\sqrt{\frac{1+s}{s}}p\pi),$ are not strictly isotropic;
\item[{\rm (ii)}] If $\theta\in ]0,\frac{\pi}{2}]$ and $u$ is orthogonal to ${u_{0}}_{s},$ the points of the form $\gamma_{u}(\frac{t\sqrt{1+s}}{\sqrt{s +\sin^{2}\theta}}),$ where
\begin{enumerate}
\item[{\rm (A)}] $t$ is a solution of the equation $\tan \frac{t}{2} = -\sin^{2}\theta\frac{t}{2s},$
\noindent or
\item[{\rm (B)}] $t = 2p\pi,$ $p\in {\mathbb Z},$
\end{enumerate}
are conjugate to the origin. In the first case, they are not strictly isotropic and in the second one, they are isotropic.
\item[{\rm (iii)}] If $\gamma_{u}$ is a horizontal geodesic, the points $\gamma_{u}(t)$ in {\rm (ii) A}, where $t$ is a solution of the equation $\tan \frac{t}{2} = -\frac{t}{2s},$ are conjugate to the origin but not isotropic.
\end{enumerate}
\end{theorem}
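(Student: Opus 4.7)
The strategy mirrors that of Theorem~\ref{conjB13}. Lemma~\ref{ltrans3}, together with Lemma~\ref{isotropic}, allows me to normalise the initial velocity: up to the isotropy action I may assume $u=\cos\theta\,x+\sin\theta\,e_{1}$ with $x=x^{0}{u_{0}}_{s}+x^{1}_{1}{u_{1}}_{s}+x^{1}_{2}{v_{1}}_{s}$ a unit vector in ${{\mathfrak m}_{0}}_{s}$. Introduce the auxiliary unit vector $v=\cos\theta\,e_{1}-\sin\theta\,x$ orthogonal to $u$. I plan to compute $[u,v]$, $[[u,v],u]_{\mathfrak m}$, $[u,[u,v]]_{\mathfrak k}$ and $[[u,[u,v]]_{\mathfrak k},u]$ by hand from (\ref{bracABC}) and the adapted basis $\{K_{1},K_{2},K_{3},K_{4};{u_{0}}_{s},{u_{1}}_{s},{v_{1}}_{s};e_{1},e_{2},f_{1},f_{2}\}$, and then read off the coefficients $\lambda,\rho$ needed to invoke Theorem~\ref{cp}.

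The calculation is expected to give $[u,v]=[x,e_{1}]\in{\mathfrak m}_{1}$, $[[u,v],u]_{\mathfrak m}=\lambda v$ with $\lambda=s/(1+s)$, and, paralleling formula (\ref{fB13}) from the $B^{13}$ case, a decomposition
\[
[[u,[u,v]]_{\mathfrak k},u]\;=\;\frac{\sin^{2}\theta}{1+s}\,[u,v]\;+\;\sin^{2}\theta\cdot x^{0}\cdot Z,
\]
where the spurious term $Z$ comes from the $K_{1},K_{4}$-contributions to $[u,[u,v]]_{\mathfrak k}$ that escape the plane $\mathrm{span}\{{u_{1}}_{s},{v_{1}}_{s}\}$. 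When $u$ is orthogonal to ${u_{0}}_{s}$ (i.e.\ $x^{0}=0$) this term vanishes, so Theorem~\ref{cp}(ii)(B) applies with $\rho=\sin^{2}\theta/(1+s)$, giving $\lambda+\rho=(s+\sin^{2}\theta)/(1+s)$ and the conjugate positions $t\sqrt{1+s}/\sqrt{s+\sin^{2}\theta}$ of~(ii).

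For the vertical case~(i) I set $\theta=0$ and, by transitivity, $u={u_{0}}_{s}$. Specialising to $v=e_{1}$ yields $\lambda=s/(1+s)$, $\rho=0$, so Theorem~\ref{cp}(ii)(A) produces the not-strictly-isotropic conjugate points $\gamma_{u}(2p\pi\sqrt{(1+s)/s})$. For the $G$-isotropic conjugate points $\gamma_{u}(2p\pi\sqrt{s/(1+s)})$, Theorem~\ref{cp} does not apply directly in the ${{\mathfrak m}_{0}}_{s}$-direction, because $[{u_{0}}_{s},{u_{1}}_{s}]$ has both a ${\mathfrak k}$- and a ${{\mathfrak m}_{0}}_{s}$-component once $s\neq 1$; the naturally reductive realisation of the fiber $\RR P^{3}((1+s)/s)$ by $H/K$ is non-symmetric. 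Instead I will use Lemma~\ref{inf}: a direct computation of $\tilde R_{u}$ gives $(\mathrm{Ker}\,\tilde R_{u})^{\bot}=\mathrm{span}\{{u_{1}}_{s},{v_{1}}_{s}\}$, and since $[K_{2},u],[K_{3},u]$ both lie in this plane, the Killing vectors generated by $K_{2},K_{3}\in{\mathfrak k}$ restrict along $\gamma_{u}$ to $G$-isotropic Jacobi fields; integrating (\ref{Jam1}) in the $\{{u_{1}}_{s},{v_{1}}_{s}\}$-plane then pins down their zeros.

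For the horizontal case~(iii), part~(ii)(A) with $\theta=\pi/2$ already exhibits conjugate points at the claimed transcendental positions, but only as ``not strictly isotropic''. To upgrade this to ``not isotropic'' I invoke Lemma~\ref{lsubmersion}: were any such point $G$-isotropic upstairs, its projection would be a conjugate point of the symmetric base ${\mathbb C}P^{2}$ of the Riemannian fibration $\RR P^{3}((1+s)/s)\to W^{7}\to{\mathbb C}P^{2}$, hence (by Remark~\ref{rsymmetric}) at a rational multiple of $\pi$; but the solutions of $\tan(t/2)=-t/(2s)$ in $]\pi,2\pi[$ are transcendental, which rules out any such coincidence. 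The hard part is the explicit bracket arithmetic supporting the central decomposition displayed above --- verifying that the only obstruction to collinearity of $[[u,[u,v]]_{\mathfrak k},u]$ with $[u,v]$ is precisely the $x^{0}$-term --- together with the auxiliary Killing-vector/Jacobi-equation analysis needed for the ${{\mathfrak m}_{0}}_{s}$-direction in~(i).
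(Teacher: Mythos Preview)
Your overall strategy matches the paper's: the same normalisation of $u$ via Lemma~\ref{ltrans3}, the same auxiliary vector $v=\cos\theta\,e_{1}-\sin\theta\,x$, the same bracket computations yielding $\lambda=s/(1+s)$ and (when $x^{0}=0$) $\rho=\sin^{2}\theta/(1+s)$, and the same appeal to Theorem~\ref{cp} for~(ii) and to Lemma~\ref{lsubmersion} for~(iii). The displayed decomposition you anticipate is exactly the paper's formula~(\ref{colW7}).

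Two points of divergence are worth noting. For the isotropic conjugate points in~(i), the paper does not compute $\tilde R_{u}$ and integrate~(\ref{Jam1}) as you propose; instead it simply observes that the vertical geodesic lies in the totally geodesic fibre $\RR P^{3}((1+s)/s)$, a space form whose conjugate points are, by Remark~\ref{rsymmetric}, strictly isotropic in the fibre and therefore isotropic in $W^{7}$. Your direct approach via Lemma~\ref{inf} would also work, but it is more laborious and unnecessary here.

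For~(iii), your ``transcendental'' justification is mistaken: the conjugate points of $\mathbb{C}P^{2}(1)$ sit at $t=p\pi/2$, and half-integer multiples of $\pi$ are just as transcendental as the solutions of $\tan(t/2)=-t/(2s)$, so transcendence by itself rules out nothing. The argument the paper intends (stated tersely) is purely the contrapositive of Lemma~\ref{lsubmersion}: were $\gamma_{u}(t)$ a $G$-isotropically conjugate point in $W^{7}$, then $\tilde\gamma_{u}(t)$ would be conjugate in $\mathbb{C}P^{2}(1)$, forcing $t\in\frac{\pi}{2}\mathbb{Z}$; one then checks by hand that the relevant nonzero solutions of $\tan(t/2)=-t/(2s)$ do not lie in this lattice. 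Replace the transcendence claim with that direct check.
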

\begin{proof} From the transitivity of the isotropy action on ${\mathcal S}\cap {\mathfrak m}_{1}$ proved in Lemma \ref{ltrans3}, the study of conjugate points along $\gamma_{u}$ can be reduced to consider $u$ as $u = u(\theta) = \cos \theta x + \sin \theta e_{1},$ where $x = x^{0}{u_{0}}_{s} + x^{1}_{1}{u_{1}}_{s} + x^{1}_{2}{v_{1}}_{s}\in {{\mathfrak m}_{0}}_{s},$ $(x^{0})^{2} + (x^{1}_{1})^{2} + (x^{1}_{2})^{2}= 1.$ Put $v = \cos\theta e_{1} - \sin\theta x.$ From (\ref{bracABC}), one gets
\[
[u,v] = [x,e_{1}] = \sqrt{\frac {s}{1+s}}(x^{1}_{1}e_{2} - x^{0}f_{1} - x^{1}_{2}f_{2})
\]
and
\[
[[u,v],u] = \frac{s}{1+s}(\cos\theta e_{1}-\sin\theta x) + \frac{\sqrt{s}}{1+s}\sin\theta (x^{0}(K_{1} + \sqrt{3(1+s)}K_{4}) + x^{1}_{1}K_{2} + x^{1}_{2}K_{3}).
\]
Hence, $[[u,v],u]_{{\mathfrak m}_{s}} = \frac{s}{1+s}v$ and $u$ and $v$ satisfy the conditions of Theorem \ref{cp} for $\lambda = \frac{s}{1+s}$ and $\rho = \frac{1}{1+s}\sin^{2}\theta(1 + 3(1+s)(x^{0})^{2}).$ Moreover, one gets
\begin{equation}\label{colW7}
[[u,[u,v]]_{{\mathfrak u}^{\bullet}(2)},u] = \frac{\sin^{2}\theta}{1+s}([u,v]- 3\sqrt{s(1+s)}x^{0}f_{1}).
\end{equation}
If $\gamma_{u}$ is vertical then $\rho = 0$ and so the points $\gamma_{u}(2\sqrt{\frac{1+s}{s}}p\pi),$ $p\in {\mathbb Z},$ are non-strictly isotropic conjugate points to the origin. Since in $\RR P^{3}(\frac{1+s}{s}),$ $\gamma_{u}(2\sqrt{\frac{s}{1+s}}p\pi)$ are (strictly) isotropic conjugate points to the origin, they also are isotropic in $(W^{7},g_{s}).$ It proves (i).

For $\theta\in ]0,\pi/2],$ if $u$ is orthogonal to ${u_{0}}_{s},$ one gets from (\ref{colW7}) that $[[u,[u,v]]_{{\mathfrak u}^{\bullet}(2)},u]$ is collinear to $[u,v]$ and $\rho = \frac{\sin^{2}\theta}{1 + s}.$ Then (ii) and (iii) follow using Theorem \ref{cp} (ii) (B) and Lemma \ref{lsubmersion} together with the fact that the base space of the canonical homogeneous Riemannian fibration is isometric to ${\mathbb C}P^{2}(1).$ \end{proof}

\noindent {\em Proof of Theorem \ref{tmean2}.} It follows using Propositions \ref{mean1} and \ref{pmean2} and Lemma \ref{lB13}.

\noindent {\em Proof of the Chavel's conjecture.} From Theorems \ref{conj}, \ref{conjB13} and \ref{conjW7}, one obtains, for the cases (ii)-(iv), (vi) and (vii) in Theorem \ref{tmean2}, the existence of non-isotropically conjugate points to the origin along any horizontal geodesic. Moreover, for the  Berger space $B^{7},$ I. Chavel \cite{Ch} finds non-isotropic conjugate points to the origin.

\end{document}